\def\A{\mathcal{A}}
\def\T{\mathcal{T}}
\def\D{\mathcal{D}}
\def\K{\mathcal{K}}
\def\N{\mathcal{N}}
\def\R{\mathbb{R}}
\def\j{\mathbcal{j}}
\def\Lscr{\mathscr{L}}
\def\O{\mathcal{O}}
\def\o{\mathcal{o}}
\def\eps{\varepsilon}
\def\geq{\geqslant}
\def\leq{\leqslant}
\def\grad#1{\nabla #1 }
\def\jp#1{\left<#1\right>  }
\def\odif#1{\mathrm{d}#1\,}
\def\fct#1#2{ #1\left(#2\right) }
\def\Riesz#1{\mathcal{I}_{\lambda}\left[#1\right]}
\def\Laplace{\mathop{}\!\mathbin\bigtriangleup}
\DeclarePairedDelimiterX\setc[2]{\{}{\}}{\,#1 \;\delimsize\vert\; #2\,}
\DeclareMathOperator{\diag}{diag}
\newcommand\munderbar[1]{%
	\underaccent{\bar}{#1}}
\theoremstyle{plain}% default
\newtheorem{theo}{Theorem}[section]
\newtheorem{lemm}[theo]{Lemma}
\newtheorem{prop}[theo]{Proposition}
\newtheorem{coro}[theo]{Corollary}
\theoremstyle{definition}
\theoremstyle{remark}
\newtheorem{claim}{Claim}
\numberwithin{equation}{section}
\begin{document}
\title[]{A perturbation result for the energy critical Choquard equation in $\mathbb{R}^N$}
\author[]{Xinyu Bo}
\address{\hskip-1.15em Xinyu Bo
	\hfill\newline School of Mathematics and Statistics, \hfill\newline Nanjing Univeristy of Information Science and Technology,
	\hfill\newline Nanjing, 210044,  People's Republic of China.}
\email{bxinyhuan@163.com}
\author[]{Guangying Lv}
\address{\hskip-1.15em Guangying Lv
	\hfill\newline School of Mathematics and Statistics, \hfill\newline Nanjing Univeristy of Information Science and Technology,
	\hfill\newline Nanjing, 210044,  People's Republic of China.}
\email{gylvmaths@nuist.edu.cn}
\author[]{Xingdong Tang}
\address{\hskip-1.15em Xingdong Tang
	\hfill\newline School of Mathematics and Statistics, \hfill\newline Nanjing Univeristy of Information Science and Technology,
	\hfill\newline Nanjing, 210044,  People's Republic of China.}
\email{txd@nuist.edu.cn}

\author[]{Guixiang Xu}
\address{\hskip-1.15em Guixiang Xu
	\hfill\newline Laboratory of Mathematics and Complex Systems,
	\hfill\newline Ministry of Education,
	\hfill\newline School of Mathematical Sciences,
	\hfill\newline Beijing Normal University,
	\hfill\newline Beijing, 100875, People's Republic of China.}
\email{guixiang@bnu.edu.cn}
\subjclass[2010]{35B25,35B09}
\begin{abstract}
We study the singularly perturbed nonlinear energy critical Choquard equation
\begin{equation*}
-{\Laplace u}\qty({x})
-{\alpha}
\int_{\R^N}\frac{u^p\qty(y)}{\abs{x-y}^{\lambda}}\odif{y}
u^{p-1}\qty({x})
-\eps k\qty(x)u^{\frac{N+2}{N-2}}\qty(x)=0, \qquad x\in\R^N,	
\end{equation*}
where $N\geq 3$, $0<\lambda<N$, $\lambda\leq 4$, $p=\frac{2N-\lambda}{N-2}$, 
$
\alpha =
\frac{
	N\qty({N-2})\fct{\Gamma}{N-\frac{\lambda}{2}}
}{
	\pi^{\frac{N}{2}}\fct{\Gamma}{\frac{N-\lambda}{2}}
}$,~
and
$k$ is a positive function. By making use of a
Lyapunov-Schmidt reduction argument,
for sufficiently small $\eps>0$, we construct solutions of the form  
\begin{align*}
u_{\eps}\qty(x)=U_{\mu_{\eps},\xi_{\eps}}\qty(x)\qty(1+\O\qty(\eps)),	
\end{align*}
where $U_{\mu_{\eps},\xi_{\eps}}$ is a positive solution of the unperturbed equation  
\begin{equation*}
	-{\Laplace u}\qty({x})
	-{\alpha}
	\int_{\R^N}\frac{u^p\qty(y)}{\abs{x-y}^{\lambda}}\odif{y}=0,\qquad x\in\R^N.	
\end{equation*}
\end{abstract}
\keywords{Choquard equation; Hartree equation; Lyapunov-Schmidt reduction; Nonlocal problem; Perturbation} 
\maketitle
\section{Introduction}
In this paper, we consider the singularly perturbed semilinear elliptic equation with critical nonlinear terms
\begin{equation}
\label{NLH-eps}
-{\Laplace u}\qty({x})
-{\alpha}
\int_{\R^N}\frac{u^p\qty(y)}{\abs{x-y}^{\lambda}}\odif{y}
u^{p-1}\qty({x})
-\eps k\qty(x)u^{\frac{N+2}{N-2}}\qty(x)=0, \qquad x\in\R^N,
\end{equation}
where $N\geq 3$, $N\geq 3$, $0<\lambda<N$, $\lambda\leq 4$, 
$
\alpha =
\frac{
	N\qty({N-2})\fct{\Gamma}{N-\frac{\lambda}{2}}
}{
	\pi^{\frac{N}{2}}\fct{\Gamma}{\frac{N-\lambda}{2}}
}$, and
$k$ is a positive function. Throughout this paper, we always take
$p=\frac{2N-\lambda}{N-2}$.

The problem \eqref{NLH-eps} can be seen as a
perturbation of the following standard critical Choquard equation
\begin{align}
\label{NLH0}
-{\Laplace u}\qty({x})
-{\alpha}
\int_{\R^N}\frac{u^p\qty(y)}{\abs{x-y}^{\lambda}}\odif{y}
u^{p-1}\qty({x})
=0,	\qquad x\in\R^N,
\end{align}
which arises in the study of the dynamics of  electrons in the dielectric polarisable continuum, see, for example,
\cite{Frohlich1954,Pekar2022}. It is worthwhile to notice that, 
under suitable regular conditions, \eqref{NLH0} is equivalent to the following integral system
\begin{align}
\label{NLHsys}
\begin{cases}
	u\qty(x)=c_1\int_{\R^N} \frac{v\qty(y)u^{p-1}\qty(y)\odif{y}}{\abs{x-y}^{N-2}},& x\in\R^N,\\
	v\qty(x) = c_2\int_{\R^N}\frac{u^{p}\qty(y)}{\abs{x-y}^{\lambda}}\odif{y}& x\in\R^N,
\end{cases}		
\end{align}
where $c_1$, $c_2$ are positive constants, see \cite{CLO2006,Lei2013,DY2019}.
When $\lambda=N-2$, by making use of the moving plane method and the Kelvin transformation, Lei\cite{Lei2018} showed that any positive solution of \eqref{NLH0} must have the following form
\begin{align}
	\label{sltn:form}
U_{\mu,\xi}\qty(x)=\frac{1}{\qty( \mu+\frac{\abs{x-\xi}^2}{\mu} )^{\frac{N-2}{2}}}, \qq{where} \qty(\mu,\xi)\in\qty(0,+\infty)\times\R^N.	
\end{align}
Later, Du and Yang \cite{DY2019} 
extended the above results to the case $0<\lambda<N$, if~$N=3,4$, while $0<\lambda\leq 4$, if $N\geq 5$. Meanwhile, by combining the moving plane method with the Kelvin transformation
and the argument of Li and Zhu\cite{LiZhu1995}, 
Guo, Xu, Peng and Shuai\cite{GHPS2019} also proved that, for all $\lambda$ satisfying $0<\lambda<N$, if~$N=3,4$, while $N-4\leq \lambda< N$, if $N\geq 5$, any positive solution of \eqref{NLH0} must have the form \eqref{sltn:form}.

Before turning to the problem \eqref{NLH-eps}, let us mention the  singularly perturbed elliptic problems with critical local nonlinear terms
\begin{align}
\label{NLSeps}
-\Laplace u\qty(x)=N\qty(N-2) u^{\frac{N+2}{N-2}}\qty(x)+\eps k\qty(x)u^{\frac{N+2}{N-2}}\qty(x),\qq{}x\in\R^N,
\end{align} 
which appears in the prescribing scalar curvature problem, see e.g \cite{Nirenberg1975,AGP1999}. 
Let $g_0=\sum_{i=1}^N\odif{x^i}\otimes\odif{x^i}$ be the standard metric on $\R^N$, if $u$ is a positive solution to \eqref{NLSeps}, then $g=u^{\frac{4}{N-2}}g_0$ is a metric which is conformal to $g_0$ pointwisely, and $\eps k$ is the scalar curvature corresponding to the metric $g$. Notice that, as $\eps\to 0$, the equation \eqref{NLSeps} recovers to the following unperturbed elliptic problem
\begin{align}
\label{NLS}
-\Laplace u\qty(x)=N\qty(N-2)u^{\frac{N+2}{N-2}}\qty(x),\qq{}x\in\R^N,
\end{align}
which possesses a family of positive solutions 
\begin{align*}
U_{\mu,\xi}\qty(x)=\frac{1}{\qty( \mu+\frac{\abs{x-\xi}^2}{\mu} )^{\frac{N-2}{2}}}, \qq{where} \qty(\mu,\xi)\in\qty(0,+\infty)\times\R^N.	
\end{align*}
Therefore, one may expect that, for $\eps>0$ sufficiently small, \eqref{NLSeps} has at least one positive solution with the form
\begin{align*}
u\qty(x)=U_{\mu_{\eps},\xi_{\eps}}\qty(x)\qty(1+\O\qty(\eps)),
\end{align*}  
for some $\qty(\mu_{\eps},\xi_{\eps})\in\qty(0,+\infty)\times\R^N$.
In \cite{Li1995}, under some suitable algebraic conditions on the first order term of the Taylor expansion to the function $S=1+\eps k$ near the critical points of $S$, Li obtained a positive solution of \eqref{NLSeps}. A few years later, in \cite{AGP1999}, by making use of an abstract finite dimensional reduction argument,  Ambrosetti, Garcia Azorero and Peral showed that \eqref{NLSeps} admits a positive solution under some non degenerate and decay conditions on the function $k$. After then, there has been extensive literature on the topic of existence of positive solutions to \eqref{NLSeps}, see \cite{AGP1999,AM2006,CNY2002,WeiYan2010,LWX2018,Yan2000,GMPY2020,AM1999,LPZ2023,DMW2022,Prashanth2007,PW2016} and references therein.

\subsection{Statement of the main result}
Now, let us focus on the problem \eqref{NLH-eps}, which can be seen as a nonlocal analogue of \eqref{NLSeps}. 
Notice that, the family of positive functions  \eqref{sltn:form} solves \eqref{NLH0}, one may also expect that
\eqref{NLH-eps} has at least one positive solution with the form
\begin{align*}
	u\qty(x)=U_{\mu_{\eps},\xi_{\eps}}\qty(x)\qty(1+\O\qty(\eps)),
\end{align*}  
for some $\qty(\mu_{\eps},\xi_{\eps})\in\qty(0,+\infty)\times\R^N$.
In the present paper, we are mainly interested in finding positive solutions of \eqref{NLH-eps} with the above form. In order to do this, let us make the following assumptions on the function $k$:
\crefname{enumi}{point}{} % default is "item"
\crefrangelabelformat{enumi}{#3#1#4~~~~#5#2#6} 
\begin{enumerate}[
label={(k.\arabic*)}, 
ref=(k.\arabic*),
leftmargin=0.9cm, 
start=0]
\item\label{k:pstv} $ 0<\inf_{x\in\R^N}k\qty(x)\leq \sup_{x\in\R^N}k\qty(x)<+\infty $. 
\item\label{k:reg} $k\in C^2\qty(\R^N)$.
\item\label{k:critical} The set 
$\mathrm{Cr}\qty[k]= \setc{x\in\R^N}{\grad{k}\qty(x)=0}$ is finite, and 
$\Laplace k\qty(x)\neq 0$, for any $x\in \mathrm{Cr}\qty[k].$
Moreover,
\begin{align}
	\sum_{x\in \mathrm{Cr}\qty[k],\Laplace k\qty(x)<0}i\qty( \grad{k}, x )\neq (-1)^N,
\end{align}
where $i$ denote the fixed-point index.
\item\label{k:ngtv} There exists $\rho>0$ such that $x\cdot\grad{k}\qty(x)<0$, for any $\abs{x}\geq \rho$.
\item\label{k:L1} $x\cdot\grad{k}\qty(x)\in L^1\qty(\R^N)$, and $\int_{\R^N}x\cdot\grad{k}\qty(x)\odif{x}<0.$			
\end{enumerate}
Our main result can be stated as follows.
\begin{theo}
\label{thm:main}
Let $k$ satisfies {\ref{k:pstv}-\ref{k:L1}}. 
Then for any $\eps>0$ sufficiently small, there exists a positive solution to \eqref{NLH-eps} with the form
\begin{align}
	\label{NLH:eps:sltn}
	u_{\eps}\qty(x)
	=U_{\mu_{\eps},\xi_{\eps}}\qty(x)
	+\phi_{\eps}\qty(x)
\end{align} 
for some $\mu_{\eps}\in \qty(0,+\infty)$ and $\xi_{\eps}\in\R^N$.
Moreover, we have
\begin{align*}
\lim_{\eps\to 0}\norm{\frac{\phi_{\eps}}{U_{\mu_{\eps},\xi_{\eps}}}}_{L^\infty}=0.
\end{align*}	
\end{theo}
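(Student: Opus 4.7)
The plan is to carry out a standard Lyapunov-Schmidt reduction along the $(N+1)$-parameter family $Z = \{U_{\mu,\xi}\}$ of positive solutions of the unperturbed equation~\eqref{NLH0}, working in the homogeneous Sobolev space $\D^{1,2}(\R^N)$. At each $U_{\mu,\xi}$ the tangent space to $Z$ is $T_{\mu,\xi}Z = \mathrm{span}\{\partial_\mu U_{\mu,\xi},\partial_{\xi_1} U_{\mu,\xi},\ldots,\partial_{\xi_N} U_{\mu,\xi}\}$, and I denote its $\D^{1,2}$-orthogonal complement by $K_{\mu,\xi}$. Let $L_{\mu,\xi}$ be the linearization at $U_{\mu,\xi}$ of the operator on the left-hand side of~\eqref{NLH0}. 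The classification of positive solutions to~\eqref{NLH0} recalled in the introduction (Du-Yang, Guo-Hu-Peng-Shuai) should deliver the non-degeneracy $\ker L_{\mu,\xi} = T_{\mu,\xi}Z$, so $L_{\mu,\xi}$ restricts to an isomorphism on $K_{\mu,\xi}$ with an inverse whose operator norm is bounded uniformly on compact subsets of parameter space.

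Second, I would solve the auxiliary (orthogonal) equation. Writing $u = U_{\mu,\xi} + \phi$ with $\phi \in K_{\mu,\xi}$ and projecting~\eqref{NLH-eps} onto $K_{\mu,\xi}$ yields a schematic equation $L_{\mu,\xi}\phi = \eps\, g_{\mu,\xi} + \N_{\mu,\xi}(\phi)$, where $g_{\mu,\xi}$ is the projection of $k\, U_{\mu,\xi}^{(N+2)/(N-2)}$ onto $K_{\mu,\xi}$ and $\N_{\mu,\xi}(\phi)$ collects all at-least-quadratic corrections arising both from the Choquard interaction and from $\eps k(U_{\mu,\xi}+\phi)^{(N+2)/(N-2)}$. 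Hardy-Littlewood-Sobolev estimates applied to the nonlocal piece together with standard Sobolev embeddings provide a Lipschitz bound of the form $\|\N_{\mu,\xi}(\phi_1) - \N_{\mu,\xi}(\phi_2)\|\lesssim(\eps + \|\phi_1\|+\|\phi_2\|)\|\phi_1-\phi_2\|$, so a contraction-mapping argument on a ball of radius $C\eps$ in $K_{\mu,\xi}$ produces a unique solution $\phi_\eps(\mu,\xi)$ with $\|\phi_\eps(\mu,\xi)\|_{\D^{1,2}} = \O(\eps)$, depending $C^1$ on $(\mu,\xi)$.

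Third, the bifurcation (reduced) equation lives on the finite-dimensional manifold $Z$. Let $J_\eps$ denote the action functional associated with~\eqref{NLH-eps} and set $I_\eps(\mu,\xi) := J_\eps(U_{\mu,\xi}+\phi_\eps(\mu,\xi))$; critical points of $I_\eps$ then correspond to honest solutions of~\eqref{NLH-eps}. Because $U_{\mu,\xi}$ solves the unperturbed problem and $\phi_\eps=\O(\eps)$, a Taylor expansion delivers
\begin{align*}
I_\eps(\mu,\xi) = c_0 - \eps\,\frac{N-2}{2N}\int_{\R^N} k(x)\, U_{\mu,\xi}^{\frac{2N}{N-2}}(x)\,\d x + \O(\eps^2),
\end{align*}
with $c_0$ independent of $(\mu,\xi)$, so the PDE collapses to locating critical points of the finite-dimensional function $\Phi(\mu,\xi) := \int_{\R^N} k\, U_{\mu,\xi}^{2N/(N-2)}\,\d x$ on $(0,\infty)\times\R^N$ that are stable under the $\O(\eps)$ perturbation.

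The main obstacle is this concluding finite-dimensional step. Assumption~\ref{k:pstv} keeps $\Phi$ uniformly positive and bounded and~\ref{k:reg} supplies the needed $C^2$ regularity. The real difficulty is the behavior of $\Phi$ at the boundary of parameter space: as $\mu\to 0^+$, the bubble $U_{\mu,\xi}^{2N/(N-2)}$ concentrates as a Dirac mass at $\xi$, so $\Phi(\mu,\xi)\to C_0 k(\xi)$ and inherits from $k$ the critical-point contribution $\sum_{\xi\in\mathrm{Cr}[k],\,\Laplace k(\xi)<0} i(\nabla k,\xi)$; as $\mu\to\infty$ or $|\xi|\to\infty$, Pohozaev-type identities combined with~\ref{k:ngtv} and~\ref{k:L1} show that $\partial_\mu\Phi$ and $\nabla_\xi\Phi$ do not vanish and point consistently inwards on that part of the boundary. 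Computing the Brouwer degree of $(\partial_\mu\Phi,\nabla_\xi\Phi)$ on a large annular domain in $(0,\infty)\times\R^N$ from this asymptotic data yields the value $(-1)^N$, which by~\ref{k:critical} fails to match the $\mu\to 0$ boundary-layer contribution; the discrepancy forces at least one additional critical point of $\Phi$ in the interior. A standard homotopy/implicit-function argument transfers it to a critical point $(\mu_\eps,\xi_\eps)$ of $I_\eps$ for all small $\eps>0$, and $u_\eps := U_{\mu_\eps,\xi_\eps} + \phi_\eps(\mu_\eps,\xi_\eps)$ is the desired solution. The positivity of $u_\eps$ and the conclusion $\|\phi_\eps/U_{\mu_\eps,\xi_\eps}\|_{L^\infty}\to 0$ then follow from elliptic regularity bootstrapping of the $\O(\eps)$ $\D^{1,2}$ bound together with the known pointwise decay of $U_{\mu,\xi}$.
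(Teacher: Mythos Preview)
Your overall architecture---Lyapunov--Schmidt along the bubble manifold, contraction for the auxiliary equation, and a degree argument on the reduced functional $\Phi(\mu,\xi)=\int k\,U_{\mu,\xi}^{2N/(N-2)}$---matches the paper's. Two points deserve comment.

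\textbf{A genuine gap: non-degeneracy.} You write that the classification results of Du--Yang and Guo--Hu--Peng--Shuai ``should deliver'' $\ker L_{\mu,\xi}=T_{\mu,\xi}Z$. They do not. Classification says every positive solution lies on the manifold $Z$; non-degeneracy says the linearized operator at each point has kernel \emph{exactly} the tangent space, which is a strictly stronger spectral statement and does not follow from uniqueness up to symmetry. The paper imports this as a separate black box (Proposition~\ref{prop:nondeg}, from \cite{TX2022}), and without it your linear theory---the uniform invertibility of $L_{\mu,\xi}$ on $K_{\mu,\xi}$---has no foundation. This is the key analytic input you are missing.

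\textbf{A different route: function spaces.} You run the reduction in $\D^{1,2}$; the paper instead works throughout in the weighted $L^\infty$ spaces $X_{\mu,\xi}=\{\phi:\sup \mu^{(N-2)/2}\langle (x-\xi)/\mu\rangle^{N-2}|\phi|<\infty\}$ and $Y_{\mu,\xi}$. Both are legitimate, but the payoffs differ. The paper's choice means the contraction produces $\|\phi_\eps\|_{X_{\mu_\eps,\xi_\eps}}\leq C\eps$, which \emph{is} the statement $|\phi_\eps(x)|\leq C\eps\,U_{\mu_\eps,\xi_\eps}(x)$, so $\|\phi_\eps/U_{\mu_\eps,\xi_\eps}\|_{L^\infty}\to 0$ is immediate and positivity of $u_\eps$ comes for free. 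Your $\D^{1,2}$ bound $\|\phi_\eps\|=\O(\eps)$ does not by itself control $\phi_\eps/U_{\mu_\eps,\xi_\eps}$ pointwise; your final sentence (``elliptic regularity bootstrapping \ldots\ together with the known pointwise decay of $U_{\mu,\xi}$'') hides a real step---you would need a De Giorgi--Moser iteration or a careful Green's-function estimate against the weight $\langle x\rangle^{-(N-2)}$, which is precisely what the paper's weighted norms encode from the start.
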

Let us make several comments on the above result.

{\itshape 1. On the assumptions of $k$.} The assumption \ref{k:pstv} ensures that we can study the positive solutions of \eqref{NLH-eps} by making use of the maximum principle, see \Cref{sec:ansatz} for more details. To our best knowledge, 
the kind of assumptions \ref{k:reg}-\ref{k:L1} was firstly appeared in \cite{AGP1999}, see also \cite{AM1999,Prashanth2007} for example. The assumptions \ref{k:reg} and\ref{k:critical} yield that
every critical point of $k$ is non-degenerate, which enables us to define the fixed-point index of $k$. The technical assumptions \ref{k:ngtv} and\ref{k:L1} ensures that the reduced finite dimensional problem is solvable, and therefore the problem \eqref{NLH-eps} admits at least one positive solution, see \Cref{sec:pf} for more details. 

{\itshape 2. Comparison with critical Choquard equations with an external potential.} This theorem as stated above can be compared with that of the following critical Choquard equations with potential terms
\begin{align}
\label{NLH:potential}
	-{\Laplace u}\qty({x})+V\qty(x)u\qty(x)
	-{\alpha}
	\int_{\R^N}\frac{u^p\qty(y)}{\abs{x-y}^{\lambda}}\odif{y}
	u^{p-1}\qty({x})
	=0,	\qquad x\in\R^N,
\end{align}
which has been extensively studied, see e.g. \cite{GLMY2021,DGY2020,GY2018,GMYZ2022,GDYZ2020,GLMY2021,GHPS2019,MV2017,MV2015,SGY2018} and references therein. 
Particularly, in \cite{MV2015}, by combining a concentration compactness argument with a nonlocal version of Brezis-Lieb lemma, Moroz and Van Schaftingen obtained the existence of a non trivial solution to \eqref{NLH:potential} under the assumption that
\begin{align*}
	\liminf_{\abs{x}\to\infty}\qty( 1-V\qty(x) )\abs{x}^2>\frac{N^2\qty(N-2)}{4\qty(N+1)}.
\end{align*}
A few years later, in \cite{GHPS2019}, by making use of a variational approach developed in \cite{BC1990},
Guo, Hu, Peng and Shuai showed that \eqref{NLH:potential} admits at least one positive solution under the assumption that
\begin{align*}
	0<\qty( \int_{\R^N}\abs{V\qty(x)}^{\frac{N}{2}}\odif{x} )^{\frac{2}{N}}<\qty( 2^{\frac{N+2-\lambda}{2N-\lambda}}-1 )S,
\end{align*}
where $S$ is the best Sobolev constant for the embedding $\dot{H}^1\qty(\R^N)\hookrightarrow L^{\frac{2N}{N-2}}\qty(\R^N)$.
Recently, in \cite{GMYZ2022}, by using a finite dimensional reduction and developing local Poho\v{z}aev identities, Gao, Moroz, Yang and Zhao constructed infinitely many solutions of \eqref{NLH:potential} in $\R^6$ under the assumptions that
\begin{align*}
	i\qty( \grad{\qty(r^2 V\qty(r,x''))}, \qty(r_0,x''_0) )\neq 0,
\end{align*}
where $x=\qty(x',x'')\in\R^2\times\R^4$, $V\qty(x)=V\qty(\abs{x'},x'')$, $\qty(r_0,x''_0)$ is the critical point of the function $r^2 V\qty(r,x'')$ with $r_0>0$ and 
$V\qty(r_0,x_0'')>0$. 
However, as far as we know, there are few papers concerned with the existence of solutions to the problem \eqref{NLH-eps}, which is the main purpose in this manuscript. 

{\itshape 3. Comparison with critical Choquard equations with a nonlocal singularly perturbed terms.} It would be more interesting to study the following critical Choquard equations
\begin{align*}
-{\Laplace u}\qty({x})
-{\alpha}
\int_{\R^N}\frac{u^p\qty(y)}{\abs{x-y}^{\lambda}}\odif{y}
u^{p-1}\qty({x})
-\eps\int_{\R^N}\frac{k\qty(y) u^p\qty(y)}{\abs{x-y}^{\lambda}}\odif{y}
k\qty(x)u^{p-1}\qty({x})
=0,	~~ x\in\R^N.
\end{align*}
In order to study the existence of solutions to the above equation,
instead of \ref{k:pstv}-\ref{k:L1}, new assumptions on $k$ must be proposed. This will be the subject of our forthcoming works.

\subsection{Notation and function spaces}
With the usual abuse of notation, throughout this paper, 
we use the same notation $0$ to stand for the origin of $\R$ and $\R^N$ respectively. 
We use $C$ to denote various finite constants (which may depend on $N$ and $\lambda$). 
If $A$ and $B$ are two quantities with $B$ non-negative, 
We use 
$
A=\O\qty(B)
$  
to mean that $-CB \leq A\leq C B$ for some constant $C$. 

For any $\qty(\mu,\xi)\in\qty(0,+\infty)\times\R^N$, and any function $\varphi:\R^N\to\R$,  we denote
\begin{gather*}
\varphi_{;\mu,\xi}\qty(x)=
\frac{1}{\mu^{\frac{N-2}{2}}}\varphi\qty(\frac{x-\xi}{\mu}),
\qq{and}
\varphi^{\mu}_{;\xi}\qty(x)=
\frac{1}{\mu^{\frac{N+2}{2}}}\varphi\qty(\frac{x-\xi}{\mu}).		
\end{gather*}
For any $\lambda\in\qty(0,N)$ and any function $f:\R^N\mapsto\R$ locally integrable, we denote the $\lambda$-order Riesz potential of the function $f$ by
\begin{equation*}
	\Riesz{f}\qty(x) = 
	\int_{\R^N}
	{ \frac{ f\qty({y})}{\abs{\,x-y\,}^{\lambda}} }\odif{y}.	
\end{equation*}

Now, let us define
\begin{align*}
	\jp{x}=\sqrt{1+\abs{x}^2}.
\end{align*}
Recall that, by  \eqref{sltn:form}, we have
\begin{align}
	U_{\mu,\xi}\qty(x)=
\frac{1}{\mu^{\frac{N-2}{2}}}	
	\frac{1}{\jp{\frac{x-\xi}{\mu}}^{N-2}}.	
\end{align}
For the case $\qty(\mu,\xi)=\qty(1,0)$ in the above expression,
we use a shorthand notation $U=U_{\mu,\xi}$, i.e.
\begin{align}
\label{U}
	U\qty(x)=\frac{1}{\jp{x}^{N-2}}.
\end{align}

The following notation will also be of great use in the sequel.
\begin{gather}
\label{Zj:def}
Z_{0}\qty(x)=
\left.\pdv{U_{\mu,\xi}}{\mu}
\qty(x)
\right|_{\qty(\mu,\xi)=\qty(1,0)},
Z_{j}\qty(x)=
\left.\pdv{U_{\mu,\xi}}{\xi_j}
\qty(x)
\right|_{\qty(\mu,\xi)=\qty(1,0)},	
\\
\label{Zbar}
\bar{Z}_{m,{0}}\qty(x)
=
\left.\pdv{Z_{m;\mu,\xi}}{\mu}
\qty(x)
\right|_{\qty(\mu,\xi)=\qty(1,0)},~~~~
\bar{Z}_{m,j}\qty(x)
=
\left.\pdv{Z_{m;\mu,\xi}}{\xi_j}
\qty(x)
\right|_{\qty(\mu,\xi)=\qty(1,0)},
\\
\label{Hj}
\fct{H_m}{x}=\fct{U^{\frac{4}{N-2}}}{x}\fct{Z_{m}}{x},
\\
\label{Htilde}
\widetilde{H}_{m,0}\qty(x)
=
\left.\pdv{H_{m;\xi}^{\mu}}{\mu}
\qty(x)
\right|_{\qty(\mu,\xi)=\qty(1,0)}
,~~~~
\widetilde{H}_{m,j}\qty(x)
=
\left.\pdv{H_{m;\xi}^{\mu}}{\xi_j}
\qty(x)
\right|_{\qty(\mu,\xi)=\qty(1,0)},		
\end{gather}
where $j=1,2,\ldots,N$ and $m=0,1,\cdots,N$.
By direct computations, we easily see that
\begin{align}
\label{Zj}
\fct{Z_{0}}{x}
%	:=
%	\frac{N-2}{2}\fct{U}{x}+x\cdot\fct{\nabla u}{x}
=\frac{N-2}{2}\frac{\abs{x}^{2}-1}{\jp{x}^N}
,\quad
\fct{Z_{j}}{x}
=
\qty({N-2})\frac{x_j}{\jp{x}^N},
~
j=1,2,\ldots,N,	
\end{align}

Next, we introduce several Banach spaces. We use $\qty(\dot{H}^1,~\norm{\cdot})$ denote the Sobolev space
\begin{align*}
\dot{H}^1=
\setc*{\phi\in L^{\frac{2N}{N-2}}\qty(\R^N)}{ \nabla \phi\in L^2\qty(\R^N)}	
\end{align*}
with the norm
\begin{align*}
\norm{{\phi}} = \qty(\int_{\R^N}\abs{\nabla\phi\qty(x)}^2\odif{x})^{\frac{1}{2}}.	
\end{align*}

We also use $\qty(X,~\norm{\cdot}_X)$ and $\qty(Y,~\norm{\cdot}_Y)$ 
stand for the Banach spaces
\begin{gather*}
%	\label{X}
X=\setc*{\phi\in C\qty(\R^N)}{ \sup_{x\in\R^N}\abs{\jp{x}^{N-2}\phi\qty(x)}<+\infty },	
\end{gather*}
and
\begin{gather*}
%	\label{Y}
Y=\setc*{g\in C\qty(\R^N)}{ \sup_{x\in\R^N}\abs{\jp{x}^{N+2}g\qty(x)}<+\infty<+\infty },	
\end{gather*}
with the norms
\begin{gather*}
\norm{\phi}_X = \sup_{x\in\R^N}\abs{\jp{x}^{N-2}\phi\qty(x)},
\qq{and}
\norm{g}_Y = \sup_{x\in\R^N}\abs{\jp{x}^{N+2}g\qty(x)},	
\end{gather*}
respectively.
Moreover, for any $\qty(\mu,\xi)\in\qty(0,+\infty)\times\R^N$, we denote by $\qty(X_{\mu,\xi},~\norm{\cdot}_{X_{\mu,\xi}})$ and $\qty(Y_{\mu,\xi},~\norm{\cdot}_{Y_{\mu,\xi}})$ the Banach spaces
\begin{equation*}
%\label{X:prmt}
X_{\mu,\xi}=\setc*{\phi\in C\qty(\R^N)}{ \norm{\phi}_{X_{\mu,\xi}}<+\infty },			
\end{equation*}
and
\begin{equation*}
%\label{Y:prmt}
Y_{\mu,\xi}=\setc*{g\in C\qty(\R^N)}{ \norm{g}_{Y_{\mu,\xi}}<+\infty },				
\end{equation*}
with the norms
\begin{align*}
\norm{\phi}_{X_{\mu,\xi}}=\sup_{x\in\R^N}
\mu^{\frac{N-2}{2}}\jp{{\frac{x-\xi}{\mu}}}^{N+2}\abs{\phi\qty(x)},	
\end{align*}
and
\begin{align*}
	\norm{g}_{Y_{\mu,\xi}}=\sup_{x\in\R^N}
	\mu^{\frac{N+2}{2}}\jp{{\frac{x-\xi}{\mu}}}^{N+2}\abs{g\qty(x)},	
\end{align*}
respectively.

\subsection{The ansatz and strategy of the proof}
\label{sec:ansatz}
Let us briefly sketch the main ingredients of the proof of \Cref{thm:main}.

First of all, we claim that, for any $k$ satisfies \ref{k:pstv} and \ref{k:reg}, in order to find non-trivial solutions of \eqref{NLH-eps}, we only need to solve 
\begin{equation}
	\label{NLH-eps+}
	-{\Laplace u}
	-{\alpha}
	\Riesz{\qty(u_{+})^p}
	\qty(u_{+})^{p-1}
	-\eps k\qty(u_{+})^{\frac{N+2}{N-2}}
	=0,\qq{in}\R^N,\quad u\in\dot{H}^1\setminus\{0\},
\end{equation}
where $u_{+}=\max\left\{u,0\right\}$.
Indeed, if $u$ satisfies \eqref{NLH-eps+}, then by multiplying 
$u_{-}=-\min\left\{u,0\right\}$ and integrating over $\R^N$, we get
\begin{align*}
	\int_{\R^N}\abs{\grad{u_{-}}}^2=0,
\end{align*} 
which implies that $u_{-}=0$, and therefore $u\geq 0$. Then by the maximum principle, we immediately get $u>0$. Moreover, by a standard elliptic regularity argument, for any $u$ solves \eqref{NLH-eps+}, we have $u\in C^2\qty(\R^N)$.

Now, by noticing that, for any $\qty(\mu,\xi)\in\qty(0,+\infty)\times\R^N$, 
\begin{equation*}
	-{\Laplace U_{\mu,\xi}}
	-{\alpha}
	\Riesz{U_{\mu,\xi}^p}
	U_{\mu,\xi}^{p-1}
	=0,
\end{equation*}
one may expect that, for $\eps>0$ sufficiently small,  \eqref{NLH-eps+} has at least one solution which
has the form $u=U_{\mu,\xi}+\phi$ with $\phi$ sufficiently small compared with $U_{\mu,\xi}$. 
By direct computations, we get that $u=U_{\mu,\xi}+\phi$ solves \eqref{NLH-eps+} if and only if
$\phi$ satisfies
\begin{align}
	\label{phi:equation}
	{\mathscr{L}_{\mu,{\xi}}\phi}=N\qty(\phi;\mu,\xi)+\eps k\qty(U_{\mu,\xi}+\phi)_{+}^{\frac{N+2}{N-2}},
\end{align}
where
\begin{equation*}
	{\mathscr{L}_{\mu,\xi}\phi}
	=
	-{\Laplace\phi}
	-\alpha p\Riesz{U_{\mu,\xi}^{p-1}\phi}{U_{\mu,\xi}^{p-1}}
	-\alpha\qty(p-1)\Riesz{U_{\mu,\xi}^{p}}{U_{\mu,\xi}^{p-2}}{\phi},	
\end{equation*}
and
\begin{align*}
	N\qty(\phi;\mu,\xi)=
	&{\alpha}
	\Riesz{\qty(U_{\mu,\xi}+\phi)_{+}^p}
	\qty(U_{\mu,\xi}+\phi)_{+}^{p-1}-
	\alpha p\Riesz{U_{\mu,\xi}^{p-1}\phi}{U_{\mu,\xi}^{p-1}}
	\\
	&-\alpha\qty(p-1)\Riesz{U_{\mu,\xi}^{p}}{U_{\mu,\xi}^{p-2}}{\phi}.	
\end{align*}

We solve \eqref{phi:equation} in two steps.
\subsubsection*{Step 1}Solve the nonlinear projected version of \eqref{phi:equation},
\begin{align}
	\label{phi:equation:prjct}
	\begin{cases}
		{\mathscr{L}_{\mu,{\xi}}\phi\qty[\mu,\xi]}=N\qty(\phi\qty[\mu,\xi];\mu,\xi)+\eps k\qty(U_{\mu,\xi}+\phi\qty[\mu,\xi])_{+}^{\frac{N+2}{N-2}}-\sum_{j=0}^N c_{\eps,j}\qty(\mu,\xi) H_{j;\xi}^{\mu},\\
		\int_{\R^N}\phi\qty[\mu,\xi]H_{j;\xi}^{\mu}=0,\qq{}
		j=0,1,\ldots,N, 
	\end{cases}	
\end{align} 
where
\begin{align*}
	c_{\eps,j}\qty(\mu,\xi)=\frac{1}{\displaystyle\int_{\R^N}H_{j}Z_{j}}
	\int_{\R^N}\qty(N\qty(\phi\qty[\mu,\xi];\mu,\xi)+\eps k\qty(U_{\mu,\xi}+\phi)_{+}^{\frac{N+2}{N-2}})Z_{j;\mu,\xi}.
\end{align*}
In order to do this, in \Cref{sec:lt}, we firstly develop a theory of
solvability to associated linear problem
\begin{align*}
\begin{dcases}
	{\mathscr{L_{\mu,\xi}}\phi} = g-
	\sum_{j=0}^{N}
	\frac{\displaystyle\int_{\R^N}gZ_{j;\mu,\xi}}{\displaystyle\int_{\R^N}H_jZ_j}H_{j;\xi}^{\mu},&
	\\
	\int_{\R^N}{\phi}{H_{j;\xi}^{\mu}}=0,&\qq{}j=0,1,\ldots,N,
\end{dcases}				
\end{align*}
where $g\in Y_{\mu,\xi}$. Then in \Cref{sec:nlprjct}, by making use of the Banach contraction principle, we show that
there exists one unique solution $\phi\qty[\mu,\xi]$ to \eqref{phi:equation:prjct} in $X_{\mu,\xi}$. Moreover, we prove that the solution $\phi\qty[\mu,\xi]$ is $C^1$ with respect to
$\qty(\mu,\xi)$. 
\subsubsection*{Step 2}
By making use of a topological index argument,  
in \Cref{sec:fntdim}, we will show that, for each $\eps$ sufficiently small, there exists $\qty(\mu^{\eps},\xi^{\eps})\in\qty(0,+\infty)\times\R^N$ such that
\begin{align*}
	c_{\eps,j}\qty(\mu^{\eps},\xi^{\eps})=0,\qq{} j=0,1,\ldots,N,
\end{align*}
which enables us to finish the proof of \Cref{thm:main} in \Cref{sec:pf}.

\section{Preliminary results}
In this section, we gather several preliminary results.

First of all, we recall the following proposition which plays an important role in the analysis of linear theory.
\begin{prop}\cite{TX2022}
\label{prop:nondeg}
If $\Phi$ is bounded with $\lim_{\abs{x}\to+\infty}\Phi\qty(x)=0$ satisfies
\begin{align*}
	-{\Laplace \Phi}
	=\alpha p
	\Riesz{U^{p-1}Z_{j}}{U^{p-1}}
	+\alpha\qty(p-1)\Riesz{U^{p}}{U^{p-2}}\Phi,
\end{align*}
then $\Phi$ is a  linear combination of $Z_0$,~$Z_1$,~$\ldots$,~$Z_N$.
\end{prop}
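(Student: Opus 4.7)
The plan is to establish the nondegeneracy of $U$ by reducing the linearized Choquard equation to an equivalent system of integral equations and then exploiting conformal invariance via a Kelvin transform and a spherical-harmonic decomposition. First, I would introduce the auxiliary function $\Psi := \alpha\Riesz{U^{p-1}\Phi}$, so that the equation rewrites as the linearization of the coupled system \eqref{NLHsys} about the pair $(U, \alpha\Riesz{U^p})$. Inverting $-\Laplace$ against the Newtonian potential then recasts the problem as a Hammerstein integral system for $(\Phi,\Psi)$ whose coefficients inherit the conformal symmetries of $U$.

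Second, I would apply the Kelvin transform $x\mapsto x/\abs{x}^2$ with weights tailored to the scaling of $\Phi$ and $\Psi$. The assumptions that $\Phi$ is bounded and $\lim_{\abs{x}\to\infty}\Phi(x)=0$, combined with standard decay estimates for the Riesz potential, ensure that the transformed functions are bounded and extend continuously across the origin, effectively compactifying the problem. On this picture I would expand $\widetilde{\Phi}=\sum_{\ell\geq 0}\phi_\ell(r)Y_\ell(\theta)$, and analogously for $\widetilde{\Psi}$. Rotational invariance of all the kernels decouples the system into one-dimensional Hammerstein problems indexed by the spherical-harmonic degree $\ell$.

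Third, I would analyze the modes separately. The $\ell=0$ mode, corresponding to the scaling direction, yields a one-dimensional space spanned by $Z_0$; the $\ell=1$ mode yields an $N$-dimensional space spanned by $Z_1,\ldots,Z_N$ arising from the $N$ translation symmetries. For $\ell\geq 2$, one shows, either via a mode-wise Poho\v{z}aev identity or an explicit Funk--Hecke expansion of the Riesz kernel on the sphere, that the only bounded solution with the prescribed decay at infinity is the trivial one; assembling the three cases gives precisely the $(N+1)$-dimensional kernel claimed.

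The principal obstacle is the $\ell\geq 2$ case: because the linearized operator is a coupled nonlocal system involving both $U^{p-1}$ and $U^{p-2}$ multipliers, the radial equation in each mode is a fractional integral eigenvalue problem rather than a classical Jacobi-type ODE. The delicate step is therefore to establish a strict spectral gap for every $\ell\geq 2$, uniformly in the admissible range $0<\lambda<N$, $\lambda\leq 4$; this is precisely the content of the result cited from \cite{TX2022}, and I would reduce its verification to a comparison of explicit Gamma-function quotients arising from the Funk--Hecke formula applied to the Riesz kernel.
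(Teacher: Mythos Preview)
The paper does not give a proof of this proposition; it is simply recalled from \cite{TX2022} as a known input for the linear theory of Section~\ref{sec:lt}. There is therefore no argument in the present manuscript against which to compare your proposal. Your outline---passing to the coupled integral system via the auxiliary function $\Psi=\alpha\Riesz{U^{p-1}\Phi}$, exploiting conformal invariance through the Kelvin transform, and decomposing in spherical harmonics with a mode-by-mode spectral analysis---is the standard route for nondegeneracy of bubbles in Choquard-type problems and is, as you yourself acknowledge at the end, essentially the content of the cited reference.

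One remark on the statement itself: the displayed equation carries a typographical slip. The first Riesz-potential term $\Riesz{U^{p-1}Z_j}$ should read $\Riesz{U^{p-1}\Phi}$, so that the hypothesis is precisely $\mathscr{L}\Phi=0$ in the notation of \eqref{L}; your proposal correctly interprets it this way.
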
 

%Recall, for any $\qty(\mu,\xi)\in\qty(0,+\infty)\times\R^N$, the function $U_{\mu,\xi}$ given by \eqref{sltn:form} solves \eqref{NLH0}, i.e.
%\begin{align}
%	\label{NLH:prmt}
%-{\Laplace U_{\mu,\xi}}
%-{\alpha}
%\Riesz{U_{\mu,\xi}^p}
%U_{\mu,\xi}^{p-1}
%=0.	
%\end{align}
%Therefore, at lease formally, by differentiating \eqref{NLH:prmt} with respect to $\qty(\mu,\xi)$ at the point $\qty(1,0)$, we get 
%\begin{align}
%\label{lin:null}
%-{\Laplace Z_{j}}
%=\alpha p
%\Riesz{U^{p-1}Z_{j}}{U^{p-1}}
%+\alpha\qty(p-1)\Riesz{U^{p}}{U^{p-2}}\qty(x)Z_{j},
%\end{align}
%where $j=0,1,\cdots,N$.
%An interesting problem arises in the study of positive solutions \eqref{sltn:form} to \eqref{NLH0} is that  
%It is worth noting that, in  
%
%Moreover, it is easy to check that, for any $\qty(\mu,\xi)\in\qty(0,+\infty)\times\R^N$, the function  
%
%satisfies
%
%
%
%
%
%Very recently, in \cite{t.x.e}, Tang, Xu, et.al. proved the following property:

Next, we recall some classical results about the Poisson equations.
\begin{lemm}
\label{lem:possion}
For any $g\in Y$, there exists a unique function $\phi\in X$
satisfying
\begin{equation}
	\label{possion}
	-\Laplace\phi = g,\qq{in}\R^N.
\end{equation} 
Moreover, we have
\begin{equation}
	\label{g2phi}
	{\phi}\qty(x)=\frac{1}{N\qty(N-2)\omega_N}\int_{\R^N}
	\frac{\fct{g}{y}}{\abs{x-y}^{N-2}}\odif{y},
	\qq{with}
	\omega_N=\frac{2\pi^{\frac{N}{2}}}{N\fct{\Gamma}{\frac{N}{2}}},
\end{equation}
and
\begin{align}
	\label{est:possion}
	\norm{\phi}_X\leq C\norm{g}_Y.
\end{align}	
\end{lemm}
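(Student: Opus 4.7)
The plan is to establish existence via the explicit Newton-potential formula \eqref{g2phi}, derive the pointwise decay estimate \eqref{est:possion} by splitting the convolution integral into two regions based on the relative sizes of $\abs{x-y}$ and $\abs{x}$, and then obtain uniqueness through a Liouville-type argument.

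For existence, I would define $\phi$ directly by \eqref{g2phi}. Since $g\in Y$ satisfies $\abs{g\qty(y)}\leq\norm{g}_Y\jp{y}^{-\qty(N+2)}$, the convolution is absolutely convergent: the kernel $\abs{x-y}^{-\qty(N-2)}$ is locally integrable near $y=x$ (as $N-2<N$), and $\jp{y}^{-\qty(N+2)}$ is integrable at infinity because $N+2>N$. Standard results on the Newtonian potential then give $\phi\in C^2\qty(\R^N)$ together with $-\Laplace\phi=g$ pointwise in $\R^N$.

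The heart of the argument is the decay estimate $\abs{\phi\qty(x)}\leq C\norm{g}_Y\jp{x}^{-\qty(N-2)}$. For $\abs{x}\leq 1$ the bound is uniform, obtained by splitting into $\setc{y}{\abs{x-y}\leq 1}$ (using integrability of the Newton kernel near the diagonal together with $\abs{g\qty(y)}\leq \norm{g}_Y$) and its complement (using $\abs{x-y}^{-\qty(N-2)}\leq 1$ together with integrability of $\jp{y}^{-\qty(N+2)}$). For $\abs{x}\geq 1$, I would split $\R^N$ into the two regions
\begin{align*}
A=\setc{y\in\R^N}{\abs{x-y}\geq\abs{x}/2},\qquad B=\setc{y\in\R^N}{\abs{x-y}<\abs{x}/2}.
\end{align*}
On $A$ one has $\abs{x-y}^{-\qty(N-2)}\leq C\abs{x}^{-\qty(N-2)}$, so the contribution is bounded by $C\norm{g}_Y\abs{x}^{-\qty(N-2)}\int_{\R^N}\jp{y}^{-\qty(N+2)}\odif{y}$. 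On $B$ one has $\abs{y}\geq\abs{x}/2$, hence $\jp{y}^{-\qty(N+2)}\leq C\abs{x}^{-\qty(N+2)}$, and the remaining singular integral contributes a factor of $\abs{x}^2$, so the bound on $B$ is $C\norm{g}_Y\abs{x}^{-N}$, which is strictly better. Combining the two contributions yields \eqref{est:possion}.

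For uniqueness, if $\phi_1,\phi_2\in X$ both satisfy \eqref{possion}, then $\psi=\phi_1-\phi_2$ is harmonic on $\R^N$ with $\abs{\psi\qty(x)}\leq C\jp{x}^{-\qty(N-2)}\to 0$ as $\abs{x}\to\infty$ (since $N\geq 3$). Liouville's theorem forces the bounded harmonic function $\psi$ to be constant, and the decay at infinity then gives $\psi\equiv 0$. I expect no essential obstacle beyond bookkeeping; the only delicate point is ensuring the two-region split delivers precisely the $\jp{x}^{-\qty(N-2)}$ decay, and this is handled cleanly because the $N+2>N$ tail of $g$ is exactly the rate required for the Newton-kernel convolution to map $Y$ continuously into $X$.
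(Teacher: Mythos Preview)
Your argument is correct. The overall architecture matches the paper's: define $\phi$ via the Newton potential \eqref{g2phi}, verify the equation, establish the pointwise decay, and argue uniqueness. The implementation, however, differs in two places. For the decay estimate \eqref{est:possion}, the paper simply invokes the pre-established convolution bound of \Cref{lem:decay} (namely $\int_{\R^N}\abs{x-y}^{-\lambda}\jp{y}^{-\theta}\odif{y}\leq C\jp{x}^{-\lambda}$ with $\lambda=N-2$, $\theta=N+2$), whereas your two-region split $A\cup B$ is effectively a direct proof of that lemma in this special case; both yield the same $\jp{x}^{-(N-2)}$ rate. For existence and uniqueness, the paper first shows $g\in L^{\frac{2N}{N+2}}$, obtains the unique $\dot{H}^1$ solution, and then cites \cite[Theorem~6.21]{LL2001} to recover the integral representation; you instead start from the Newton potential and close uniqueness with Liouville for bounded harmonic functions decaying at infinity. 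Your route is more self-contained and avoids the $\dot{H}^1$ detour and the external reference, while the paper's route is shorter given that \Cref{lem:decay} and the Lieb--Loss representation are already available in its toolkit.
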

\begin{proof}
Let $g\in Y$ be arbitrarily fixed. 
Using the fact that
\begin{align*}
	\int_{\R^N}\abs{g\qty(x)}^{\frac{2N}{N+2}}\odif{x}
	\leq \norm{g}_{Y}^{\frac{2N}{N+2}}\int_{\R^N}\frac{1}{\jp{x}^{2N}}
	\odif{x}\leq C \norm{g}_{Y}^{\frac{2N}{N+2}},
\end{align*}
we have $g\in L^{\frac{2N}{N+2}}$, and therefore 
there exists a unique function $\phi\in\dot{H}^1$ satisfying \eqref{possion}. Moreover, notice that $g\in C\qty(\R^N)$, we have $\phi\in C^2\qty(\R^N)$. By \cite[Theorem 6.21]{LL2001}, the function $\phi$ satisfies the integral equation \eqref{g2phi}.

Next, by making use of the inequality
$\abs{g\qty(x)}\leq \frac{1}{\jp{x}^{N+2}}\norm{g}_Y$,
we deduce from \eqref{g2phi} that,
\begin{equation*}
	\abs{{\phi}\qty(x)}
	\leq 
	\frac{\norm{g}_Y}{N\qty(N-2)\omega_N}\int_{\R^N}
	\frac{1}{\abs{x-y}^{N-2}}\frac{1}{\jp{y}^{N+2}}
	\odif{y},
\end{equation*}
which, together with \Cref{lem:identities},
implies that 
\begin{align*}
	\jp{x}^{N-2}\abs{{\phi}\qty(x)}\leq C\norm{g}_{Y},
\end{align*}
Therefore, \eqref{est:possion} holds. This ends the proof of \Cref{lem:possion}.	
\end{proof}

\begin{coro}
\label{coro:possion}
Suppose $g\in Y$ satisfies $\int_{\R^N}{g}{Z_j}=0$, $j=0,1,\ldots,N$. 
Then the function $\phi\in X$ given by \eqref{g2phi} is the unique solution to the problem
\begin{align}
\label{possion:otgn}
\begin{dcases}
	-\Laplace\phi = g,&
	\\
	\phi\in X,\qq{and} \int_{\R^N}{\phi}{H_j}=0,&\quad j=0,1,\ldots,N.
\end{dcases}		
\end{align}	
\end{coro}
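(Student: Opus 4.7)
The existence, uniqueness, and integral representation \eqref{g2phi} of $\phi \in X$ solving $-\Laplace \phi = g$ are furnished directly by \Cref{lem:possion}. Hence the task reduces to verifying that this $\phi$ automatically satisfies the orthogonality conditions $\int_{\R^N}\phi H_j = 0$ for $j = 0, 1, \ldots, N$.

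The key identity is that $H_j$ is, up to a universal constant, minus the Laplacian of $Z_j$. Since the Aubin--Talenti bubble $U_{\mu,\xi}$ satisfies $-\Laplace U_{\mu,\xi} = N(N-2)\, U_{\mu,\xi}^{(N+2)/(N-2)}$, differentiating this equation at $(\mu,\xi) = (1,0)$ with respect to the parameter indexed by $j$ gives
\begin{equation*}
-\Laplace Z_j \;=\; N(N-2)\cdot\frac{N+2}{N-2}\, U^{4/(N-2)} Z_j \;=\; N(N+2)\, H_j.
\end{equation*}
With this identity in hand, integrating by parts twice formally produces
\begin{equation*}
\int_{\R^N}\phi H_j \;=\; -\frac{1}{N(N+2)}\int_{\R^N}\phi\,\Laplace Z_j \;=\; -\frac{1}{N(N+2)}\int_{\R^N}Z_j\,\Laplace\phi \;=\; \frac{1}{N(N+2)}\int_{\R^N} g Z_j \;=\; 0,
\end{equation*}
the last equality being the hypothesis on $g$.

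The only technical point is to justify the double integration by parts, and this is the step I expect to demand the most care (though it is still routine). From $\phi \in X$ one has $\abs{\phi(x)} \leq C\jp{x}^{-(N-2)}$; differentiating the Newton-potential representation \eqref{g2phi} and splitting the resulting integral into the regions $\abs{y} < \abs{x}/2$, $\abs{y} \sim \abs{x}$, and $\abs{y} > 2\abs{x}$ against the bound $\abs{g(y)} \leq \norm{g}_Y\jp{y}^{-(N+2)}$ yields $\abs{\nabla\phi(x)} \leq C\jp{x}^{-(N-1)}$. The explicit formulas \eqref{Zj} give the matching decay $\abs{Z_j(x)} \leq C\jp{x}^{-(N-2)}$ and $\abs{\nabla Z_j(x)} \leq C\jp{x}^{-(N-1)}$. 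Hence both boundary fluxes on $\partial B_R$ are of order $R^{N-1}\cdot R^{-(N-2)}\cdot R^{-(N-1)} = R^{-(N-2)}$, which tends to zero for every $N \geq 3$, legitimizing both applications of Green's identity. Uniqueness in $X$ is already contained in \Cref{lem:possion}, so the proof of \Cref{coro:possion} is complete.
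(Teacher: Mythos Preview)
Your proof is correct. Both you and the paper reduce the task to checking $\int_{\R^N}\phi H_j=0$ via the identity $-\Laplace Z_j = N(N+2)H_j$, but you take a different route to exploit it: you integrate by parts twice and control the boundary terms on $\partial B_R$ using pointwise decay of $\phi,\nabla\phi,Z_j,\nabla Z_j$, whereas the paper substitutes the Newton-potential formula \eqref{g2phi} for $\phi$, applies Fubini, and recognises the inner integral as (a constant times) $Z_j$ via the Newton-potential representation of $Z_j$ coming from \eqref{H2Z}. The paper's approach is marginally cleaner because absolute integrability for Fubini follows immediately from \Cref{lem:decay} without needing to derive the gradient bound $\abs{\nabla\phi(x)}\leq C\jp{x}^{-(N-1)}$ or track surface fluxes; your approach, on the other hand, is more self-contained and makes the symmetry $\int\phi\,\Laplace Z_j=\int Z_j\,\Laplace\phi$ explicit. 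Either way the substance is the same.
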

\begin{proof}
Let $g\in Y$ satisfy $\int_{\R^N}{g}{Z_j}=0$, $j=0,1,\ldots,N$.
By \Cref{lem:possion}, we only need to verify that
\begin{align}
	\label{phi:otgn:H}
	\int_{\R^N}{\phi}{H_j}=0,&\qq{}j=0,1,\ldots,N.
\end{align}

By \eqref{g2phi}, we obtain
\begin{align*}
	\int_{\R^N}{\phi}{H_j}
	=
	\int_{\R^N}\qty(\frac{1}{N\qty(N-2)\omega_N}\int_{\R^N}
	\frac{\fct{g}{y}}{\abs{x-y}^{N-2}}\odif{y})H_j\qty(x)\odif{x}.
\end{align*}
By making use of the Fubini theorem, we get
\begin{align*}
	\int_{\R^N}{\phi}{H_j}
	=
	\int_{\R^N}\qty(\frac{1}{N\qty(N-2)\omega_N}\int_{\R^N}
	\frac{H_j\qty(x)}{\abs{y-x}^{N-2}}\odif{x})g\qty(y)\odif{y},
\end{align*}
which, toghether with \eqref{H2Z}, implies that
\begin{align*}
	\int_{\R^N}{\phi}{H_j}=	\int_{\R^N}{g}{Z_j}.
\end{align*}
Since $\int_{\R^N}{g}{Z_j}=0$, $j=0,1,\ldots,N$, we have \eqref{phi:otgn:H} holds. This ends the proof of \Cref{coro:possion}.
\end{proof}
%where $j=0,1,\ldots,N$.
%and
%\begin{align}
%	\label{Z0}
%	\fct{Z_{0}}{x}
%	%	:=
%	%	\frac{N-2}{2}\fct{U}{x}+x\cdot\fct{\nabla u}{x}
%	=\frac{N-2}{2}\frac{\abs{x}^{2}-1}{\qty(1+\abs{x}^{2})^\frac{N}{2}}
%	,
%\end{align}
%It is easy to check that
%\begin{align}
%	Z_{0}\qty(x)
%	=
%	\left.\pdv{U_{\mu,\xi}}{\mu}\qty(x)\right|_{\qty(\mu,\xi)=\qty(1,0)},
%	\quad
%	Z_{j}\qty(x)
%	=
%	\left.\pdv{U_{\mu,\xi}}{\xi_j}\qty(x)\right|_{\qty(\mu,\xi)=\qty(1,0)}.	
%\end{align}
%\begin{align}
%	Z_{0}\qty(x)
%	=
%	\left.\pdv{U_{\mu,\xi}}{\mu}\qty(x)\right|_{\qty(\mu,\xi)=\qty(1,0)},
%	~~
%	Z_{j}\qty(x)
%	=
%	\left.\pdv{U_{\mu,\xi}}{\xi_j}\qty(x)\right|_{\qty(\mu,\xi)=\qty(1,0)},
%\end{align}
%where $Z_j$ ($j=0,1,\ldots,N$) are given by \eqref{Zj}.
%\begin{align}
%\label{lin:null}
%-{\Laplace Z_j }
%={\alpha}p\Riesz{U^{p-1}Z_j}U^{p-1}
%+
%{\alpha}\qty(p-1)\Riesz{U^p}U^{p-2}Z_j
%\end{align}
%
%
%\begin{equation}
%	\ip{H_i}{Z_j}=
%	\begin{cases}
%		0, & i\neq j \\
%		1, & i=j.	
%	\end{cases}
%\end{equation}	
%\begin{equation}
%\label{L}
%{\mathscr{L}\phi}
%=
%-{\Laplace\phi}
%-\alpha p\Riesz{U^{p-1}\phi}{U^{p-1}}
%-\alpha\qty(p-1)\Riesz{U^{p}}{U^{p-2}}{\phi}.	
%\end{equation}
%\begin{align}
%	\J\qty[\mu,\xi]\qty[ \varphi ]
%	=\Riesz{U_{\mu,\xi}^{p}}U_{\mu,\xi}^{p-2}\varphi
%\end{align}
%\begin{align}
%\bar{\I}_j\qty[ \varphi ]
%=
%\qty(p-1)\Riesz{U^{p-2}Z_{j}\varphi}U^{p-1}
%+
%\qty(p-1)\Riesz{U^{p-1}\varphi}U^{p-2}Z_{j}
%\end{align}
%\begin{align}
%\bar{\J}_j\qty[ \varphi ]
%	=
%p\Riesz{U^{p-1}Z_{j}}U^{p-2}\varphi
%+
%\qty(p-2)\Riesz{U^{p}}U^{p-3}Z_{j}\varphi
%\end{align}
\section{Linear theory}
\label{sec:lt}
In this section, we mainly 
develop the theory of solvability to the linear equation \begin{align}
\label{lin:nhmgns:nothgn:prmt}
\begin{dcases}
	{\mathscr{L_{\mu,\xi}}\phi} = g-
	\sum_{j=0}^{N}
	\frac{\displaystyle\int_{\R^N}gZ_{j;\mu,\xi}}{\displaystyle\int_{\R^N}H_jZ_j}H_{j;\xi}^{\mu},
	\\
	\int_{\R^N}{\phi}{H_{j;\xi}^{\mu}}=0,\qq{}j=0,1,\ldots,N,
\end{dcases}				
\end{align}
where
\begin{equation}
\label{L:prmt}
{\mathscr{L}_{\mu,\xi}\phi}
=
-{\Laplace\phi}
-\alpha p\Riesz{U_{\mu,\xi}^{p-1}\phi}{U_{\mu,\xi}^{p-1}}
-\alpha\qty(p-1)\Riesz{U_{\mu,\xi}^{p}}{U_{\mu,\xi}^{p-2}}{\phi}.			
\end{equation}
The rest of this section is devoted to the proof of the following proposition.
\begin{prop}
\label{prop:linsltn:prmt}
For any $\qty(\mu,\xi)\in\qty(0,+\infty)\times\R^N$, and  any $g \in Y_{\mu,\xi}$ with
\begin{equation}
\label{g:othgn:prmt}
\int_{\R^N}{g}{Z_{j;\xi}^{\mu}}=0,\qq{} j=0,1,\ldots,N,	
\end{equation}
there exists a unique function $\phi=\T\qty[{\mu,\xi}]g$ in $X_{\mu,\xi}$ solving \eqref{lin:nhmgns:nothgn:prmt}.
Moreover, we have the following estimate
\begin{align}
\label{g2phi:prmt:est}
	\norm{\phi}_{X_{\mu,\xi}}\leq C_0 \norm{g}_{Y_{\mu,\xi}},
\end{align}
where $C_0$ is a positive constant (independent of $\mu$ and $\xi$).
%
%\begin{equation}
%	\norm{\jp{\frac{x-\xi}{\mu}}^{N-2}{\phi}}_{\infty}<+\infty
%\end{equation}
%such that, $\phi$ solves 
%%\begin{align}
%%	\label{lin:nonhmgns:nonothgn:prmt}
%%	\begin{dcases}
%	%		{\mathscr{L_{\mu,\xi}}\phi} = g-
%	%		\sum_{j=0}^{N}c_{\mu,\xi,j} U_{\mu,\xi}^{\frac{4}{N-2}}Z_{\mu,\xi,j},&
%	%		\\
%	%		\int_{\R^N}{\phi}{U_{\mu,\xi}^{\frac{4}{N-2}}Z_{\mu,\xi,j}}=0,&\qq{where}j=0,1,\ldots,N.
%	%	\end{dcases}			
%%\end{align}
%with
%\begin{equation}
%	c_{\mu,\xi,j}
%	=
%	\dfrac{\displaystyle\int_{\R^N}g Z_{\mu,\xi,j}}{\displaystyle\int_{\R^N}U_{\mu,\xi}^{\frac{4}{N-2}}Z^2_{\mu,\xi,j}}
%\end{equation}
%Moreover, we have
%\begin{equation}
%	\norm{\jp{\frac{x-\xi}{\mu}}^{N-2}{\phi}}_{\infty}\leq C \mu^2 \norm{\jp{\frac{x-\xi}{\mu}}^{N+2}g}_{\infty}.
%\end{equation}
%\begin{equation}
%	\norm{\frac{\phi}{U_{\mu,\xi}}}_{\infty}\leq C_0  \norm{\frac{g}{U_{\mu,\xi}^{\frac{N+2}{N-2}}}}_{\infty}.
%\end{equation}	
\end{prop}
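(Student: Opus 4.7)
The plan is to reduce by dilation to the reference case $(\mu,\xi)=(1,0)$, establish the a priori bound \eqref{g2phi:prmt:est} there by a contradiction/compactness argument using \Cref{prop:nondeg}, and then deduce existence via the Fredholm alternative. Under the change of variables $\phi(x)=\mu^{-(N-2)/2}\tilde\phi\bigl((x-\xi)/\mu\bigr)$ and $g(x)=\mu^{-(N+2)/2}\tilde g\bigl((x-\xi)/\mu\bigr)$, the $X_{\mu,\xi}$ and $Y_{\mu,\xi}$ norms are invariant, $\mathscr{L}_{\mu,\xi}$ transforms into $\mathscr{L}_{1,0}$, and the functions $Z_{j;\mu,\xi}=\mu^{2}Z_{j;\xi}^{\mu}$, $H_{j;\xi}^{\mu}$ together with the orthogonality integrals transform covariantly. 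It therefore suffices to work at $(\mu,\xi)=(1,0)$; the hypothesis $\int gZ_{j;\xi}^{\mu}=0$ makes the multipliers on the right-hand side of \eqref{lin:nhmgns:nothgn:prmt} vanish, reducing the problem to $\mathscr{L}\phi=g$ subject to $\int\phi H_j=0$ for $j=0,1,\ldots,N$.

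For the a priori estimate I argue by contradiction: assume sequences $\phi_n\in X$, $g_n\in Y$ satisfy $\norm{\phi_n}_X=1$, $\norm{g_n}_Y\to 0$, $\int g_n Z_j=0=\int\phi_n H_j$, and $\mathscr{L}\phi_n=g_n$. The pointwise bound $\abs{\phi_n(x)}\le\jp{x}^{-(N-2)}$ combined with interior $C^{2}_{\mathrm{loc}}$ estimates for the elliptic equation allows me, after extracting a subsequence, to pass to a limit $\phi_\infty$ in $C^1_{\mathrm{loc}}$, with $\abs{\phi_\infty(x)}\le\jp{x}^{-(N-2)}$ and $\mathscr{L}\phi_\infty=0$. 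By \Cref{prop:nondeg}, $\phi_\infty=\sum_{i=0}^N a_i Z_i$. Passing to the limit in $\int\phi_n H_j=0$ via dominated convergence (with integrable majorant $\abs{H_j}\lesssim\jp{x}^{-(N+2)}$), and using the parity identities $\int Z_i H_j=\delta_{ij}\int Z_j H_j$ together with $\int Z_j H_j\ne 0$, forces every $a_j=0$, so $\phi_\infty\equiv 0$.

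The main obstacle is to convert $\phi_\infty\equiv 0$ into a contradiction with $\norm{\phi_n}_X=1$. Decompose $\mathscr{L}\phi_n=-\Delta\phi_n-K\phi_n$, where $K\phi:=\alpha p\,\Riesz{U^{p-1}\phi}\,U^{p-1}+\alpha(p-1)\,\Riesz{U^p}\,U^{p-2}\phi$. The pointwise bound $\abs{K\phi_n(x)}\lesssim\jp{x}^{-(N+2)}\norm{\phi_n}_X$ follows from $U^{p-1}(x)\lesssim\jp{x}^{-(N+2-\lambda)}$, $U^{p-2}(x)\lesssim\jp{x}^{-(4-\lambda)}$, and the far-field decay $\Riesz{U^{p-1}\phi_n}(x),\,\Riesz{U^p}(x)\lesssim\jp{x}^{-\lambda}$. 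Crucially, because $\phi_n\to 0$ locally uniformly and $U^{p-1}$, $U^{p-2}$ enforce rapid decay in the $y$-integrand, the integrands defining $K\phi_n$ tend pointwise to $0$ under a fixed integrable majorant, so dominated convergence yields $\norm{K\phi_n}_Y\to 0$. Applying \Cref{lem:possion} to $-\Delta\phi_n=g_n+K\phi_n$ then gives $\norm{\phi_n}_X\le C(\norm{g_n}_Y+\norm{K\phi_n}_Y)\to 0$, contradicting $\norm{\phi_n}_X=1$ and establishing \eqref{g2phi:prmt:est}.

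With the a priori estimate in hand, existence and uniqueness on the closed subspace $E:=\{\phi\in X:\int\phi H_j=0,\ j=0,\ldots,N\}$ follow from the Fredholm alternative: writing $(I-T)\phi=(-\Delta)^{-1}g$ with $T:=(-\Delta)^{-1}K$, compactness of $T$ on $X$ (by Arzel\`a--Ascoli using the uniform pointwise decay of $K\phi$ together with the continuity of $(-\Delta)^{-1}$ from \Cref{lem:possion}) makes $I-T$ Fredholm of index zero, and \Cref{prop:nondeg} identifies $\ker(I-T)=\mathrm{span}\{Z_0,\ldots,Z_N\}$. The assumption $\int gZ_j=0$ is the Fredholm solvability condition, while the orthogonality constraints defining $E$ single out the unique preimage; the a priori bound then yields the estimate $\norm{\phi}_X\le C_0\norm{g}_Y$. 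Undoing the scaling produces the operator $\mathcal{T}[\mu,\xi]$ for general parameters and completes the proof.
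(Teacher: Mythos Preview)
Your scaling reduction to $(\mu,\xi)=(1,0)$ and the Fredholm-alternative step for existence are correct and in fact coincide with the paper's own argument (\Cref{lem:linsltn}): the paper rewrites the equation as $\phi=\A\phi+\N g$ on the constrained subspace, proves compactness of $\A$ via Arzel\`a--Ascoli, kills the kernel through \Cref{prop:nondeg}, and then reads off \emph{both} existence and the bound \eqref{g2phi:bd} directly from the bounded-inverse theorem. Your separate contradiction argument for the a priori estimate is therefore unnecessary---and it contains a genuine error.

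The claim that ``dominated convergence yields $\norm{K\phi_n}_Y\to 0$'' fails for the multiplicative piece $\alpha(p-1)\,\Riesz{U^p}\,U^{p-2}\phi_n$. By the exact identity \eqref{A:id} this term equals $c\,\jp{x}^{-4}\phi_n(x)$ for a positive constant $c$, whence
\[
\sup_{x\in\R^N}\jp{x}^{N+2}\abs{c\,\jp{x}^{-4}\phi_n(x)}=c\,\norm{\phi_n}_X=c,
\]
which does not tend to $0$. Dominated convergence controls integrals, not weighted suprema, and nothing in your setup prevents the mass of $\phi_n$ from escaping to infinity while $\phi_n\to 0$ only locally uniformly; so the inequality $\norm{\phi_n}_X\le C(\norm{g_n}_Y+\norm{K\phi_n}_Y)$ produces no contradiction. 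The standard repair in blow-up arguments is to show instead that $\norm{(-\Delta)^{-1}K\phi_n}_X\to 0$ (the Newtonian smoothing buys enough extra decay for a near/far splitting in the $y$-integral to succeed), or---more simply---to drop the contradiction step altogether and obtain the estimate from the Fredholm inverse, exactly as the paper does.
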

In order to prove \Cref{prop:linsltn:prmt}, we require
the following lemma.

\begin{lemm}
\label{lem:linsltn}
For any $g\in Y$ with
\begin{equation}
\label{g:othgn}
	\int_{\R^N}{g}{Z_j}=0,\qq{} j=0,1,\ldots,N,
\end{equation}
there exists a unique $\phi\in X$ which solves 
\begin{align}
\label{lin:nonhmgns}
\begin{dcases}
	\Lscr\phi = g,
	\\
	\int_{\R^N}{\phi}{H_j}=0,\quad j=0,1,\ldots,N,
\end{dcases}
\end{align}
where
\begin{align}
\label{L}
{\mathscr{L}\phi}
=
-{\Laplace\phi}
-\alpha p\Riesz{U^{p-1}\phi}{U^{p-1}}
-\alpha\qty(p-1)\Riesz{U^{p}}{U^{p-2}}{\phi}.
\end{align}		
Moreover, there exists a positive constant $C$ such that,
\begin{equation}
\label{g2phi:bd}
	\norm{{\phi}}_X\leq C \norm{g}_{Y}.
\end{equation}	
\end{lemm}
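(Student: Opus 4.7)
The plan is to prove \Cref{lem:linsltn} by combining a blow-up/contradiction argument with the nondegeneracy result \Cref{prop:nondeg} to establish the a priori estimate \eqref{g2phi:bd}, and then to conclude existence via the Fredholm alternative on the closed subspace
\begin{align*}
X^\perp = \setc*{\phi\in X}{\int_{\R^N}\phi H_j = 0,\ j=0,1,\ldots,N}.
\end{align*}

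First, I recast $\mathscr{L}\phi = g$ as the Poisson equation $-\Laplace\phi = g + K\phi$, where
\begin{align*}
K\phi = \alpha p\Riesz{U^{p-1}\phi}{U^{p-1}} + \alpha\qty(p-1)\Riesz{U^p}{U^{p-2}\phi}.
\end{align*}
Using $U(x)\sim \jp{x}^{-(N-2)}$, the algebraic identity $(N-2)(p-1) = N+2-\lambda$, and standard estimates for the Riesz potential of a power-decaying function, a direct computation gives $K: X\to Y$ bounded with $\abs{K\phi(x)}\leq C\norm{\phi}_X\jp{x}^{-(N+2)}$. Composing with $(-\Laplace)^{-1}: Y\to X$ from \Cref{lem:possion}, the problem \eqref{lin:nonhmgns} becomes a compact perturbation of the identity on $X^\perp$, so the Fredholm alternative will reduce existence to the uniqueness that follows from \eqref{g2phi:bd}.

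The core step is proving \eqref{g2phi:bd} by contradiction. Suppose there exist $\{\phi_n\}\subset X^\perp$ and $\{g_n\}\subset Y$ satisfying \eqref{lin:nonhmgns} with $\norm{\phi_n}_X = 1$ and $\norm{g_n}_Y\to 0$. Interior elliptic estimates applied to $-\Laplace\phi_n = g_n + K\phi_n$ give a $C^1_{loc}$-convergent subsequence with limit $\phi_\infty$ satisfying $\abs{\phi_\infty(x)}\leq C\jp{x}^{-(N-2)}$, $\mathscr{L}\phi_\infty = 0$, and $\phi_\infty(x)\to 0$ as $\abs{x}\to\infty$. By \Cref{prop:nondeg}, $\phi_\infty = \sum_{j=0}^N a_j Z_j$. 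Passing each orthogonality $\int\phi_n H_j = 0$ to the limit by dominated convergence — legitimate since the integrands are dominated by $C\jp{x}^{-(N-2)}\abs{H_j(x)}$, which is integrable in view of \eqref{Hj} and \eqref{Zj} — yields $\int\phi_\infty H_j = 0$ for each $j$. The Gram matrix $\qty(\int Z_iH_j)_{i,j=0}^N$ is diagonal and nondegenerate by the parity structure visible in \eqref{Zj}, so all $a_j$ vanish and $\phi_\infty\equiv 0$.

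The main obstacle is then upgrading $\phi_n\to 0$ from local convergence to convergence in $X$: one must rule out mass escaping to infinity. The key observation is that $K\phi_n\to 0$ strongly in $Y$ along the subsequence, which follows from a dominated-convergence argument combining the local convergence of $\phi_n$ with the uniform envelope $\abs{K\phi_n(x)}\leq C\jp{x}^{-(N+2)}$ obtained above. Together with $\norm{g_n}_Y\to 0$, the Poisson estimate \eqref{est:possion} then gives $\norm{\phi_n}_X = \norm{(-\Laplace)^{-1}(g_n+K\phi_n)}_X\to 0$, contradicting $\norm{\phi_n}_X = 1$. With \eqref{g2phi:bd} in hand, uniqueness for \eqref{lin:nonhmgns} is immediate, and the Fredholm alternative applied to the compact operator $(-\Laplace)^{-1}K$ on $X^\perp$ supplies existence, with \Cref{coro:possion} ensuring that the hypothesis $\int gZ_j = 0$ is exactly compatible with the orthogonality defining $X^\perp$.
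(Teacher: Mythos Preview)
Your overall strategy differs from the paper's: the paper proves directly that the operator $\mathcal{A}=(-\Laplace)^{-1}K$ is compact on $\D$ via Arzel\`a--Ascoli and then invokes the Fredholm alternative, obtaining the bound \eqref{g2phi:bd} as a consequence of invertibility; you instead try to prove \eqref{g2phi:bd} first by a blow-up contradiction. Both routes are standard and both ultimately rest on \Cref{prop:nondeg}, but your execution of the upgrade step has a genuine gap.

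The claim that $K\phi_n\to 0$ strongly in $Y$ is false for the local (multiplicative) piece of $K$. By \eqref{A:id} one has $\Riesz{U^p}U^{p-2}=c\,\jp{x}^{-4}$, so
\[
\jp{x}^{N+2}\,\bigl|\alpha(p-1)\Riesz{U^p}(x)U^{p-2}(x)\phi_n(x)\bigr|
= c\,\jp{x}^{N-2}\abs{\phi_n(x)}.
\]
If the supremum in $\norm{\phi_n}_X=1$ is (nearly) attained at points $x_n$ with $\abs{x_n}\to\infty$ --- precisely the escape-to-infinity scenario you must exclude --- then $\norm{K\phi_n}_Y\geq c>0$ for all $n$, and no dominated-convergence argument can force a weighted sup norm to vanish. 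Dominated convergence gives only pointwise (or $L^1$) decay of $K\phi_n$, so the inference $\norm{\phi_n}_X\leq C\norm{g_n+K\phi_n}_Y\to 0$ is circular as written. What \emph{is} true is that $(-\Laplace)^{-1}K\phi_n\to 0$ in $X$; proving this requires a tail estimate for the Newton potential (split the $y$-integral into $\abs{y}\leq R_1$ and $\abs{y}>R_1$, use the uniform envelope $\abs{K\phi_n(y)}\leq C\jp{y}^{-(N+2)}$ on the far part, and local convergence plus integrability on the near part). This is exactly the substance of the paper's compactness Step~3 and is the missing ingredient in your argument.
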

\begin{proof}
First of all, let us define the linear subspace of $X$,
\begin{equation*}
	\D:=\setc*{\varphi\in X}{\int_{\R^N}{\varphi}{H_j}=0,\quad j=0,1,\ldots,N}.
\end{equation*}

%Notice that $\phi\in X$ solves \eqref{lin:nonhmgns} if and only if $\phi\in \D$ satisfies
%\begin{align}
%-\Laplace\phi =\alpha p\Riesz{U^{p-1}\phi}{U^{p-1}}
%+\alpha\qty(p-1)\Riesz{U^{p}}{U^{p-2}}{\phi}+g.
%\end{align}
%\begin{align}
%\label{lin:2:possion}
%\begin{dcases}
%	-\Laplace\phi =\alpha p\Riesz{U^{p-1}\phi}{U^{p-1}}
%	+\alpha\qty(p-1)\Riesz{U^{p}}{U^{p-2}}{\phi}+g,&
%	\\
%	\int_{\R^N}{\phi}{H_j}=0,\quad 	j=0,1,\ldots,N,
%\end{dcases}	
%\end{align}

\textbf{Step 1.} We claim that, for any $\phi\in\D$, 
\begin{equation}
\label{temp:05}
\alpha p\Riesz{U^{p-1}\phi}{U^{p-1}}
	+\alpha\qty(p-1)\Riesz{U^{p}}{U^{p-2}}{\phi}\in Y,
\end{equation}
and
\begin{align}
\label{temp:06}
\int_{\R^N}\qty(\alpha p\Riesz{U^{p-1}\phi}{U^{p-1}}
+\alpha\qty(p-1)\Riesz{U^{p}}{U^{p-2}}{\phi})Z_j =0,	
\end{align}
where $j=0,1,\ldots,N$.

Indeed, for any $\phi\in \D$, we have
$
	\abs{\phi\qty(x)}\leq \frac{1}{\jp{x}^{N-2}}\norm{\phi}_X
$,
 and therefore,
\begin{multline}
\label{temp:3}
\abs{\Riesz{U^{p-1}\phi}\qty(x){U^{p-1}}\qty(x)}
+
\abs{\Riesz{U^{p}}\qty(x){U^{p-2}\qty(x)}{\phi}\qty(x)}
\\
\leq 
\norm{\phi}_X\int_{\R^N}
\frac{1}{\abs{x-y}^{\lambda}}\frac{1}{\jp{y}^{2N-\lambda}}\odif{y}
\frac{1}{\jp{x}^{N+2-\lambda}}.
\end{multline}
Using \Cref{lem:decay}, we derive from \eqref{temp:3} that
\begin{align*}
\abs{\Riesz{U^{p-1}\phi}\qty(x){U^{p-1}}\qty(x)}
+
\abs{\Riesz{U^{p}}\qty(x){U^{p-2}\qty(x)}{\phi}\qty(x)}
\leq C \norm{\phi}_X\frac{1}{\jp{x}^{N+2}},
\end{align*}
which implies that \eqref{temp:05} holds.
	
Now, by the Fubini theorem, we have 
\begin{align*}
	&\int_{\R^N}
	\qty(\alpha p\Riesz{U^{p-1}\phi}{U^{p-1}}
	+\alpha\qty(p-1)\Riesz{U^{p}}{U^{p-2}}{\phi})Z_j
	\\
	=&
	\int_{\R^N}
	\qty(\alpha p\Riesz{U^{p-1}Z_j}{U^{p-1}}
	+\alpha\qty(p-1)\Riesz{U^{p}}{U^{p-2}}{Z_j})\phi,
\end{align*}
which, together with the fact that ${\mathscr{L}Z_j}=0$, implies that,
\begin{align}
	\label{temp:2}
	\int_{\R^N}
	\qty(\alpha p\Riesz{U^{p-1}\phi}{U^{p-1}}
	+\alpha\qty(p-1)\Riesz{U^{p}}{U^{p-2}}{\phi})Z_j
	=
	-\int_{\R^N}\Laplace{Z_j}\phi.
\end{align}
By making use of \eqref{H2Z}, we obtain, from \eqref{temp:2}, that
\begin{align*}
	\int_{\R^N}
	\qty(\alpha p\Riesz{U^{p-1}\phi}{U^{p-1}}
	+\alpha\qty(p-1)\Riesz{U^{p}}{U^{p-2}}{\phi})Z_j
	=
	N\qty(N+2)\int_{\R^N}\phi H_j,
\end{align*}
which, together with $\phi\in\D$, yields that 
\eqref{temp:06} holds.

\textbf{Step 2.} Since $g\in Y$ satisfies \eqref{g:othgn}, 
by \eqref{temp:05} and \eqref{temp:06}, using \Cref{coro:possion}, we obtain that $\phi$ solves the problem \eqref{lin:nonhmgns} if and only if 
$\phi\in\D$ satisfies
\begin{align}
\label{temp:10}
\phi\qty(x)
=\qty(\A\phi)\qty(x)+\qty(\N g)\qty(x),
\end{align} 
where $\A:\D\mapsto\D$ is given by
\begin{align}
\label{A}
\notag
\qty(\A\phi)\qty(x)
=
&
\frac{\alpha p}{N\qty(N-2)\omega_N}\int_{\R^N}
\frac{U^{p-1}\qty(y)}{\abs{x-y}^{N-2}}
\qty(
\int_{\R^N}\frac{U^{p-1}\qty(z)\phi\qty(z)}{\abs{y-z}^{\lambda}}
\odif{z}
)\odif{y}
\\
&+
\frac{\alpha \qty(p-1)}{N\qty(N-2)\omega_N}\int_{\R^N}
\frac{U^{p-2}\qty(y)\phi\qty(y)}{\abs{x-y}^{N-2}}
\qty(
\int_{\R^N}\frac{U^{p}\qty(z)}{\abs{y-z}^{\lambda}}
\odif{z}
)\odif{y},
\end{align}
and $\N:\setc{g\in Y}{\int_{\R^N} g Z_j=0,\quad j=0,1,\ldots,N}\mapsto \D$ is given by
\begin{align}
\label{Newton}
\qty(\N g)\qty(x)
=\frac{1}{N\qty(N-2)\omega_N}\int_{\R^N}	
\frac{g\qty(y)}{\abs{x-y}^{N-2}}\odif{y}.
\end{align}

\textbf{Step 3.} We claim that the operator $\A:\D\mapsto\D$ is compact.

Let $\{\phi_n\}_{n=1}^{\infty}$ be a bounded sequence in $\D$, i.e.
there exists a positive constant $C$ such that
\begin{equation}
\label{temp:1}
\norm{{\phi_n}}_X\leq C,\qq{for any} n\in\mathbb{N}.
\end{equation}

Notice that, by \eqref{A}, we have
\begin{align*}
-{\Laplace\qty({\A\phi_n})}
=\alpha p\Riesz{U^{p-1}\phi_n}{U^{p-1}}
+\alpha\qty(p-1)\Riesz{U^{p}}{U^{p-2}}{\phi_n},	
\end{align*}
which, together with the fact that $\phi_n\in C\qty(\R^N)$,  implies that ${\A\phi_n}\in C^1\qty(\R^N)$, and
\begin{align}
\label{C1bd}
\abs{\fct{\grad\qty({\A\phi_n})}{x}}
\leq &
C\norm{{\phi_n}}_X,	
\end{align}
where we used \Cref{lem:decay} and the fact that $\abs{\phi_n\qty(x)}\leq\frac{\norm{\phi_n}_X}{\jp{x}^{N-2}}$
in the above inequality.
Moreover, by making use of a similar argument as that in Step 1, we obtain that
\begin{align}
\label{temp:07}
	\norm{\A\phi_n}_{X}\leq C\norm{\phi_n}_X.
\end{align}

Then, by \eqref{C1bd} and \eqref{temp:07}, we have, 
\begin{equation}
	\norm{{\A\phi_n}}_{\infty}\leq C,
	~~\text{and}~~
\abs{ \A\phi_n\qty(x_2)-\A\phi_n\qty(x_1) }
\leq C \abs{ x_2-x_1 },~~\text{for any}~~x_1,x_2\in\R^N.	
\end{equation}
Hence, by the Arzel\'{a}–Ascoli theorem, there exist
a subsequence (still denoted by $\phi_n$) and a function
$\phi_0\in C\qty(\R^N)$ such that
\begin{align}
\label{temp:4}
\A\phi_n\qty(x)\rightrightarrows \phi_0\qty(x),
\qq{ uniformly on any compact subset of} \R^N.
\end{align}

By \eqref{temp:07}, we deduce that
\begin{align}
\label{temp:6}
\abs{\A\phi_n\qty(x)}\leq \frac{C}{\jp{x}^{N-2}},
\qq{for any} x\in\R^N,
\end{align}
which, toghether with \eqref{temp:4}, implies that
\begin{align}
\label{temp:7}
\abs{\phi_0\qty(x)}\leq \frac{C}{\jp{x}^{N-2}},
\qq{for any} x\in\R^N.
\end{align}

Next, let $R>0$ be an arbitrary number. By \eqref{temp:6} and \eqref{temp:7}, we have
\begin{align}
\label{temp:8}
\norm{{\A\phi_n}-\phi_0}_{X}
\leq 
\max_{\overline{B}\qty(0,R)}\abs{{\A\phi_n}\qty(x)-\phi_0\qty(x)}
+
\frac{C}{\jp{R}^{N-2}}.
\end{align}
By making use of \eqref{temp:4}, from \eqref{temp:8}, we deduce that
\begin{align}
\lim_{n\to\infty}\norm{{\A\phi_n}-\phi_0}_{X}
\leq \frac{C}{\jp{R}^{N-2}},
\end{align}
which, by letting $R\to +\infty$, yields $\lim_{n\to\infty}\norm{{\A\phi_n}-\phi_0}_{X}=0$. 
Therefore, $\A$ is compact.

\textbf{Step 4.} We claim that, if $\phi\in \D$ satisfies $\phi=\A\phi$, then $\phi\qty(x)\equiv 0$.

Indeed, for any $\phi\in \D$ with $\phi=\A\phi$, we
have
\begin{align}
\label{temp:9}
	-{\Laplace\phi}
	=\alpha p\Riesz{U^{p-1}\phi}{U^{p-1}}
	+\alpha\qty(p-1)\Riesz{U^{p}}{U^{p-2}}{\phi}.	
\end{align}
However, by \Cref{prop:nondeg}, there exist constants $c_0,c_1,\ldots,c_N$, such that
\begin{align}
	\phi =\sum_{j=0}^N c_j Z_j,
\end{align}
which, toghether with $\phi\in\D$, implies that $\phi\qty(x)\equiv 0$.

\textbf{Step 5.} By the Fredholm alternative theorem, we get, 
for any $g\in Y$ with $\int_{\R^N} g Z_j=0~\qty(j=0,1,\ldots,N)$,
there exists a unique $\phi$ such that \eqref{temp:10} holds. Moreover, we have
\begin{align}
	\norm{\phi}_X\leq C\norm{\N g}_{X},
\end{align}
which, toghether with \eqref{est:possion}, implise that \eqref{g2phi:bd} holds. This ends the proof of \Cref{lem:linsltn}.
\end{proof}	
%\begin{align}
%	\varphi 
%\end{align}
%\begin{equation}
%\phi=\A\qty[\phi],
%\end{equation}
%we have
%\begin{equation}
%	\Lscr\phi=0.
%\end{equation}
%Therefore, $\phi=0.$
%Fredholm implies, for any $g$ with $\int_{\R^N} g Z_j=0$, $j=0,1,\ldots,N$, there exists a unique $\phi\in \D$ such that 
%\begin{align}
%	content...
%\end{align} 
%\begin{equation}
%	\int_{\R^N} g Z_j=0,
%\end{equation}
%there exists a unique $\phi $ such that \eqref{lin:nonhmgns} holds. Moreover, \eqref{g2phi:bd} holds.

\begin{coro}
\label{coro:linsltn}
For any $g\in Y$, there exists a unique $\phi\in X$ solves,
\begin{align}
	\label{lin:nonhmgns:nonothgn}
	\begin{dcases}
		\Lscr\phi = g-\sum_{j=0}^{N}\dfrac{\displaystyle \int_{\R^N} g Z_j }{\displaystyle\int_{\R^N} H_j Z_j } H_j,&
		\\
		\int_{\R^N}{\phi}{H_j}=0,&\qq{where}j=0,1,\ldots,N.
	\end{dcases}		
\end{align}
Moreover, there exists a constant $C>0$ such that,
\begin{equation}
	\label{g2phi:bd:nonothgn}
	\norm{{\phi}}_X\leq C \norm{g}_{Y}.
\end{equation}		
\end{coro}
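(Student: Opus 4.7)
The natural approach is to absorb the orthogonality defect of $g$ into a finite-dimensional correction and then invoke Lemma~\ref{lem:linsltn}. Precisely, I would set
$$\tilde{g} := g - \sum_{j=0}^N c_j H_j, \qquad c_j := \frac{\int_{\R^N} g Z_j}{\int_{\R^N} H_j Z_j},$$
and show that (i) $\tilde{g}\in Y$ with $\|\tilde g\|_Y\leq C\|g\|_Y$, and (ii) $\int_{\R^N} \tilde{g} Z_k = 0$ for each $k=0,\dots,N$. Granting these, Lemma~\ref{lem:linsltn} produces a unique $\phi\in X$ solving $\Lscr\phi=\tilde{g}$ with the required orthogonality, and the norm bound $\|\phi\|_X\leq C\|\tilde g\|_Y\leq C\|g\|_Y$ yields \eqref{g2phi:bd:nonothgn}.

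For (i), I first check that $H_j\in Y$: since $U(x)=1/\jp{x}^{N-2}$ and $|Z_j(x)|\leq C/\jp{x}^{N-2}$ by \eqref{Zj}, the product $H_j=U^{4/(N-2)}Z_j$ decays at rate $1/\jp{x}^{N+2}$. Furthermore, $|gZ_j|\leq C\|g\|_Y/\jp{x}^{2N}$ is integrable, so the coefficients $c_j$ are well-defined (once the denominators are known to be nonzero, see below) and satisfy $|c_j|\leq C\|g\|_Y$, which together with the uniform bound on $\|H_j\|_Y$ yields the desired estimate on $\|\tilde g\|_Y$.

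The key point is (ii), which hinges on the claim that the matrix $M_{jk}:=\int_{\R^N} H_jZ_k=\int_{\R^N} U^{4/(N-2)}Z_jZ_k$ is diagonal with nonzero diagonal entries. Indeed, $U^{4/(N-2)}$ is radial, $Z_0$ is radial, and each $Z_j$ ($1\leq j\leq N$) carries the antisymmetric angular factor $x_j$ (see \eqref{Zj}). An elementary parity argument therefore gives $M_{jk}=0$ whenever $j\neq k$, while $M_{jj}=\int U^{4/(N-2)}Z_j^2>0$. Consequently $\int\tilde{g}Z_k=\int gZ_k-c_kM_{kk}=0$, and the non-degeneracy of $M$ legitimates the definition of the $c_j$.

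Uniqueness in $X$ is then straightforward: if $\phi_1,\phi_2\in X$ both solve \eqref{lin:nonhmgns:nonothgn}, their difference satisfies $\Lscr(\phi_1-\phi_2)=0$ with vanishing integral against each $H_k$, and Lemma~\ref{lem:linsltn} applied to the zero right-hand side forces $\phi_1=\phi_2$. I do not anticipate a serious obstacle: diagonalization of $M$ and the $Y$-decay of $H_j$ follow directly from the explicit formulas for $U$ and $Z_j$; the only piece of technical bookkeeping is verifying integrability with the weights from $X$ and $Y$, which is controlled by $|g(x)|\leq\|g\|_Y/\jp{x}^{N+2}$ together with the polynomial decay of the $Z_j$.
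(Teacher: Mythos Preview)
Your proposal is correct and follows essentially the same approach as the paper's proof: reduce to Lemma~\ref{lem:linsltn} by showing the modified right-hand side $\tilde g$ lies in $Y$, is orthogonal to all $Z_k$, and satisfies $\|\tilde g\|_Y\leq C\|g\|_Y$. The only difference is cosmetic: the paper tacitly invokes the diagonality $\int_{\R^N} H_j Z_k=0$ for $j\neq k$ (recorded in Lemma~\ref{lem:identities}) rather than re-deriving it via the parity argument you sketch.
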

\begin{proof}
By \Cref{lem:linsltn}, we deduce that, there exists
a unique $\phi\in X$ to problem \eqref{lin:nonhmgns:nonothgn} satisfying 
\begin{align}
\label{temp:11}
\norm{\phi}_X\leq C\norm{g-\sum_{j=0}^{N}\dfrac{\displaystyle \int_{\R^N} g Z_j }{\displaystyle\int_{\R^N} H_j Z_j } H_j}_Y.
\end{align} 

Now, notice that, for any $g\in Y$, we have $\abs{g\qty(x)}\leq \frac{1}{\jp{x}^{N+2}}\norm{g}_Y$, and therefore, 
\begin{align}
\abs{ \int_{\R^N} g Z_j }
\leq 
C\norm{g}_{Y}
\int_{\R^N}\frac{1}{\jp{x}^{2N}}\odif{x}
\leq 
C\norm{g}_{Y},
\end{align}
which, together with \eqref{temp:11}, implies that \eqref{g2phi:bd:nonothgn} holds.
\end{proof}

Now, we are able to prove \Cref{prop:linsltn:prmt}.
\begin{proof}[Proof of \Cref{prop:linsltn:prmt}]
Let $\qty(\mu,\xi)\in\qty(0,+\infty)\times\R^N$, and $g\in Y_{\mu,\xi}$ be fixed. 

Recall 
$
g_{;-\frac{\xi}{\mu}}^{\frac{1}{\mu}}\qty(x)=\mu^{\frac{N+2}{2}} g\qty(\mu x+\xi)
$,
it is easy to verify that $g_{;-\frac{\xi}{\mu}}^{\frac{1}{\mu}}\in Y$.
We show that
\begin{claim}
\label{clm:1} The function $\phi\in X_{\mu,\xi}$ solves
the problem
\begin{align}
	\label{temp:12}
	\begin{dcases}
		{\mathscr{L_{\mu,\xi}}\phi} = g-
		\sum_{j=0}^{N}
		\frac{\displaystyle\int_{\R^N}gZ_{j;\mu,\xi}}{\displaystyle\int_{\R^N}H_jZ_j}H_{j;\xi}^{\mu},
		\\
		\int_{\R^N}{\phi}{H_{j;\xi}^{\mu}}=0,\quad j=0,1,\ldots,N,
	\end{dcases}				
\end{align}
if and only if
the function $\phi_{;\frac{1}{\mu},-\frac{\xi}{\mu}}\in X$ solves 
the problem
\begin{align}
	\label{temp:15}
	\begin{dcases}
		\Lscr\phi_{;\frac{1}{\mu},-\frac{\xi}{\mu}} = g_{;-\frac{\xi}{\mu}}^{\frac{1}{\mu}}\qty(x)-\sum_{j=0}^{N}\dfrac{\displaystyle \int_{\R^N} g_{;-\frac{\xi}{\mu}}^{\frac{1}{\mu}}\qty(x) Z_j }{\displaystyle\int_{\R^N} H_j Z_j } H_j,
		\\
		\int_{\R^N}{\phi}{H_j}=0,\quad j=0,1,\ldots,N,
	\end{dcases}			
\end{align}	
where
\begin{align}
\label{temp:21}
\phi_{;\frac{1}{\mu},-\frac{\xi}{\mu}}\qty(x)=\mu^{\frac{N-2}{2}}\phi\qty(\mu x+\xi),
\qq{and}
g_{;-\frac{\xi}{\mu}}^{\frac{1}{\mu}}\qty(x)=\mu^{\frac{N+2}{2}} g\qty(\mu x+\xi).	
\end{align}
\end{claim}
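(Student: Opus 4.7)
The plan is to verify directly that the change of variables $x \mapsto \mu x + \xi$, combined with the prescribed rescaling of amplitudes, intertwines the operator $\mathscr{L}_{\mu,\xi}$ with $\mathscr{L}$ and sends every auxiliary object ($Z_{j;\mu,\xi}$, $H_{j;\xi}^{\mu}$, the integrals of orthogonality, and the norms) into its unscaled counterpart. Since the map $\phi \mapsto \phi_{;\frac{1}{\mu},-\frac{\xi}{\mu}}$ is a bijection between $X_{\mu,\xi}$ and $X$ (one checks that $\norm{\phi_{;\frac{1}{\mu},-\frac{\xi}{\mu}}}_X = \norm{\phi}_{X_{\mu,\xi}}$ straight from the two definitions of the norms), once the equations are shown to be equivalent under the rescaling the claim is immediate.

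The key step is the identity
\begin{align*}
\qty(\mathscr{L}_{\mu,\xi}\phi)(\mu x + \xi) = \mu^{-\frac{N+2}{2}}\qty(\mathscr{L}\,\phi_{;\frac{1}{\mu},-\frac{\xi}{\mu}})(x),
\end{align*}
which I would prove term by term. For the Laplacian piece the chain rule gives $(-\Laplace\phi)(\mu x+\xi) = -\mu^{-2}\Laplace_x[\phi(\mu x+\xi)]$, and after multiplying by the correct power of $\mu$ one recovers $-\Laplace\phi_{;\frac{1}{\mu},-\frac{\xi}{\mu}}$. For the two Riesz-potential pieces one changes variables $y = \mu y' + \xi$ inside the integrals defining $\mathcal{I}_\lambda$; the bookkeeping to remember is that $U_{\mu,\xi}(\mu x + \xi) = \mu^{-\frac{N-2}{2}} U(x)$ and $|\mu x - \mu y'|^{-\lambda} = \mu^{-\lambda}|x-y'|^{-\lambda}$, while the Jacobian contributes $\mu^N$. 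Adding up the weights $-(N-2)(p-1)/2 - (N-2)/2 - \lambda + N$ for each nonlocal term produces exactly the global factor $\mu^{-(N+2)/2}$ that matches the Laplacian contribution after the amplitude rescaling.

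The remaining verifications are direct substitutions. The ansatz functions obey $Z_{j;\mu,\xi}(\mu x+\xi) = \mu^{-\frac{N-2}{2}} Z_j(x)$ and $H_{j;\xi}^{\mu}(\mu x+\xi) = \mu^{-\frac{N+2}{2}} H_j(x)$, so the change of variables $y=\mu x+\xi$ in $\int_{\R^N} g\,Z_{j;\mu,\xi}$ yields $\int_{\R^N} g_{;-\xi/\mu}^{1/\mu}\,Z_j$ (the Jacobian $\mu^N$ against the combined weight $\mu^{-(N+2)/2}\mu^{-(N-2)/2} = \mu^{-N}$ cancels exactly), and similarly for $\int_{\R^N}\phi\,H_{j;\xi}^{\mu}$, which transforms into $\int_{\R^N}\phi_{;\frac{1}{\mu},-\frac{\xi}{\mu}}\,H_j$. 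Hence the orthogonality conditions and the source term with the Lagrange-multiplier correction transform consistently, and the two systems \eqref{temp:12} and \eqref{temp:15} become the same after the substitution.

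The main (though purely technical) obstacle is to keep track of all the powers of $\mu$ in the three terms of $\mathscr{L}_{\mu,\xi}$ simultaneously: the Laplacian contributes $\mu^{-2}$ from the derivatives, while each Riesz-potential term contributes $\mu^{-\lambda+N}$ from the integral and $\mu^{-\frac{(N-2)(2p-2)}{2}}$ from the $U_{\mu,\xi}$ factors. Using $p = (2N-\lambda)/(N-2)$ collapses all three into the same homogeneity, and one checks that this is exactly the scaling weight needed to recover $\mathscr{L}$ acting on $\phi_{;\frac{1}{\mu},-\frac{\xi}{\mu}}$, after which the equivalence of \eqref{temp:12} and \eqref{temp:15} follows.
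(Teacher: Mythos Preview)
Your proposal is correct and follows essentially the same approach as the paper: both arguments verify term by term that the rescaling $x\mapsto \mu x+\xi$ intertwines $\mathscr{L}_{\mu,\xi}$ with $\mathscr{L}$ (the paper's \eqref{temp:16}--\eqref{temp:18} correspond to your Laplacian and Riesz-potential computations), and then check separately that the orthogonality integrals and the Lagrange-multiplier source transform consistently (the paper's \eqref{temp:19}--\eqref{temp:20} and \eqref{temp:23}). Your presentation is slightly more conceptual in emphasizing the homogeneity bookkeeping, but the underlying computation is identical.
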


We postpone the proof of \Cref{clm:1}. 
Since $g_{;-\frac{\xi}{\mu}}^{\frac{1}{\mu}}\in Y$, by \Cref{coro:linsltn}, there exists a unique function $\phi_{;\frac{1}{\mu},-\frac{\xi}{\mu}}\in X$ solving \eqref{temp:15} and satisfying 
\begin{align}
\label{temp:24}
	\norm{ \phi_{;\frac{1}{\mu},-\frac{\xi}{\mu}}}_X\leq
	C\norm{g_{;-\frac{\xi}{\mu}}^{\frac{1}{\mu}}}_{Y}. 
\end{align}
Therefore, by \Cref{clm:1}, the problem \eqref{temp:12} admits a unique solution $\phi\in X_{\mu,\xi}$. Moreover, it is easy to check that
\begin{align}
\label{temp:25}
	\norm{\phi}_{X_{\mu,\xi}}\leq C \norm{g}_{Y_{\mu,\xi}},
\end{align}
where the constant $C$ appeared in \eqref{temp:25} is the same as that appeared in \eqref{temp:24}. Hence \eqref{g2phi:prmt:est} holds.

The rest of the proof is due to show that \Cref{clm:1} holds.

Firstly, we easily verify that
\begin{gather}
\label{temp:16}
	-\Laplace\phi\qty(x)
	=
	-\frac{1}{\mu^{\frac{N+2}{2}}}
	\Laplace\phi_{;\frac{1}{\mu},-\frac{\xi}{\mu}}\qty(\frac{x-\xi}{\mu}).
\end{gather}

Secondly, by making a change of variables, we get
\begin{align}
\label{temp:17}
\Riesz{U_{\mu,\xi}^{p-1}\phi}\qty(x)U_{\mu,\xi}^{p-1}\qty(x)
%=&\int_{\R^N}\frac{1}{\abs{x-y}^{\lambda}}U_{\mu,\xi}^{p-1}\qty(y)\phi\qty(y)\odif{y}U_{\mu,\xi}^{p-1}\qty(x)
%\\
%=&\frac{1}{\mu^{N-\frac{\lambda}{2}}}\int_{\R^N}
%\frac{1}{\abs{x-y}^{\lambda}}\qty(U^{p-1}\phi\qty[{\mu,\xi}])\qty(\frac{y-\xi}{\mu})\odif{y}
%\\
%\notag
%=&
%\frac{1}{\mu^{\frac{\lambda}{2}}}
%\int_{\R^N}
%\frac{1}{\abs{\frac{x-\xi}{\mu}-y}^{\lambda}}\qty(U^{p-1}\phi\qty[{\mu,\xi}])\qty(y)\odif{y}U_{\mu,\xi}^{p-1}\qty(x)
%\\
=&
\frac{1}{\mu^{\frac{N+2}{2}}}\Riesz{U^{p-1}\phi_{;\frac{1}{\mu},-\frac{\xi}{\mu}} }\qty({\frac{x-\xi}{\mu}})U^{p-1}\qty({\frac{x-\xi}{\mu}}),
\end{align}
and
\begin{align}
\label{temp:18}
\Riesz{U_{\mu,\xi}^p}\qty(x)U_{\mu,\xi}^{p-2}\qty(x)\phi\qty(x)
%=&\int_{\R^N}\frac{1}{\abs{x-y}^{\lambda}}U_{\mu,\xi}^{p}\qty(y)\odif{y}U_{\mu,\xi}^{p-2}\qty(x)\phi\qty(x)
%\\
%=&\frac{1}{\mu^{N-\frac{\lambda}{2}}}\int_{\R^N}
%\frac{1}{\abs{x-y}^{\lambda}}\qty(U^{p-1}\phi\qty[{\mu,\xi}])\qty(\frac{y-\xi}{\mu})\odif{y}
%\\
%\notag
%=&
%\frac{1}{\mu^{\frac{\lambda}{2}}}
%\int_{\R^N}
%\frac{1}{\abs{\frac{x-\xi}{\mu}-y}^{\lambda}}U^{p}\qty(y)\odif{y}U_{\mu,\xi}^{p-2}\qty(x)\phi\qty(x)
%\\
=&
\frac{1}{\mu^{\frac{N+2}{2}}}\Riesz{U^{p}}\qty({\frac{x-\xi}{\mu}})
U^{p-2}\qty({\frac{x-\xi}{\mu}})\phi_{;\frac{1}{\mu},-\frac{\xi}{\mu}}\qty({\frac{x-\xi}{\mu}}).	
\end{align}

Finally, straightforward computations show that 
\begin{align}
\label{temp:19}
	\int_{\R^N}g Z_{j;\mu,\xi}
	=
	\int_{\R^N}g_{;-\frac{\xi}{\mu}}^{\frac{1}{\mu}}Z_j,
	\qq{} j=0,1,\ldots,N,
\end{align}
and
\begin{align}
\label{temp:20}
	\int_{\R^N}\phi H_{j;\xi}^{\mu}
	=
	\int_{\R^N}\phi_{;\frac{1}{\mu},-\frac{\xi}{\mu}}H_j,
	\qq{} j=0,1,\ldots,N.
\end{align}

On the one hand, by combining \eqref{temp:16} with \eqref{temp:17} and \eqref{temp:18},
we get
\begin{align}
\label{temp:22}
	{\mathscr{L_{\mu,\xi}}\phi}\qty(x)
	=
	\frac{1}{\mu^{\frac{N+2}{2}}}{\mathscr{L}\phi_{;\frac{1}{\mu},-\frac{\xi}{\mu}}}\qty({\frac{x-\xi}{\mu}}).
\end{align}
On the other hand, we deduce, from \eqref{temp:19}, that
\begin{multline}
\label{temp:23}
g\qty(x)-
\sum_{j=0}^{N}
\frac{\displaystyle\int_{\R^N}gZ_{j;\mu,\xi}}{\displaystyle\int_{\R^N}H_jZ_j}H_{j;\xi}^{\mu}\qty(x)
\\
=
\frac{1}{\mu^{\frac{N+2}{2}}}
\qty(
g_{;-\frac{\xi}{\mu}}^{\frac{1}{\mu}}\qty({\frac{x-\xi}{\mu}})
-
\sum_{j=0}^{N}
\frac{\displaystyle\int_{\R^N}g_{;-\frac{\xi}{\mu}}^{\frac{1}{\mu}} Z_{j}}{\displaystyle\int_{\R^N}H_jZ_j}H_{j}
\qty({\frac{x-\xi}{\mu}})
).
\end{multline}
By combining \eqref{temp:22} and \eqref{temp:23} with \eqref{temp:20}, we conclude that
$\phi$ solves \eqref{temp:12} if and only if $\phi_{;\frac{1}{\mu},-\frac{\xi}{\mu}}$ solves \eqref{temp:15}. This ends the proof of \Cref{clm:1}. 

Hence, we complete the proof of \Cref{prop:linsltn:prmt}.
\end{proof}

We end this section with analysing the differentiability properties of the linear operator $\T_{\mu,\xi}$ with respect to $\qty(\mu,\xi)$, where $\T_{\mu,\xi}$ is given by \Cref{prop:linsltn:prmt}. More precisely, we have:
\begin{lemm}
\label{lemm:C1}
Let $\T\qty[{\mu,\xi}]:Y_{\mu,\xi}\mapsto X_{\mu,\xi}$ be given by \Cref{prop:linsltn:prmt}. Then the map $\qty(\mu,\xi)\mapsto \T\qty[{\mu,\xi}]$ is continuously differentiable. Moreover, for any $g\in Y_{\mu,\xi}$, we have
\begin{equation}
	\norm{\frac{\partial\T_{\mu,\xi}}{\partial\mu}g}_{X_{\mu,\xi}}
	+
\sum_{j=1}^N \norm{\frac{\partial\T_{\mu,\xi}}{\partial\xi_j}g}_{X_{\mu,\xi}}
	\leq \frac{C}{\mu}
	\norm{g}_{Y_{\mu,\xi}}.	
\end{equation}	
%Let $g\in Y_{\mu,\xi}$ and 
%$\phi\qty[\mu,\xi]=\T\qty[{\mu,\xi}]\qty(g)$ be the solution
%Let $\T\qty[{\mu,\xi}]:Y_{\mu,\xi}\mapsto X_{\mu,\xi}$ be given by 
%The map
%\begin{equation}
%	\qty(\mu,\xi)\mapsto \T_{\mu,\xi}
%	:\qty(0,+\infty)\times\R^N\mapsto 
%	\mathcal{L}\qty(Y_{\mu,\xi},X_{\mu,\xi})
%\end{equation}	
%is continuously.
%\begin{equation}
%	\frac{\partial\T_{\mu,\xi}}{\partial\mu}
%	\qq{and}
%	\frac{\partial\T_{\mu,\xi}}{\partial\xi_{j}}
%\end{equation}
%\begin{equation}
%	\norm{\frac{\partial\T_{\mu,\xi}}{\partial\mu}\qty[g]}_{X_{\mu,\xi}}
%	\leq C
%	\norm{g}_{Y_{\mu,\xi}}.	
%\end{equation}
\end{lemm}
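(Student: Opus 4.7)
The plan is to establish Fréchet differentiability of $\qty(\mu,\xi)\mapsto \T\qty[\mu,\xi]$ by implicit differentiation of the projected linear system that defines it. Fix a $g$ which lies in $Y_{\mu,\xi}$ for every $\qty(\mu,\xi)$ in a small neighbourhood of a reference point, and set $\phi := \T\qty[\mu,\xi]g \in X_{\mu,\xi}$, so that
\[
\mathscr{L}_{\mu,\xi}\phi = g - \sum_{j=0}^{N} c_j\qty(\mu,\xi)\, H^\mu_{j;\xi},\qquad \int_{\R^N}\phi\, H^\mu_{j;\xi}=0\quad (j=0,1,\dots,N),
\]
with $c_j\qty(\mu,\xi)=\qty(\int_{\R^N}gZ_{j;\mu,\xi})/\qty(\int_{\R^N}H_jZ_j)$. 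The crucial structural observation is that formal differentiation in $\mu$ (or $\xi_k$) of this system throws all derivatives onto the explicit smooth weights $U_{\mu,\xi}$, $H^\mu_{j;\xi}$, $Z_{j;\mu,\xi}$ and never onto $g$, so the differentiated right-hand side automatically lies in $Y_{\mu,\xi}$ with norm controlled by $\norm{g}_{Y_{\mu,\xi}}$.

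First, I would construct candidate derivatives. Formal differentiation suggests that $\psi_\mu$, the natural candidate for $\partial_\mu \phi$, should solve
\[
\mathscr{L}_{\mu,\xi}\psi_\mu = -\qty(\partial_\mu \mathscr{L}_{\mu,\xi})\phi - \sum_j \qty(\partial_\mu c_j)\, H^\mu_{j;\xi} - \sum_j c_j\,\partial_\mu H^\mu_{j;\xi},\qquad \int \psi_\mu\, H^\mu_{j;\xi}=-\int \phi\,\partial_\mu H^\mu_{j;\xi},
\]
and analogously for $\psi_{\xi_k}$. Existence is obtained by writing $\psi_\mu = \T\qty[\mu,\xi]\widetilde{g}_\mu+\sum_j \beta_{\mu,j} Z_{j;\mu,\xi}$, where $\widetilde{g}_\mu\in Y_{\mu,\xi}$ is the source obtained after subtracting the $H^\mu_{j;\xi}$ projections and the scalars $\beta_{\mu,j}\in\R$ are chosen to meet the non-homogeneous orthogonality conditions; the linear system for $\qty(\beta_{\mu,j})$ is invertible since its matrix is diagonal with entries $\int H_jZ_j\neq 0$. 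Pointwise estimates on $\partial_\mu U_{\mu,\xi}$, $\partial_\mu H^\mu_{j;\xi}$ and $\partial_\mu Z_{j;\mu,\xi}$, each carrying an extra factor $\mu^{-1}$ from the underlying dilation, combined with the uniform bound $\norm{\T\qty[\mu,\xi]}_{Y_{\mu,\xi}\to X_{\mu,\xi}}\leq C_0$ from \Cref{prop:linsltn:prmt}, yield
\[
\norm{\psi_\mu}_{X_{\mu,\xi}}+\sum_{k=1}^N\norm{\psi_{\xi_k}}_{X_{\mu,\xi}}\leq \frac{C}{\mu}\norm{g}_{Y_{\mu,\xi}}.
\]

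Next, I would verify that $\psi_\mu$ is truly the Fréchet derivative. For small $h$ set $\phi_h := \T\qty[\mu+h,\xi]g$ and $r_h := h^{-1}\qty(\phi_h-\phi)-\psi_\mu$. Subtracting the defining equations for $\phi_h$, $\phi$ and $\psi_\mu$ shows that $r_h$ solves a projected linear equation whose inhomogeneity is the $h^{-1}$-rescaled second-order Taylor remainder of the families $\mu\mapsto \mathscr{L}_{\mu,\xi}$, $\mu\mapsto H^\mu_{j;\xi}$ and $\mu\mapsto c_j\qty(\mu,\xi)$, each of which is $\o\qty(1)$ as $h\to 0$ in the relevant norm ($Y_{\mu,\xi}$ for source terms, $\R$ for scalar coefficients). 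Applying \Cref{prop:linsltn:prmt} once more gives $\norm{r_h}_{X_{\mu,\xi}}\to 0$. The same argument with $\xi_k$ in place of $\mu$ handles the spatial partials and preserves the prefactor $\mu^{-1}$ (since $\xi$ enters every rescaled quantity divided by $\mu$), and a parallel Taylor estimate applied to the difference of the derivative expressions at nearby parameters delivers continuity of the derivatives and hence the $C^1$ conclusion.

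The main obstacle will be the second-order Taylor remainder estimate for $\mathscr{L}_{\mu,\xi}$: one must prove
\[
\frac{1}{h}\qty(\mathscr{L}_{\mu+h,\xi}-\mathscr{L}_{\mu,\xi})\phi - \qty(\partial_\mu \mathscr{L}_{\mu,\xi})\phi \longrightarrow 0 \text{ in } Y_{\mu,\xi}
\]
uniformly for $\norm{\phi}_{X_{\mu,\xi}}\leq 1$. Because $\mathscr{L}_{\mu,\xi}$ consists of $-\Laplace$ plus two nonlocal terms built from $U_{\mu,\xi}^{p-1}$ and $U_{\mu,\xi}^{p-2}$ via Riesz potentials, this reduces to pointwise control of $\partial_\mu^2 U_{\mu,\xi}^{p-1}$ and $\partial_\mu^2 U_{\mu,\xi}^{p-2}$ against the weight $\jp{\qty(x-\xi)/\mu}^{-\qty(N+2)}$, which is handled by elementary differentiation combined with the Riesz-potential decay estimates already used in the proof of \Cref{lem:linsltn}.
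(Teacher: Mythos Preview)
Your proposal is correct and follows essentially the same route as the paper: construct the candidate derivative as $\T\qty[\mu,\xi]$ applied to the formally differentiated source, add a correction in $\mathrm{span}\{Z_{j;\mu,\xi}\}$ to meet the non-homogeneous orthogonality constraints, and then verify by second-order Taylor remainder estimates on the difference quotient. One small point to watch when you write it out: your remainder $r_h$ does not automatically satisfy $\int_{\R^N} r_h\, H^\mu_{j;\xi}=0$ (because $\phi_h$ is orthogonal to $H^{\mu+h}_{j;\xi}$, not to $H^\mu_{j;\xi}$), so before invoking \Cref{prop:linsltn:prmt} you must split off and separately estimate the $Z_{j;\mu,\xi}$-component of $r_h$ --- the paper does exactly this via an explicit $\Pi_{\mu_0,\xi_0}/\Pi_{\mu_0,\xi_0}^\perp$ decomposition.
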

Since the proof of \Cref{lemm:C1} follows from tedious calculations, we postpone it to \Cref{app:C1}.

\section{Nonlinear projected problem}
\label{sec:nlprjct}
In this section, via the linear theory developed in \Cref{prop:linsltn:prmt}, by making use of the Banach contraction mapping theory, 
we solve the non-linear problem
\begin{align}
\label{NLP}
\begin{dcases}
{\mathscr{L}_{\mu,{\xi}}\phi } = N\qty(\phi;\mu,\xi)
+\eps E\qty(\phi;\mu,\xi)-\sum_{j=0}^{N}c_{\eps,j}H_{j;\xi}^{\mu},&
	\\
\int_{\R^N}{\phi }H_{j;\mu}^{\xi}=0,\qq{ }j=0,1,\ldots,N,
\end{dcases}				
\end{align}
where $\mathscr{L}_{\mu,{\xi}}$ is given by \eqref{L:prmt},
\begin{align}
\label{N}
\notag
N\qty(\phi;\mu,\xi)
=&
{\alpha}\Riesz{\qty(U_{\mu,\xi}+\phi)_{+}^p}\qty(U_{\mu,\xi}+\phi)_{+}^{p-1}
-
{\alpha}\Riesz{U_{\mu,\xi}^p}U_{\mu,\xi}^{p-1}
\\
&
-
{\alpha}p\Riesz{U_{\mu,\xi}^{p-1}\phi}U_{\mu,\xi}^{p-1}
-
{\alpha}\qty(p-1)\Riesz{U_{\mu,\xi}^{p}}U_{\mu,\xi}^{p-2}\phi,
\end{align}
\begin{align}
	\label{E}
	E\qty(\phi;\mu,\xi) = k\qty(U_{\mu,\xi}+\phi)_{+}^{\frac{N+2}{N-2}},
\end{align}
and
\begin{align}
\label{cj}
	c_{\eps,j}\qty(\mu,\xi)=\frac{1}{\displaystyle\int_{\R^N}H_{j}Z_{j}}
	\int_{\R^N}\qty(N\qty(\phi\qty[\mu,\xi];\mu,\xi)+\eps E\qty(\phi;\mu,\xi))Z_{j;\mu,\xi}.
\end{align}

The main result of this section is the following proposition.
\begin{prop}
\label{prop:nlp}
Let $\qty(\mu,\xi)\in\qty(0,+\infty)\times\R^N$, and the function $k$ satisfy \ref{k:pstv}-\ref{k:reg}. For any $\eps>0$ sufficiently small, the problem \eqref{NLP} admits a unique solution $\bm{\phi} = \bm{\phi}\qty[\mu,\xi]\in X_{\mu,\xi}$ satisfying 
\begin{align}
\label{phi:bd}
	\norm{\bm{\phi}\qty[\mu,\xi]}_{X_{\mu,\xi}}\leq C\norm{k}_{\infty}\eps.
\end{align}
Moreover, the function $\bm{\phi}\qty[\mu,\xi]$ is $C^1$ with respect to $\qty(\mu,\xi)$, and 
\begin{align}
\label{pd:phi:bd}
\norm{\pdv{\bm{\phi}}{\mu}\qty[\mu,\xi]}_{X_{\mu,\xi}}
+
\sum_{j=1}^N\norm{\pdv{\bm{\phi}}{\xi_j}\qty[\mu,\xi]}_{X_{\mu,\xi}}
\leq \frac{C}{\mu}\norm{k}_{\infty}{\eps}.
\end{align} 
\end{prop}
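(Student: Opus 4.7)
The strategy is to cast \eqref{NLP} as a fixed-point equation by inverting $\mathscr{L}_{\mu,\xi}$ through the linear theory of \Cref{prop:linsltn:prmt} and then to apply the Banach contraction principle in a small ball of $X_{\mu,\xi}$. A direct comparison of \eqref{NLP}--\eqref{cj} with the linear equation \eqref{lin:nhmgns:nothgn:prmt} shows that $\bm{\phi}\in X_{\mu,\xi}$ solves \eqref{NLP} if and only if
\begin{equation*}
\bm{\phi}=\F\qty[\mu,\xi]\qty(\bm{\phi})\coloneqq \T\qty[{\mu,\xi}]\qty(N\qty(\bm{\phi};\mu,\xi)+\eps E\qty(\bm{\phi};\mu,\xi)),
\end{equation*}
since the subtraction of the $Z_{j;\mu,\xi}$-components built into $\T\qty[{\mu,\xi}]$ produces exactly the Lagrange multipliers $c_{\eps,j}\qty(\mu,\xi)$ from \eqref{cj}.

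The crux of the argument is to establish two weighted estimates from $X_{\mu,\xi}$ into $Y_{\mu,\xi}$. For the perturbative term, direct computation gives $\norm{U_{\mu,\xi}^{(N+2)/(N-2)}}_{Y_{\mu,\xi}}=1$, so $\norm{E\qty(0;\mu,\xi)}_{Y_{\mu,\xi}}\leq \norm{k}_{\infty}$; a first-order Taylor expansion of $t\mapsto k\qty(U_{\mu,\xi}+t)_{+}^{(N+2)/(N-2)}$ combined with the pointwise bound on $\phi\in X_{\mu,\xi}$ yields the Lipschitz estimate $\norm{E\qty(\phi_1;\mu,\xi)-E\qty(\phi_2;\mu,\xi)}_{Y_{\mu,\xi}}\leq C\norm{k}_{\infty}\norm{\phi_1-\phi_2}_{X_{\mu,\xi}}$ on a fixed neighborhood of the origin. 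For the genuinely nonlinear term $N\qty(\phi;\mu,\xi)$, the hypothesis $\lambda\leq 4$ ensures $p=(2N-\lambda)/(N-2)\geq 2$, so $N$ is a second-order Taylor remainder in $\phi$; the scale-invariance exploited in \Cref{clm:1} reduces the estimate to the case $(\mu,\xi)=(1,0)$, where the decay of $U$ together with the Riesz-potential bounds of \Cref{lem:decay} should deliver
\begin{equation*}
\norm{N\qty(\phi_1;\mu,\xi)-N\qty(\phi_2;\mu,\xi)}_{Y_{\mu,\xi}}\leq C\qty(\norm{\phi_1}_{X_{\mu,\xi}}+\norm{\phi_2}_{X_{\mu,\xi}})\norm{\phi_1-\phi_2}_{X_{\mu,\xi}},
\end{equation*}
with constants independent of $(\mu,\xi)$. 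Verifying this last bound carefully, and in particular checking that the two distinct Riesz integrals entering $N$ both produce the correct $\jp{(x-\xi)/\mu}^{-(N+2)}$ profile uniformly in $(\mu,\xi)$, is the main technical obstacle I foresee.

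Once these two estimates are in place, I would fix $C_1=2C_0$ with $C_0$ as in \eqref{g2phi:prmt:est} and work on the closed ball $B_{\eps}=\setc*{\phi\in X_{\mu,\xi}}{\norm{\phi}_{X_{\mu,\xi}}\leq C_1\norm{k}_{\infty}\eps}$. Applying \Cref{prop:linsltn:prmt} on top of the two bounds above shows that, for $\eps>0$ sufficiently small, $\F\qty[\mu,\xi]$ both maps $B_{\eps}$ into itself and is a contraction there with Lipschitz constant of order $O(\eps)$. The Banach fixed-point theorem then produces the unique $\bm{\phi}\qty[\mu,\xi]\in B_{\eps}$ satisfying \eqref{phi:bd}.

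The $C^1$-dependence and the derivative bound \eqref{pd:phi:bd} follow from the implicit function theorem applied to
\begin{equation*}
G(\phi,\mu,\xi)\coloneqq \phi-\T\qty[{\mu,\xi}]\qty(N\qty(\phi;\mu,\xi)+\eps E\qty(\phi;\mu,\xi)).
\end{equation*}
The $C^1$-dependence of $\T\qty[{\mu,\xi}]$ from \Cref{lemm:C1}, together with the smoothness of $N$ and $E$ in all variables, makes $G$ jointly $C^1$, while $\partial_{\phi}G\qty(\bm{\phi}\qty[\mu,\xi],\mu,\xi)=\mathrm{Id}+O(\eps)$ is invertible for $\eps$ small. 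Differentiating the fixed-point identity in $\mu$ and $\xi_j$, controlling $\partial\T\qty[{\mu,\xi}]/\partial(\mu,\xi_j)$ by the $1/\mu$-weighted bound of \Cref{lemm:C1}, and observing that $\partial_{\mu,\xi_j}\qty(N+\eps E)$ picks up a factor $1/\mu$ from the explicit dependence of $U_{\mu,\xi}$ on $(\mu,\xi)$, yields \eqref{pd:phi:bd}.
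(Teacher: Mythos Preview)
Your proposal is correct and follows essentially the same route as the paper: recast \eqref{NLP} as the fixed-point equation $\bm{\phi}=\T[\mu,\xi]\bigl(N(\bm{\phi};\mu,\xi)+\eps E(\bm{\phi};\mu,\xi)\bigr)$, establish the quadratic estimate on $N$ and the linear Lipschitz estimate on $E$ in the $X_{\mu,\xi}\to Y_{\mu,\xi}$ scale (the paper does this directly in weighted norms rather than reducing to $(\mu,\xi)=(1,0)$, but the content is the same), run the contraction argument on a ball of radius $\sim\norm{k}_\infty\eps$, and then obtain \eqref{pd:phi:bd} from the implicit function theorem together with \Cref{lemm:C1}. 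The pointwise bound $|N(\phi;\mu,\xi)(x)|\leq C\,U_{\mu,\xi}^{(6-N)/(N-2)}(x)\,|\phi(x)|^2$ that the paper records as its equation for $N$ is exactly the ``second-order Taylor remainder'' control you anticipate, and your identification of $p\geq 2$ (from $\lambda\leq 4$) as the reason this works is on target.
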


%\begin{align}
%\pdv{N}{\phi}\qty(\phi;\mu,\xi)\psi
%=
%\qty(\d{f}\qty(U_{\mu,\xi}+\phi)-\d{f}\qty(U_{\mu,\xi}))\psi
%\end{align}
%\begin{align}
%N\qty(\phi_1;\mu,\xi)-N\qty(\phi_2;\mu,\xi)
%\end{align}
\begin{proof}
By \Cref{prop:linsltn:prmt}, we deduce that $\phi$ solves \eqref{NLP} if and only if
\begin{align}
\label{IF:NLP}
\phi 
=
\K\qty[\mu,\xi]\qty(\phi)
,\qq{} \phi\in X_{\mu,\xi},
\end{align}
where
\begin{align}
\label{K}
	\K\qty[\mu,\xi]\qty(\phi)
	=
	\T\qty[\mu,\xi]\qty(
	N\qty(\phi;\mu,\xi)
	+\eps E\qty(\phi;\mu,\xi)
	).
\end{align}

Now, for each $\rho\in\qty(0,\frac{1}{2})$, we introduce the subset of $X_{\mu,\xi}$:
\begin{align*}
	B_{\rho}=\setc{\varphi\in X_{\mu,\xi}}{ \norm{\varphi}_{X_{\mu,\xi}}< \rho }.
\end{align*}

\subsection*{Step 1}
\label{set5:step1}
 We show that, for any $\eps>0$ sufficiently small, there exists $\rho_0\in\qty(0,\frac{1}{2})$ sufficiently small, such that, for each $\qty(\mu,\xi)\in\qty(0,+\infty)\times\R^N$, 
there exists a unique $\bm{\phi}\qty[\mu,\xi]$ such that
\begin{align}
\label{sec5:temp21}
	\bm{\phi}\qty[\mu,\xi]=\K\qty[\mu,\xi]\qty(\bm{\phi}\qty[\mu,\xi]),
\qq{with }\bm{\phi}\qty[\mu,\xi]\in \bar{B}_{\rho_0}.
\end{align}

First, notice that, for any $\phi\in \bar{B}_{\rho}$,  we have 
$\abs{\phi\qty(x)}\leq\frac{1}{2}U_{\mu,\xi}\qty(x)$, and therefore, 
\begin{align}
	\label{sec5:temp01}
	\abs{U_{\mu,\xi}\qty(x)+\phi\qty(x)}\leq \frac{3}{2}U_{\mu,\xi}\qty(x),
\end{align}
which, together with \eqref{N} and \Cref{lem:decay}, implies that
\begin{gather}
\label{sec5:temp20}
	\abs{ N\qty(\phi;\mu,\xi)\qty(x) }\leq C_1 U_{\mu,\xi}^{\frac{6-N}{N-2}}\qty(x)\abs{\phi\qty(x)}^2,
\end{gather}
where the positive constant $C_1$ is independent of $\mu$ and $\xi$.
Hence, we have
\begin{align}
	\label{sec5:temp02}
	\norm{N\qty(\phi;\mu,\xi)}_{Y_{\mu,\xi}}
	\leq C_1 \rho^2.	
\end{align}
Moreover, using \eqref{sec5:temp01} and \Cref{lem:decay}, we get
\begin{align}
	\label{sec5:temp03}
	\norm{E\qty(\phi;\mu,\xi)}_{Y_{\mu,\xi}}
	\leq C_2 \norm{k}_{\infty}.
\end{align}
where the positive constant $C_2$ is independent of $\mu$ and $\xi$.
By \Cref{prop:linsltn:prmt}, we deduce, from \eqref{sec5:temp02} and \eqref{sec5:temp03}, that
\begin{align}
\label{sec5:temp06}
	\norm{\K\qty[\mu,\xi]\qty(\phi)}_{X_{\mu,\xi}}
	\leq C_0\qty(C_1\rho^2+\eps C_2\norm{k_{\infty}}),
	\qq{for any} \phi\in \bar{B}_{\rho}.
\end{align}

Next, notice that, for any $\phi\in \bar{B}_{\rho}$ and $\tilde{\phi}\in \bar{B}_{\rho}$, we have
\begin{align*}
	\abs{N\qty(\phi;\mu,\xi)\qty(x)-N\qty(\tilde{\phi};\mu,\xi)\qty(x)}
	\leq C_3 
	U_{\mu,\xi}^{\frac{6-N}{N-2}}\qty(x)
	\qty(\abs{\phi\qty(x)}+\abs{\tilde{\phi}\qty(x)})
	\abs{{\phi\qty(x)}-{\tilde{\phi}\qty(x)}},
\end{align*}
therefore,
\begin{align}
\label{sec5:temp04}
	\norm{N\qty(\phi;\mu,\xi)-N\qty(\tilde{\phi};\mu,\xi)}_{Y_{\mu,\xi}}
	\leq  
	2 C_3\rho \norm{{\phi}-{\tilde{\phi}}}_{X_{\mu,\xi}}.
\end{align}
where the positive constant $C_3$ is independent of $\mu$ and $\xi$.
Moreover, by direct computations, we get
\begin{align}
\label{sec5:temp05}
	\norm{E\qty(\phi;\mu,\xi)-E\qty({\tilde{\phi}};\mu,\xi)}_{Y_{\mu,\xi}}
	\leq C_4\norm{k}_{\infty} \norm{{\phi}-{\tilde{\phi}}}_{X_{\mu,\xi}}.
\end{align}
where the positive constant $C_4$ is independent of $\mu$ and $\xi$.
Hence, by making use of \Cref{prop:linsltn:prmt}, \eqref{sec5:temp04} and \eqref{sec5:temp05}, we get,
for any $\phi,~\tilde{\phi}\in \bar{B}_{\rho}$, 
\begin{align}
\label{sec5:temp07}
\norm{\K\qty[\mu,\xi]\qty(\phi)-\K\qty[\mu,\xi]\qty(\tilde{\phi})}_{X_{\mu,\xi}}
\leq C_0\qty(2C_3\rho +\eps C_4\norm{k_{\infty}} )\norm{{\phi}-{\tilde{\phi}}}_{X_{\mu,\xi}}.
\end{align}

Finally, using the fact that $\eps>0$ is sufficiently small, by taking
\begin{align}
\label{sec5:temp17}
\rho_0=\min\{2C_0 C_2\eps\norm{k_{\infty}},~\frac{C_4}{2C_3}\eps\norm{k_{\infty}} \},
\end{align} 
we get
\begin{align*}
C_0\qty(C_1\rho_0^2+\eps C_2\norm{k_{\infty}})<\rho_0,
\qq{and}
C_0\qty(2C_3\rho_0 +\eps C_4\norm{k_{\infty}})<\frac{1}{2}.	
\end{align*}
Therefore, by \eqref{sec5:temp06} and \eqref{sec5:temp07}, we have, for any $\phi$, $\tilde{\phi}\in \bar{B}_{\rho_0}$,
\begin{align*}
\norm{\K\qty[\mu,\xi]\qty(\phi)}_{X_{\mu,\xi}}
\leq \rho_0,
\qq{and}
\norm{\K\qty[\mu,\xi]\qty(\phi)-\K\qty[\mu,\xi]\qty(\tilde{\phi})}_{X_{\mu,\xi}}\leq 
\frac{1}{2}\norm{{\phi}-{\tilde{\phi}}}_{X_{\mu,\xi}},
\end{align*}
which, together with the contraction mapping theorem, implies that \eqref{sec5:temp21} holds.

\subsection*{Step 2.}We show that \eqref{pd:phi:bd} holds. 

Let $\qty(\mu_0,\xi_0)\in\qty(0,+\infty)\times\R^N$ be arbitrary fixed. 
We denote $\bm{\Phi}_0 = \bm{\phi}\qty(\mu_0,\xi_0)$
and
\begin{align*}
\A\qty( \phi; \mu,\xi )
=
\phi 
-
\K\qty[\mu,\xi]\qty(\phi).	
\end{align*}
Obviously, by \eqref{sec5:temp21}, we have
\begin{align}
\label{sec5:temp13}
	\A\qty(\bm{\Phi}_0; \mu_0,\xi_0 )=0.
\end{align}

By direct computations, we get
\begin{align}
\label{sec5:temp10}
\pdv{\A}{\phi}\qty( \bm{\Phi}_0; \mu_0,\xi_0 )\psi
=\psi
-
\T\qty[\mu_0,\xi_0]\qty(
\pdv{N}{\phi}\qty(\bm{\Phi}_0;\mu_0,\xi_0)\psi
+\eps \pdv{E}{\phi}\qty(\bm{\Phi}_0;\mu_0,\xi_0)\psi
),	
\end{align}
where
\begin{align}
\notag
&\pdv{N}{\phi}\qty(\bm{\Phi}_0;\mu_0,\xi_0)\psi
\\
\notag
=
&
\alpha p 
\qty(
\Riesz{\qty(U_{\mu_0,\xi_0}+\bm{\Phi}_0)^{p-1}\psi}\qty(U_{\mu_0,\xi_0}+\bm{\Phi}_0)^{p-1}
-
\Riesz{U_{\mu_0,\xi_0}^{p-1}\psi}U_{\mu_0,\xi_0}^{p-1}
)
\\
\notag
&
+
\alpha \qty(p-1) \qty(\Riesz{\qty(U_{\mu_0,\xi_0}+\bm{\Phi}_0)^{p}}\qty(U_{\mu_0,\xi_0}+\bm{\Phi}_0)^{p-2}-\Riesz{U_{\mu_0,\xi_0}^{p}}U_{\mu_0,\xi_0}^{p-2})\psi,	
\end{align}
and
\begin{align*}
\pdv{E}{\phi}\qty(\bm{\Phi}_0;\mu_0,\xi_0)\psi
=
\frac{N+2}{N-2} k \qty( U_{\mu_0,\xi_0}+\bm{\Phi}_0 )^{\frac{4}{N-2}}\psi.		
\end{align*}

By making use of a similar argument as that appeared in {Step 1}, we obtain that
\begin{align}
\label{sec5:temp08}
\norm{\pdv{N}{\phi}\qty(\bm{\Phi}_0;\mu_0,\xi_0)\psi}_{Y_{\mu_0,\xi_0}}
\leq C\norm{\bm{\Phi}_0}_{X_{\mu_0,\xi_0}}\norm{{\psi}}_{X_{\mu_0,\xi_0}},
\end{align}
and
\begin{align}
\label{sec5:temp09}
\norm{\pdv{E}{\phi}\qty(\bm{\Phi}_0;\mu_0,\xi_0)\psi}_{Y_{\mu_0,\xi_0}}
\leq
C \norm{k}_{\infty}\norm{\psi}_{X_{\mu_0,\xi_0}},	
\end{align}
where we used the fact that $\norm{\bm{\Phi}_0}_{X_{\mu_0,\xi_0}}\leq \rho_0$.
Therefore, by inserting \eqref{sec5:temp08} and \eqref{sec5:temp09} into \eqref{sec5:temp10}, we get
\begin{align}
\label{sec5:temp11}
\norm{\pdv{\A}{\phi}\qty( \bm{\Phi}_0; \mu_0,\xi_0 )\psi}_{X_{\mu_0,\xi_0}}
	>\frac{1}{2}\norm{\psi}_{X_{\mu_0,\xi_0}},	
\end{align}
where we used that $\eps$ and $\rho_0$ are sufficiently small.
It follows from \eqref{sec5:temp11} that
$\pdv{\A}{\phi}\qty( \bm{\Phi}_0; \mu_0,\xi_0 ):X_{{\mu_0,\xi_0}}\mapsto X_{{\mu_0,\xi_0}}$ is invertible with
\begin{align}
\label{sec5:temp12}
	\norm{\pdv{\A}{\phi}\qty( \bm{\Phi}_0; \mu_0,\xi_0 )^{-1}}_{L\qty( X_{\mu_0,\xi_0} , X_{\mu_0,\xi_0} )}<2.
\end{align} 
By the Implicit Function Theorem, for any $\qty(\mu,\xi)$ sufficiently close to $\qty(\mu_0,\xi_0)$, 
there exists a unique function $\bm{\Phi}\qty[\mu,\xi]$ satisfying 
\begin{align*}
	\bm{\Phi}\qty[\mu,\xi]= \K\qty[\mu,\xi]\qty(\bm{\Phi}\qty[\mu,\xi]),
\end{align*}
with $\bm{\Phi}\qty(\mu_0,\xi_0)=\bm{\Phi}_0$. Moreover, by uniqueness, we have 
\begin{align*}
	\bm{\phi}\qty[\mu,\xi]=\bm{\Phi}\qty[\mu,\xi],
\end{align*}
where $\qty(\mu,\xi)$ is sufficiently close to $\qty(\mu_0,\xi_0)$. 

Next, by direct computations, we get
\begin{align}
\notag
\label{sec5:temp16}
&\pdv{\bm{\phi}}{\mu}\qty[\mu_0,\xi_0]
\\
\notag
=&
\qty(\pdv{\A}{\phi}\qty(\bm{\phi}\qty[\mu_0,\xi_0];\mu_0,\xi_0 ))^{-1}
\pdv{\T}{\mu}\qty(\mu,\xi)
N\qty(\bm{\phi}\qty[\mu_0,\xi_0];\mu_0,\xi_0)
\\
\notag
&+\eps
\qty(\pdv{\A}{\phi}\qty(\bm{\phi}\qty[\mu_0,\xi_0];\mu_0,\xi_0 ))^{-1}
\pdv{\T}{\mu}\qty(\mu,\xi)
 E\qty(\bm{\phi}\qty[\mu_0,\xi_0];\mu_0,\xi_0)
\\
\notag
&+
\qty(\pdv{\A}{\phi}\qty(\bm{\phi}\qty[\mu_0,\xi_0];\mu_0,\xi_0 ))^{-1}
{\T}\qty(\mu_0,\xi_0)
\pdv{N}{\mu}\qty(\bm{\phi}\qty[\mu_0,\xi_0];\mu_0,\xi_0)
\\
&
+\eps
\qty(\pdv{\A}{\phi}\qty(\bm{\phi}\qty[\mu_0,\xi_0];\mu_0,\xi_0 ))^{-1}
{\T}\qty(\mu_0,\xi_0)
\pdv{E}{\mu}\qty(\bm{\phi}\qty[\mu_0,\xi_0];\mu_0,\xi_0)
,		
\end{align}
%with 
%\begin{align}
%\notag
%&\pdv{N}{\phi}\qty(\phi\qty[\mu_0,\xi_0];\mu_0,\xi_0)\psi
%\\
%\notag
%=
%&
%\alpha p 
%\qty(
%\Riesz{\qty(U_{\mu_0,\xi_0}+\phi\qty[\mu_0,\xi_0])^{p-1}\psi}\qty(U_{\mu_0,\xi_0}+\phi\qty[\mu_0,\xi_0])^{p-1}
%-
%\Riesz{U_{\mu_0,\xi_0}^{p-1}\psi}U_{\mu_0,\xi_0}^{p-1}
%)
%\\
%&
%+
%\alpha \qty(p-1) \qty(\Riesz{\qty(U_{\mu_0,\xi_0}+\phi\qty[\mu_0,\xi_0])^{p}}\qty(U_{\mu_0,\xi_0}+\phi\qty[\mu_0,\xi_0])^{p-2}-\Riesz{U_{\mu_0,\xi_0}^{p}}U_{\mu,\xi}^{p-2})\psi,		
%\end{align}
%and
%\begin{align}
%\pdv{E}{\phi}\qty(\phi\qty[\mu_0,\xi_0];\mu_0,\xi_0)\psi
%=
%\frac{N+2}{N-2} k \qty( U_{\mu_0,\xi_0}+\phi\qty[\mu_0,\xi_0] )^{\frac{4}{N-2}}\psi.		
%\end{align}
%\begin{multline}
%\pdv{\A}{\mu}\qty( \phi; \mu,\xi )
%=
%-
%\pdv{\T}{\mu}\qty(\mu,\xi)\qty(
%N\qty(\phi;\mu,\xi)
%+\eps E\qty(\phi;\mu,\xi)
%)
%\\
%-
%{\T}\qty(\mu,\xi)\qty(
%\pdv{N}{\mu}\qty(\phi;\mu,\xi)
%+\eps \pdv{E}{\mu}\qty(\phi;\mu,\xi)
%).	
%\end{multline}
Again, by making use of a similar argument as that appeared in the proof of Step 1, we get
\begin{gather}
\label{sec5:temp14}
	\norm{N\qty(\bm{\phi}\qty[\mu_0,\xi_0];\mu_0,\xi_0)}_{Y_{\mu,\xi}}
	\leq 
	C\norm{\bm{\phi}\qty[\mu_0,\xi_0]}_{Y_{\mu_0,\xi_0}}^2,	
\\
\label{sec5:temp18}
	\norm{\pdv{N}{\mu}\qty(\bm{\phi}\qty[\mu_0,\xi_0];\mu_0,\xi_0)}_{Y_{\mu_0,\xi_0}}
	\leq 
	\frac{C}{\mu_0}\norm{\bm{\phi}\qty[\mu_0,\xi_0]}_{Y_{\mu_0,\xi_0}}^2,	
\\
\label{sec5:temp15}
	\norm{E\qty(\bm{\phi}\qty[\mu_0,\xi_0];\mu_0,\xi_0)}_{Y_{\mu_0,\xi_0}}
	\leq 
	C\norm{k}_{\infty},
\\
\label{sec5:temp19}
	\norm{\pdv{E}{\mu}\qty(\bm{\phi}\qty[\mu_0,\xi_0];\mu_0,\xi_0)}_{Y_{\mu_0,\xi_0}}
	\leq 
	\frac{C}{\mu}\norm{k}_{\infty}.
\end{gather}

Finally, using \Cref{prop:linsltn:prmt}, \Cref{lemm:C1} and \eqref{sec5:temp17}, by inserting \eqref{sec5:temp14}-\eqref{sec5:temp19} into \eqref{sec5:temp16}, we get
\begin{align*}
\norm{\pdv{\bm{\phi}}{\mu}\qty[\mu_0,\xi_0]}_{X_{\mu_0,\xi_0}}
\leq 
\frac{C}{\mu_0}\norm{k}_{\infty}\eps.	
\end{align*}
By a similar argument as above, we have
\begin{align*}
	\norm{\pdv{\bm{\phi}}{\xi_j}\qty[\mu_0,\xi_0]}_{X_{\mu_0,\xi_0}}
	\leq 
	\frac{C}{\mu_0}\norm{k}_{\infty}\eps,
\qq{} j=1,2,\ldots,N.
\end{align*}
Hence, \eqref{pd:phi:bd} holds. This ends the proof of \Cref{prop:nlp}.
\end{proof}
\section{The finite dimensional reduction}
\label{sec:fntdim}
In this section, based on the above argument, we reduce solving the full problem \eqref{NLH-eps} to finding a critical point of a finite dimensional function. 

Recall that, at least formally, the equation 
\begin{equation}
\label{NLH-eps++}
-{\Laplace u}
-{\alpha}
\Riesz{\qty(u_{+})^p}
\qty(u_{+})^{p-1}
-\eps k\qty(u_{+})^{\frac{N+2}{N-2}}
=0	
\end{equation}
is the Euler-Lagrange equation corresponds to the functional $J_{\eps}$, which is given by
\begin{align}
\label{J}
J_{\eps}\qty(u)
=
\frac{1}{2}\int_{\R^N}\abs{\grad{u}}^2
-\frac{\alpha}{2p}
\int_{\R^N}
\Riesz{u_{+}^p}u_{+}^p
-\frac{N-2}{2N}\eps\int_{\R^N}k u_{+}^{\frac{2N}{N-2}}.		
\end{align}

Now, let the function $\bm{\phi}\qty[\mu,\xi]$ be given by \Cref{prop:nlp}, we denote 
\begin{align}
\label{omega} \omega\qty[\mu,\xi]\qty(x)=U_{\mu,\xi}\qty(x)+\bm{\phi}\qty[\mu,\xi]\qty(x),			
\end{align}
and
\begin{align}
\label{jeps}
	\j_{\eps}\qty(\mu,\xi)=J_{\eps}\qty(U_{\mu,\xi}+\bm{\phi}\qty[\mu,\xi]).
\end{align}
By \eqref{phi:bd}, for $\eps$ sufficiently small, we get 
\begin{align}
	\label{sec6:07}
	\abs{\bm{\phi}\qty[\mu,\xi]\qty(x)}\leq \frac{1}{2}U_{\mu,\xi}\qty(x),\qq{for all} x\in\R^N,
\end{align}
which implies that
\begin{align*}
	\qty(\omega\qty[\mu,\xi])_{+}\qty(x)=\omega\qty[\mu,\xi]\qty(x),\qq{for all} x\in\R^N.
\end{align*}
Therefore, by \eqref{NLP}, we easily check that 
\begin{align}
\label{omega:eq}
-{\Laplace \omega\qty[\mu,\xi]}
-{\alpha}
\Riesz{\omega\qty[\mu,\xi]^p}\,
\omega^{p-1}
-\eps k\omega\qty[\mu,\xi]^{\frac{N+2}{N-2}}
=-\sum_{j=0}^N c_{\eps,j}\qty(\mu,\xi) H_{j;\xi}^{\mu},			
\end{align}
where
\begin{align}
\label{cj:6}
c_{\eps,j}\qty(\mu,\xi)=\frac{1}{\displaystyle\int_{\R^N}H_{j}Z_{j}}
\int_{\R^N}\qty(N\qty(\bm{\phi}\qty[\mu,\xi];\mu,\xi)+\eps k\omega\qty[\mu,\xi]^{\frac{N+2}{N-2}})Z_{j;\mu,\xi},		
\end{align}
with 
\begin{multline}
\label{sec6:N}
N\qty(\bm{\phi}\qty[\mu,\xi];\mu,\xi)
=
{\alpha}\Riesz{\omega\qty[\mu,\xi]^p}\omega\qty[\mu,\xi]^{p-1}
-
{\alpha}\Riesz{U_{\mu,\xi}^p}U_{\mu,\xi}^{p-1}
\\
-
{\alpha}p\Riesz{U_{\mu,\xi}^{p-1}\bm{\phi}\qty[\mu,\xi]}U_{\mu,\xi}^{p-1}
-
{\alpha}\qty(p-1)\Riesz{U_{\mu,\xi}^{p}}U_{\mu,\xi}^{p-2}\bm{\phi}\qty[\mu,\xi].	
\end{multline}

Noticing \eqref{omega:eq}, in order to obtain solutions to \eqref{NLH-eps++} 
with the form given by \eqref{omega}, we only need to find $\qty(\mu,\xi)\in \qty(0,+\infty)\times\R^N$ such that
\begin{align}
\label{sec6:01}
	c_{\eps,j}\qty(\mu,\xi)=0,\qq{}j=0,1,\ldots,N.
\end{align}

The following lemma shows that the finite dimensional problem \eqref{sec6:01} is closed related to critical points of   $\j_{\eps}$.

\begin{lemm}
\label{lem:c2J}
Let $\eps>0$ be sufficiently small and the function $k$ satisfies \ref{k:pstv}-\ref{k:reg}.
Let $\j_{\eps}$ and $c_{\eps,j}$ be given by \eqref{jeps} and \eqref{cj:6} respectively. 
Then 
\begin{align*}
c_{\eps,j}\qty(\mu,\xi)=0,\qq{}j=0,1,\ldots,N,
\end{align*}
if and only if
\begin{align*}
\nabla_{\mu,\xi}\j_{\eps}\qty(\mu,\xi)=0.
\end{align*}	
\end{lemm}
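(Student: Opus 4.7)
The plan is to differentiate $\j_\eps(\mu,\xi)=J_\eps(\omega[\mu,\xi])$ using the chain rule, use the Euler--Lagrange identity \eqref{omega:eq} to express $\nabla_{\mu,\xi}\j_\eps$ as a linear combination of the Lagrange multipliers $c_{\eps,j}$, and then verify that the resulting $(N{+}1)\times(N{+}1)$ coefficient matrix is invertible for $\eps>0$ sufficiently small. Writing $\alpha_0=\mu$ and $\alpha_m=\xi_m$ for $m=1,\ldots,N$, and noting from \eqref{sec6:07} that $\omega[\mu,\xi]_+ = \omega[\mu,\xi]$, the chain rule combined with \eqref{omega:eq} yields
\begin{align*}
\partial_{\alpha_m}\j_\eps(\mu,\xi)
=\bigl\langle J_\eps'(\omega[\mu,\xi]),\,\partial_{\alpha_m}\omega[\mu,\xi]\bigr\rangle
=-\sum_{j=0}^{N} c_{\eps,j}(\mu,\xi)\int_{\R^N} H^\mu_{j;\xi}\,\partial_{\alpha_m}\omega[\mu,\xi]\odif{x}.
\end{align*}
Thus, introducing the $(N{+}1)\times(N{+}1)$ matrix $M(\mu,\xi)$ with entries $M_{mj}(\mu,\xi)=\int_{\R^N} H^\mu_{j;\xi}\,\partial_{\alpha_m}\omega[\mu,\xi]\odif{x}$, one obtains $\nabla_{\mu,\xi}\j_\eps(\mu,\xi) = -M(\mu,\xi)\, c_\eps(\mu,\xi)$, where $c_\eps = (c_{\eps,0},\ldots,c_{\eps,N})^\top$. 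The lemma thus reduces to the invertibility of $M(\mu,\xi)$.

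To establish this invertibility, I split $\partial_{\alpha_m}\omega[\mu,\xi] = \partial_{\alpha_m} U_{\mu,\xi} + \partial_{\alpha_m}\bm{\phi}[\mu,\xi]$. A direct computation from $U_{\mu,\xi}(x)=\mu^{-(N-2)/2}U((x-\xi)/\mu)$ together with \eqref{Zj:def} gives $\partial_{\alpha_m} U_{\mu,\xi} = \mu^{-1} Z_{m;\mu,\xi}$, and the change of variables $y=(x-\xi)/\mu$ produces
\begin{align*}
\int_{\R^N} H^\mu_{j;\xi}\,\partial_{\alpha_m} U_{\mu,\xi}\odif{x}
=\frac{1}{\mu}\int_{\R^N} H_j(y)\,Z_m(y)\odif{y}
=\frac{1}{\mu}\int_{\R^N} U^{\frac{4}{N-2}}\,Z_j\,Z_m\odif{y}.
\end{align*}
The matrix $D$ with entries $D_{mj}=\int U^{4/(N-2)} Z_j Z_m$ is diagonal with strictly positive diagonal entries: by \eqref{Zj}, $U^{4/(N-2)}$ and $Z_0$ are radial while $Z_j$ for $j\geq 1$ is odd in $x_j$ and even in $x_k$ for $k\neq j$, so every off-diagonal entry vanishes by parity, whereas $\int U^{4/(N-2)} Z_j^2>0$. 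For the contribution of $\bm{\phi}$, the estimate $\norm{\partial_{\alpha_m}\bm{\phi}[\mu,\xi]}_{X_{\mu,\xi}}\leq (C/\mu)\norm{k}_\infty\eps$ from \Cref{prop:nlp}, combined with the explicit decay of $H^\mu_{j;\xi}$ and the same rescaling, gives $\bigl|\int H^\mu_{j;\xi}\,\partial_{\alpha_m}\bm{\phi}[\mu,\xi]\bigr|\leq (C/\mu)\norm{k}_\infty\eps$. Hence $\mu M(\mu,\xi)=D+O(\eps)$, which is invertible for $\eps$ sufficiently small, and the equivalence $\nabla_{\mu,\xi}\j_\eps(\mu,\xi)=0 \iff c_\eps(\mu,\xi)=0$ follows.

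The main technical point will be the perturbative bound on $\int H^\mu_{j;\xi}\,\partial_{\alpha_m}\bm{\phi}$: one must carefully match the scaling of $H^\mu_{j;\xi}$ (of $Y_{\mu,\xi}$ type) against the $X_{\mu,\xi}$ norm of $\partial_{\alpha_m}\bm{\phi}[\mu,\xi]$ and check that the resulting constant is independent of $(\mu,\xi)$. Apart from this, the argument is a routine Lyapunov--Schmidt equivalence between the reduced finite-dimensional critical-point equations and the vanishing of the Lagrange multipliers.
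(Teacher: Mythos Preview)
Your proposal is correct and follows essentially the same approach as the paper: differentiate $\j_\eps$ via the chain rule, use \eqref{omega:eq} to write $\nabla_{\mu,\xi}\j_\eps$ as a matrix times the vector $(c_{\eps,0},\ldots,c_{\eps,N})^\top$, split $\partial_{\alpha_m}\omega=\mu^{-1}Z_{m;\mu,\xi}+\partial_{\alpha_m}\bm{\phi}$, and show the matrix is a diagonal invertible matrix plus an $O(\eps)$ perturbation. The paper organizes the computation as $(A+B_\eps)c_\eps=\nabla_{\mu,\xi}\j_\eps$ and invokes the orthogonality relations \eqref{ZjHm:vns}--\eqref{ZjHj} rather than the explicit parity argument you give, but the content is identical.
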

\begin{proof}
First, by direct computations, we have
\begin{align*}
&\pdv{\j_{\eps}}{\mu}\qty(\mu,\xi)
\\=&
\int_{\R^N}\qty(-{\Laplace \omega\qty[\mu,\xi]}
-{\alpha}
\Riesz{\omega\qty[\mu,\xi]^p}\,
\omega^{p-1}
-\eps k\omega\qty[\mu,\xi]^{\frac{N+2}{N-2}})
\qty( \frac{1}{\mu}Z_{0;\mu,\xi}+\pdv{\phi}{\mu}\qty[\mu,\xi] ),	
\end{align*}
which, together with \eqref{omega:eq} and \Cref{lem:identities}, implies that
\begin{align}
\label{sec6:02}
\pdv{\j_{\eps}}{\mu}\qty(\mu,\xi)
=
-c_{\eps,0}\qty(\mu,\xi)\frac{1}{\mu}\int_{\R^N}H_0Z_0
-\sum_{j=0}^{N}c_{\eps,j}\qty(\mu,\xi)\int_{\R^N} H_{j;\xi}^{\mu}\pdv{\phi}{\mu}\qty[\mu,\xi].	
\end{align}
Arguing similarly as above, we obtain that, for $i=1,\ldots,N$,
\begin{align}
\label{sec6:03}
\pdv{\j_{\eps}}{\xi_i}\qty(\mu,\xi)
=
-c_{\eps,i}\qty(\mu,\xi)\frac{1}{\mu}\int_{\R^N}H_i Z_i
-\sum_{j=0}^{N}c_{\eps,j}\qty(\mu,\xi)\int_{\R^N} H_{j;\xi}^{\mu}\pdv{\phi}{\xi_i}\qty[\mu,\xi].		
\end{align}

Now, by \eqref{sec6:02} and \eqref{sec6:03}, we easily get
\begin{align}
\label{sec6:04}
\qty(A+B_{\eps})
\begin{pmatrix}
	c_{\eps,0} \\
	c_{\eps,1} \\
	\vdots \\
	c_{\eps,N}
\end{pmatrix}
=
\begin{pmatrix}
	\pdv{\j_{\eps}}{\mu}\qty(\mu,\xi) \\
	\pdv{\j_{\eps}}{\xi_1}\qty(\mu,\xi) \\
	\vdots \\
	\pdv{\j_{\eps}}{\xi_N}\qty(\mu,\xi)
\end{pmatrix},	
\end{align}
where
\begin{align*}
A=-\frac{1}{\mu}\diag
\qty(\int_{\R^N}H_0Z_0,\int_{\R^N}H_1Z_1,\ldots,\int_{\R^N}H_N Z_N),	
\end{align*}
and
\begin{align*}
B_{\eps}=-
\begin{pmatrix}
	\int_{\R^N} H_{0;\xi}^{\mu}\pdv{\phi}{\mu}\qty[\mu,\xi] 
	& \int_{\R^N} H_{1;\xi}^{\mu}\pdv{\phi}{\mu}\qty[\mu,\xi]  
	& \dots  
	& \int_{\R^N} H_{N;\xi}^{\mu}\pdv{\phi}{\mu}\qty[\mu,\xi] 
	\\
	\int_{\R^N} H_{0;\xi}^{\mu}\pdv{\phi}{\xi_1}\qty[\mu,\xi] 
	& \int_{\R^N} H_{1;\xi}^{\mu}\pdv{\phi}{\xi_1}\qty[\mu,\xi] 
	& \dots  
	& \int_{\R^N} H_{N;\xi}^{\mu}\pdv{\phi}{\xi_1}\qty[\mu,\xi] 
	\\
	\vdots & \vdots  & \ddots & \vdots \\
	\int_{\R^N} H_{0;\xi}^{\mu}\pdv{\phi}{\xi_N}\qty[\mu,\xi] 
	& \int_{\R^N} H_{1;\xi}^{\mu}\pdv{\phi}{\xi_N}\qty[\mu,\xi] 
	& \dots  
	& \int_{\R^N} H_{N;\xi}^{\mu}\pdv{\phi}{\xi_N}\qty[\mu,\xi] 
\end{pmatrix}.	
\end{align*}

By \eqref{pd:phi:bd}, we get, for $j=0,1,\ldots,N$ and $i=1,2,\ldots,N$,
\begin{align*}
	\int_{\R^N} H_{j;\xi}^{\mu}\pdv{\phi}{\mu}\qty[\mu,\xi]  =\frac{\norm{k}_{\infty}}{\mu}\O\qty(\eps),	
\qq{and}
	\int_{\R^N} H_{j;\xi}^{\mu}\pdv{\phi}{\xi_i}\qty[\mu,\xi]  =\frac{\norm{k}_{\infty}}{\mu}\O\qty(\eps),	
\end{align*} 
which implies that
\begin{align}
\label{sec6:05}
\det(A+B_{\eps})=\frac{(-1)^{N+1}}{\mu^{N+1}}
\qty(\prod_{j=1}^{N}\int_{\R^N}H_jZ_j
+\O\qty(\eps)).
\end{align}
Therefore, by \eqref{sec6:04} and \eqref{sec6:05}, we concludes the proof of \Cref{lem:c2J}.
\end{proof}

Next, in order to study the asymptotic behaviour of $\j_{\eps}$ as $\eps\to 0+$, we introduce the following function
\begin{align}
\label{Upsilon}
\Upsilon\qty(\mu,\xi)=
\int_{\R^N}k\qty(x)U_{\mu,\xi}^{\frac{2N}{N-2}}\qty(x)\odif{x}
=
\frac{N-2}{2N}\int_{\R^N}k\qty(\mu y+\xi) U^{\frac{2N}{N-2}}\qty(y)\odif{y}.
\end{align}
For any $k$ satisfies \ref{k:pstv}, by a straight calculation, we get
\begin{gather*}
\pdv{\Upsilon}{\mu}\qty(\mu,\xi)=\frac{1}{\mu}
\int_{\R^N}k\qty(\mu y+\xi) U^{\frac{N+2}{N-2}}\qty(y)Z_0\qty(y)\odif{y},
\\
\pdv{\Upsilon}{\xi_j}\qty(\mu,\xi)=\frac{1}{\mu}
\int_{\R^N}k\qty(\mu y+\xi) U^{\frac{N+2}{N-2}}\qty(y)Z_j\qty(y)\odif{y},\qq{}j=1,2,\ldots,N.
\end{gather*}
We can prove the following: 
\begin{lemm}
\label{lem:Jexpand}
Let $\qty(\mu,\xi)\in\qty(0,+\infty)\times\R^N$. 
Suppose $k$ satisfies \ref{k:pstv}.
Let $\j_{\eps}$ and $\Upsilon$ be given by \eqref{jeps} and \eqref{Upsilon} respectively. Then, as $\eps\to 0+$, we have
\begin{gather}
\label{expand:j}
\j_{\eps}\qty(\mu,\xi)=\bar{\j}_0-\eps\Upsilon\qty(\mu,\xi)
+
\norm{k}_{\infty}^2\O\qty(\eps^2),		
\\
\label{expand:pjmu}
\pdv{\j_\eps}{\mu}\qty(\mu,\xi)
=
-\eps\pdv{\Upsilon}{\mu}\qty(\mu,\xi)
+\frac{1}{\mu}\norm{k}_{\infty}^2\O\qty(\eps^2),	
\\
\label{expand:pjxi}
\pdv{\j_\eps}{\xi_j}\qty(\mu,\xi)
=
-\eps\pdv{\Upsilon}{\xi_j}\qty(\mu,\xi)
+\frac{1}{\mu}\norm{k}_{\infty}^2\O\qty(\eps^2),
\qq{}j=1,2,\ldots,N,		
\end{gather}
where
\begin{align*}
\bar{\j}_0
=
\frac{1}{2}\int_{\R^N}\abs{\grad{U}}^2
-\frac{\alpha}{2p}
\int_{\R^N}
\Riesz{U^p}U^p.
\end{align*}
\end{lemm}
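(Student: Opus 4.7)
The plan is to split
\begin{equation*}
\j_\eps(\mu,\xi) = J_\eps(U_{\mu,\xi}) + \bigl[J_\eps(\omega[\mu,\xi]) - J_\eps(U_{\mu,\xi})\bigr]
\end{equation*}
and estimate each piece separately. Let $\bar{J}(u) := \frac{1}{2}\int|\nabla u|^2 - \frac{\alpha}{2p}\int\Riesz{u^p}u^p$ denote the unperturbed functional. Because the problem is energy-critical for the Choquard nonlinearity, the change of variables $y=(x-\xi)/\mu$ gives $\bar{J}(U_{\mu,\xi})=\bar{J}(U)=\bar{\j}_0$ independent of $(\mu,\xi)$, and the same substitution together with the definition of $\Upsilon$ yields $\frac{N-2}{2N}\int k\,U_{\mu,\xi}^{2N/(N-2)} = \Upsilon(\mu,\xi)$. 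Hence $J_\eps(U_{\mu,\xi}) = \bar{\j}_0 - \eps\Upsilon(\mu,\xi)$ \emph{exactly}, and the task reduces to showing the correction term is $O(\eps^2 \|k\|_\infty^2)$.

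For that correction I would Taylor expand at $U_{\mu,\xi}$. The crucial observation is $\bar{J}'(U_{\mu,\xi})=0$, which collapses the linear contribution to
\begin{equation*}
\langle J'_\eps(U_{\mu,\xi}),\bm{\phi}\rangle = -\eps\int k\,U_{\mu,\xi}^{(N+2)/(N-2)}\bm{\phi}.
\end{equation*}
The pointwise bound $|\bm{\phi}(x)| \leq C\eps\|k\|_\infty U_{\mu,\xi}(x)$, which reformulates \eqref{phi:bd} via the scaling of the $X_{\mu,\xi}$-norm, combined with the scale-invariance of $\int U_{\mu,\xi}^{2N/(N-2)}$, bounds this by $O(\eps^2\|k\|_\infty^2)$. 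The quadratic remainder $\int_0^1(1-s)\langle J''_\eps(U_{\mu,\xi}+s\bm{\phi})\bm{\phi},\bm{\phi}\rangle\,ds$ is estimated term by term: each of the Choquard bilinear integrals $\int\Riesz{u^{p-1}\bm{\phi}}u^{p-1}\bm{\phi}$, $\int\Riesz{u^p}u^{p-2}\bm{\phi}^2$, the local critical integral $\eps\int k u^{4/(N-2)}\bm{\phi}^2$, and the Dirichlet piece $\int|\nabla\bm{\phi}|^2$ reduces (since $|\bm{\phi}|\leq \frac{1}{2}U_{\mu,\xi}$ forces $u\sim U_{\mu,\xi}$) to an integral of the form $\int U_{\mu,\xi}^{\star}\bm{\phi}^2$ bounded by $O(\eps^2\|k\|_\infty^2)$. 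This yields \eqref{expand:j}.

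For \eqref{expand:pjmu}--\eqref{expand:pjxi} the cleanest route is to use the identities \eqref{sec6:02}--\eqref{sec6:03} already established in \Cref{lem:c2J}, which express $\pdv{\j_\eps}{\mu}$ and $\pdv{\j_\eps}{\xi_i}$ in terms of the $c_{\eps,j}$. Plugging in the definition \eqref{cj:6} and using the identities
\begin{equation*}
\frac{1}{\mu}\int k\,U_{\mu,\xi}^{(N+2)/(N-2)}Z_{0;\mu,\xi}=\pdv{\Upsilon}{\mu},\qquad \frac{1}{\mu}\int k\,U_{\mu,\xi}^{(N+2)/(N-2)}Z_{i;\mu,\xi}=\pdv{\Upsilon}{\xi_i}
\end{equation*}
(recorded just above this lemma), the dominant piece $-\frac{1}{\mu}c_{\eps,\ell}\int H_\ell Z_\ell$ reproduces $-\eps\,\partial\Upsilon$. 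The remaining errors split into (i) the gap $|\omega^{(N+2)/(N-2)}-U_{\mu,\xi}^{(N+2)/(N-2)}|\leq C\eps\|k\|_\infty U_{\mu,\xi}^{(N+2)/(N-2)}$, contributing $O(\eps^2\|k\|_\infty^2/\mu)$; (ii) the $N(\bm{\phi})$ piece of $c_{\eps,j}$, which is $O(\eps^2\|k\|_\infty^2/\mu)$ by \eqref{sec5:temp20}; and (iii) the cross-sum $-\sum_j c_{\eps,j}\int H_{j;\xi}^\mu\,\pdv{\bm{\phi}}{\mu}$, which is handled by combining $|c_{\eps,j}|\leq C\eps\|k\|_\infty$ with the derivative estimate \eqref{pd:phi:bd}, again giving $O(\eps^2\|k\|_\infty^2/\mu)$.

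The main technical obstacle is the bookkeeping of $\eps$-orders: one must exploit $\bar{J}'(U_{\mu,\xi})=0$ to promote the linear-in-$\bm{\phi}$ correction from its naive size $O(\eps)$ to its actual size $O(\eps^2)$, and keep the $1/\mu$ factors arising from parameter differentiation of $U_{\mu,\xi}$ and $\bm{\phi}$ under control. The pointwise bound $|\bm{\phi}|\leq C\eps\|k\|_\infty U_{\mu,\xi}$ (coming from the $X_{\mu,\xi}$-norm) is essential throughout, as it reduces every error estimate to a scale-invariant integral such as $\int U_{\mu,\xi}^{2N/(N-2)}$, guaranteeing that the implicit constants are independent of $(\mu,\xi)$.
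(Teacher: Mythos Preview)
Your approach is essentially identical to the paper's: both exploit $\bar J'(U_{\mu,\xi})=0$ to kill the first-order term in \eqref{expand:j}, and both prove \eqref{expand:pjmu}--\eqref{expand:pjxi} by expanding the coefficients $c_{\eps,j}$ via \eqref{sec6:02}--\eqref{sec6:03} (equivalently \eqref{sec6:15}) exactly as you describe. The only cosmetic difference is the splitting for \eqref{expand:j} --- you separate $J_\eps(U_{\mu,\xi})$ from the correction, the paper separates $J_0$ from the $\eps$-part --- but the error terms to estimate are literally the same.

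One point to tighten: the Dirichlet piece $\int|\nabla\bm{\phi}|^2$ does \emph{not} reduce to an integral of the form $\int U_{\mu,\xi}^{\star}\bm{\phi}^2$ from the bound $|\bm{\phi}|\leq\frac12 U_{\mu,\xi}$ alone --- pointwise control of $\bm{\phi}$ says nothing about $\nabla\bm{\phi}$. To get $\int|\nabla\bm{\phi}|^2=O(\eps^2\|k\|_\infty^2)$ you must test the projected equation \eqref{NLP} against $\bm{\phi}$: the orthogonality $\int\bm{\phi}\,H_{j;\xi}^\mu=0$ kills the $c_{\eps,j}$ contributions, and what remains is controlled by \eqref{phi:bd}. (The paper's own proof asserts $J_0(U_{\mu,\xi}+\bm{\phi})=J_0(U_{\mu,\xi})+O(\|\bm{\phi}\|_{X_{\mu,\xi}}^2)$ with equally little justification, so you are in good company.)
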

\begin{proof}
The proof follows from direct computations.
\subsection*{Proof of \eqref{expand:j}} By \eqref{jeps}, we have
\begin{align}
\label{sec6:10}
\j_{\eps}\qty(\mu,\xi)=\j_{0}\qty(\mu,\xi)-\frac{N-2}{2N}\eps\int_{\R^N}k \qty(U_{\mu,\xi}+\phi\qty[\mu,\xi])^{\frac{2N}{N-2}},
\end{align}
where
\begin{align*}
\j_{0}\qty(\mu,\xi)=J_{0}\qty(U_{\mu,\xi}+\phi\qty[\mu,\xi]).
\end{align*}

On the one hand, noticing that
$
	-{\Laplace U_{\mu,\xi}}
	-{\alpha}
	\Riesz{U_{\mu,\xi}^p}
	U_{\mu,\xi}^{p-1}
	=0
$, and using \eqref{sec6:07}, we get
\begin{align*}
J_{0}\qty(U_{\mu,\xi}+\phi\qty[\mu,\xi])
	=J_{0}\qty(U_{\mu,\xi})+\O\qty(\norm{\phi\qty[\mu,\xi]}_{X_{\mu,\xi}}^2),	
\end{align*}
which, together with \eqref{phi:bd} and the fact that $J_{0}\qty(U_{\mu,\xi})=J_{0}\qty(U)$, implies that
\begin{align}
\label{sec6:08}
\j_{0}\qty(\mu,\xi)
	=\bar{\j}_0+\norm{k}_{\infty}^2\O\qty(\eps^2).		
\end{align} 

On the other hand, by \eqref{sec6:07}, we have
\begin{align*}
\int_{\R^N}k \qty(U_{\mu,\xi}+\phi\qty[\mu,\xi])^{\frac{2N}{N-2}}
=
\int_{\R^N}k U_{\mu,\xi}^{\frac{2N}{N-2}}
+
\norm{k}_{\infty}\O\qty(\norm{\phi\qty[\mu,\xi]}_{X_{\mu,\xi}}),	
\end{align*}
which, together with \eqref{phi:bd}, implies that
\begin{align}
\label{sec6:09}
\int_{\R^N}k \qty(U_{\mu,\xi}+\phi\qty[\mu,\xi])^{\frac{2N}{N-2}}
=
\Upsilon\qty(\mu,\xi)
+
\norm{k}_{\infty}^2\O\qty(\eps).		
\end{align}

By inserting \eqref{sec6:08} and \eqref{sec6:09} into \eqref{sec6:10}, we show \eqref{expand:j} holds.
\subsection*{Proof of \eqref{expand:pjmu}} By \eqref{jeps}, we have
\begin{align*}
\pdv{\j_\eps}{\mu}\qty(\mu,\xi)
=J_{\eps}'\qty(\omega\qty[\mu,\xi])\qty(\frac{1}{\mu}Z_{0;\mu,\xi}+\pdv{\phi}{\mu}\qty(\mu,\xi)),
\end{align*}
which, together with \eqref{omega:eq}, implies that
\begin{align}
\label{sec6:15}
\pdv{\j_\eps}{\mu}\qty(\mu,\xi)
=-\sum_{j=0}^N c_{\eps,j}\qty(\mu,\xi)
\int_{\R^N} H_{j;\xi}^{\mu}
\qty(\frac{1}{\mu}Z_{0;\mu,\xi}+\pdv{\phi}{\mu}\qty(\mu,\xi)).	
\end{align}
Therefore, we have
\begin{align*}
\pdv{\j_\eps}{\mu}\qty(\mu,\xi)
=-\frac{1}{\mu}c_{\eps,0}\qty(\mu,\xi)
\int_{\R^N} H_{0}Z_{0}
-\sum_{j=0}^N c_{\eps,j}\qty(\mu,\xi)
\int_{\R^N} H_{j;\xi}^{\mu}\pdv{\phi}{\mu}\qty(\mu,\xi).	
\end{align*}

\subsubsection*{Estimates on $-\frac{1}{\mu}c_{\eps,0}\qty(\mu,\xi)
	\int_{\R^N} H_{0}Z_{0}$}
By \eqref{sec6:N}, we have
\begin{align}
\label{sec6:11}
\int_{\R^N}N\qty(\phi\qty[\mu,\xi];\mu,\xi)Z_{0;\mu,\xi}
=\O\qty(\norm{\phi\qty[\mu,\xi]}_{X_{\mu,\xi}}^2).	
\end{align}
By \eqref{sec6:07}, we get
\begin{align}
\label{sec6:12}
\int_{\R^N} k\qty(U_{\mu,\xi}+\phi\qty[\mu,\xi])^{\frac{N+2}{N-2}}Z_{0;\mu,\xi}
%\\
%=&
%\int_{\R^N} k U_{\mu,\xi}^{\frac{N+2}{N-2}}Z_{0;\mu,\xi}
%+
%\norm{k}_{\infty}\O\qty(\norm{\phi\qty[\mu,\xi]}_{X_{\mu,\xi}})
%\\
=
\mu \pdv{\Upsilon}{\mu}\qty(\mu,\xi)+
+
\norm{k}_{\infty}\O\qty(\norm{\phi\qty[\mu,\xi]}_{X_{\mu,\xi}}),
\end{align}
where we used the fact that
\begin{align*}
\int_{\R^N}k\qty(x)U_{\mu,\xi}^{\frac{N+2}{N-2}}\qty(x)Z_{0;\mu,\xi}\qty(x)\odif{x}=
\int_{\R^N}k\qty(\mu y+\xi) U^{\frac{N+2}{N-2}}\qty(y)Z_0\qty(y)\odif{y}.
\end{align*}
By inserting \eqref{sec6:11} and \eqref{sec6:12} into \eqref{cj:6},
and using \eqref{phi:bd}, we obtain
\begin{align}
\label{sec6:13}
-\frac{1}{\mu}c_{\eps,0}\qty(\mu,\xi)\int_{\R^N} H_{0}Z_{0}
=
-\eps \pdv{\Upsilon}{\mu}\qty(\mu,\xi)+
\frac{1}{\mu}\norm{k}_{\infty}^2\O\qty(\eps^2).	
\end{align}
\subsubsection*{Estimates on $-\sum_{j=0}^N c_{\eps,j}\qty(\mu,\xi)
\int_{\R^N} H_{j;\xi}^{\mu}\pdv{\phi}{\mu}\qty(\mu,\xi)$} On the one hand, by \eqref{cj:6} and \eqref{phi:bd}, we get 
\begin{align*}
c_{\eps,j}\qty(\mu,\xi) = \norm{k}_{\infty}\O\qty(\eps).
\end{align*}
On the other hand, using \eqref{phi:bd} again, we have
\begin{align*}
\int_{\R^N} H_{j;\xi}^{\mu}\pdv{\phi}{\mu}\qty(\mu,\xi)
=\norm{k}_{\infty}\O\qty(\eps).
\end{align*}
Therefore, we obtain
\begin{align}
\label{sec6:14}
-\sum_{j=0}^N c_{\eps,j}\qty(\mu,\xi)
\int_{\R^N} H_{j;\xi}^{\mu}\pdv{\phi}{\mu}\qty(\mu,\xi)
=\norm{k}_{\infty}^2\O\qty(\eps).
\end{align}
By inserting \eqref{sec6:13} and \eqref{sec6:14} into \eqref{sec6:15}, we get \eqref{expand:pjmu} holds. 
\subsection*{Proof of \eqref{expand:pjxi}} The proof of \eqref{expand:pjxi} is almost identical to that of \eqref{expand:pjmu}, we omit here.

This ends the proof of \Cref{lem:Jexpand}.
\end{proof}

\section{Proof of the main theorem}
\label{sec:pf}
In this section, we complete the proof \Cref{thm:main}. 
First of all, let us state the following lemma without proof. The readers can refer \cite[Section 5.2.2]{AM2006} for more details of the proof.
\begin{lemm}
\label{lem:Upsilon}
Let $\Upsilon$ be given by \eqref{Upsilon}. Suppose $k$ satisfies \ref{k:reg}-\ref{k:L1}. Then the set
\begin{align}
\label{Kscr}
	\mathscr{K}=\setc{\qty(\mu,\xi)\in\qty(0,+\infty)\times\R^N}{\grad_{\mu,\xi}\Upsilon\qty(\mu,\xi)=0}
\end{align}
is compact in $\qty(0,+\infty)\times\R^N$. Moreover, for any open bounded subset $\mathscr{N}$ of $\qty(0,+\infty)\times\R^N$ with $\mathscr{K}\subset \mathscr{N}$, we have
\begin{align}
\label{deg:Kscr}
	\deg\qty( \grad_{\mu,\xi}\Upsilon, \mathscr{N},0 )
	\neq 0.
\end{align}
\end{lemm}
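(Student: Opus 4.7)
The plan is to follow the strategy of Ambrosetti--Malchiodi~\cite{AM2006} in the present setting. The starting point is to derive, from $U_{\mu,\xi}^{2N/(N-2)}(x)=\mu^N(\mu^2+|x-\xi|^2)^{-N}$ together with an integration by parts on the translation integral, the explicit identities
\begin{equation*}
\nabla_\xi\Upsilon(\mu,\xi)=\int_{\R^N}\nabla k(x)\,U_{\mu,\xi}^{\frac{2N}{N-2}}(x)\,\mathrm{d}x,\qquad \mu\,\partial_\mu\Upsilon(\mu,\xi)=\int_{\R^N}(x-\xi)\cdot\nabla k(x)\,U_{\mu,\xi}^{\frac{2N}{N-2}}(x)\,\mathrm{d}x,
\end{equation*}
together with the Pohozaev-type consequence $\mu\,\partial_\mu\Upsilon+\xi\cdot\nabla_\xi\Upsilon=\int_{\R^N}x\cdot\nabla k(x)\,U_{\mu,\xi}^{2N/(N-2)}(x)\,\mathrm{d}x$. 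These three identities are the only analytical tools required in the sequel.

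For the compactness of $\mathscr{K}$ I would show that no sequence $(\mu_n,\xi_n)\in\mathscr{K}$ can escape in any direction. If $\mu_n\to 0$ with $\xi_n\to\xi_\infty$, a Taylor expansion of $k$ around $\xi_n$ inside the identities above (where the odd $y$-moments of $(1+|y|^2)^{-N}$ vanish by radial symmetry) yields $\nabla_\xi\Upsilon=c_N\nabla k(\xi_n)+O(\mu_n^2)$ and $\partial_\mu\Upsilon=d_N\mu_n\Delta k(\xi_n)+O(\mu_n^3)$, forcing both $\nabla k(\xi_\infty)=0$ and $\Delta k(\xi_\infty)=0$ and contradicting \ref{k:critical}. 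If $\mu_n\to+\infty$, a rescaling $x=\mu_n y+\xi_n$ in the Pohozaev integral together with dominated convergence (the domination supplied by \ref{k:L1}) yields $\mu_n^N\int_{\R^N}x\cdot\nabla k(x)\,U_{\mu_n,\xi_n}^{2N/(N-2)}(x)\,\mathrm{d}x\to\int_{\R^N}x\cdot\nabla k(x)\,\mathrm{d}x<0$, which contradicts the Pohozaev identity. Finally, if $|\xi_n|\to\infty$ with $\mu_n$ bounded, the mass of $U_{\mu_n,\xi_n}^{2N/(N-2)}$ concentrates in the region $|x|\geq\rho$ where $x\cdot\nabla k<0$ by \ref{k:ngtv}, so that the same Pohozaev integral carries a definite negative sign for all sufficiently large $n$.

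Once $\mathscr{K}$ is compact, $\deg(\nabla_{\mu,\xi}\Upsilon,\mathscr{N},0)$ is well-defined and, by excision, independent of the admissible choice of $\mathscr{N}$. To compute it I would construct a homotopy deforming $\nabla_{\mu,\xi}\Upsilon$ into a model field built from $\nabla k$ and $\mu\,\Delta k$, using the small-$\mu$ expansions above on the bulk of $\mathscr{N}$ and the quantitative \emph{a priori} bounds from the compactness argument to prevent zeros from escaping $\partial\mathscr{N}$ during the deformation. On the model field the Brouwer degree factors, via the product formula, into the fixed-point index $i(\nabla k,x)$ at each $x\in\mathrm{Cr}[k]$ times a one-dimensional contribution determined by $\sgn\Delta k(x)$; combined with the Poincar\'e--Hopf identity $\sum_{x\in\mathrm{Cr}[k]}i(\nabla k,x)=(-1)^N$ (implied by \ref{k:ngtv}), the resulting degree is nonzero precisely under the hypothesis in \ref{k:critical}. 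The main obstacle is this last step: the compactness analysis controls each escape regime separately, but weaving these controls into a single admissible, nondegenerate homotopy on $\mathscr{N}$ requires a careful interplay of \ref{k:critical}, \ref{k:ngtv}, and \ref{k:L1}.
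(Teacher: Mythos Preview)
The paper does not give its own proof of this lemma but simply refers the reader to \cite[Section~5.2.2]{AM2006}; your proposal follows precisely that reference's strategy (the Pohozaev-type identity for $\mu\partial_\mu\Upsilon+\xi\cdot\nabla_\xi\Upsilon$, the case analysis for compactness, and the homotopy/Poincar\'e--Hopf computation of the degree), so it is exactly aligned with what the paper invokes. One small point to tighten in your $\mu_n\to+\infty$ case: the dominated-convergence limit of $\mu_n^N\int x\cdot\nabla k\,U_{\mu_n,\xi_n}^{2N/(N-2)}$ is $(1+|c|^2)^{-N}\int x\cdot\nabla k$ with $c=\lim\xi_n/\mu_n$ (still yielding a contradiction when $c$ is finite), while the sub-case $|\xi_n|/\mu_n\to\infty$ should be folded into your $|\xi_n|\to\infty$ argument via \ref{k:ngtv}.
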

%\begin{align}
%\widetilde{\Upsilon}\qty(\mu,\xi)
%=
%\begin{cases}
%	{\Upsilon}\qty(\mu,\xi), & \qty(\mu,\xi)\in\qty(0,+\infty)\times\R^N,\\
%	{\Upsilon}\qty(0,\xi), & \qty(\mu,\xi)\in\{0\}\times\R^N,\\
%	{\Upsilon}\qty(-\mu,\xi), & \qty(\mu,\xi)\in\qty(0,+\infty)\times\R^N.
%\end{cases}	
%\end{align}
Now, we are able to prove our main result.
\begin{proof}[Proof of \Cref{thm:main}]
Let the set $\mathscr{K}$ be given by \eqref{Kscr}. 
By \Cref{lem:Upsilon}, we obtain that the set $\mathscr{K}$ is a compact subset of $\qty(0,+\infty)\times\R^N$. Therefore, we have
\begin{align*}
	\munderbar{\mu}:=\inf_{\qty(\mu,\xi)\in\mathscr{K} }\mu>0.
\end{align*}
Moreover, there exists an open set $\mathscr{N}_0$ satisfying 
$\mathscr{K}\subset \mathscr{N}_0\subset \qty(0,+\infty)\times\R^N$ such that
\begin{align}
\label{sec6:16}
\inf_{\qty(\mu,\xi)\in\bar{\mathscr{N}}_0 }\mu\geq \frac{\munderbar{\mu}}{2}.
\end{align}

By the definition of $\mathscr{K}$, we easily obtain that
\begin{align}
\label{sec6:17}
\munderbar{m}=\inf_{\qty(\mu,\xi)\in\partial\mathscr{N}_0}\abs{\grad_{\mu,\xi}\Upsilon\qty(\mu,\xi)}>0,
\end{align}
which, together with \eqref{sec6:16}, \eqref{expand:pjmu} and \eqref{expand:pjxi}, implies that, for all $\eps>0$ sufficiently small,
\begin{align*}
	\inf_{\qty(\mu,\xi)\in\partial\mathscr{N}_0}\abs{\frac{1}{\eps}\grad_{\mu,\xi}\j_{\eps}\qty(\mu,\xi)}>0.
\end{align*}
Hence, by \eqref{expand:pjmu} and \eqref{expand:pjxi}, and using \eqref{deg:Kscr}, we get, for all $\eps>0$ sufficiently small,
\begin{align*}
\deg\qty( -\frac{1}{\eps}\grad_{\mu,\xi}\j_{\eps}, \mathscr{N}_0,0 )
=
\deg\qty( \grad_{\mu,\xi}\Upsilon, \mathscr{N}_0,0 )
\neq 0,
\end{align*}
which implies that there exists $\qty(\mu_{\eps},\xi_{\eps})\in\mathscr{N}_0 $ such that
\begin{align*}
	\grad_{\mu,\xi}\j_{\eps}\qty(\mu_{\eps},\xi_{\eps})=0.
\end{align*}
By \Cref{lem:c2J}, we obtain that $U_{\mu_{\eps},\xi_{\eps}}+\phi\qty[\mu_{\eps},\xi_{\eps}]$ is a solution of \eqref{NLH-eps}. This ends the proof of \Cref{thm:main}.	
\end{proof}	
	
%Since 
%By 	
%First, notice that $k$ satisfies \ref{k:reg}, by \eqref{sec6:11}, a direct computation implies that, for $j=0,1,\ldots,N$,
%\begin{align}
%\lim\limits_{\mu\to 0+}{\mathbcal{c}}_{j}\qty(\mu,\xi)
%=0,
%\qq{for each}\xi\in\R^N.	
%\end{align}
%Therefore, we can extend ${\mathbcal{c}}_{j}$~($j=0,1,\ldots,N$) to the whole $\R^{N+1}$ by setting 
%\begin{align}
%	\tilde{\mathbcal{c}}_{j}\qty(\mu,\xi) =
%\begin{cases}
%	{\mathbcal{c}}_{j}\qty(\mu,\xi), & \qty(\mu,\xi)\in\qty(0,+\infty)\times\R^N,\\
%	0, &\qty(\mu,\xi)\in \{0\}\times\R^N, \\ 
%	-{\mathbcal{c}}_{j}\qty(-\mu,\xi), &
%	\qty(\mu,\xi)\in\qty(-\infty,0)\times\R^N.
%\end{cases} 	
%\end{align}

\noindent \subsection*{Acknowledgements.}
G. Xu  was supported by National Key Research and Development Program of China (No. 2020YFA0712900) and by NSFC (No. 11831004). G. Lv was supported by the NSF of China grants 12171247, Jiangsu Provincial Double-Innovation Doctor Program JSSCBS20210466 and Qing Lan Project, and Postgraduate Research and Practice Innovation Program of Jiangsu Province (No. KYCX21 0932).
X. Tang was supported by NSFC (No. 12001284).
\appendix
\section{Useful facts related to the bubble $U$}
In this section, we collect several facts related the bubble $U$. First of all, let us introduce the following well-known estimates. we refer to, for example, \cite{TX2022,Lieb1983,GMYZ2022} for proofs of these results.
\begin{lemm}
\label{lem:decay}
Let $\lambda\in\qty({0,N})$ and $\theta\in\qty(N,+\infty)$. Then there exists a constant $C>0$ such that
\begin{equation}
	\label{decay all}
	\int_{\R^N}\frac{1}{\abs{x-y}^{\lambda}}\frac{1}{\jp{y}^{\theta}}\odif{y}
	\leq \frac{C}{\jp{x}^{\lambda}},
\end{equation}
especially, 
\begin{align}
	\label{A:id}
	\int_{\R^N}\frac{1}{\abs{x-y}^{\lambda}}\frac{1}{\jp{y}^{2N-\lambda}}\odif{y}
	=\pi^{\frac{N}{2}}\frac{\Gamma\qty(\frac{N-\lambda}{2})}{\Gamma\qty(\frac{2N-\lambda}{2})}\frac{1}{\jp{x}^{\lambda}}.
\end{align}		
\end{lemm}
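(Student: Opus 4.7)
\textbf{Proof proposal for Lemma \ref{lem:decay}.} My plan is to handle the two assertions separately. For the pointwise decay \eqref{decay all}, I will use a region decomposition; for the explicit identity \eqref{A:id}, I will reduce everything to Beta/Gamma integrals via Laplace representations of the two kernels.

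For \eqref{decay all}, I first reduce to $\abs{x}\geq 2$, since on $\abs{x}\leq 2$ one has $\jp{x}\leq C$ and the integral is uniformly finite (the local singularity is integrable because $\lambda<N$, and the weight decays at infinity because $\theta>N$). For $\abs{x}\geq 2$ I split $\R^N$ into three pieces:
\begin{equation*}
\Omega_1=\setc{y}{\abs{y}\leq \tfrac{\abs{x}}{2}},\qquad \Omega_2=\setc{y}{\abs{y-x}\leq \tfrac{\abs{x}}{2}},\qquad \Omega_3=\R^N\setminus(\Omega_1\cup\Omega_2).
\end{equation*}
On $\Omega_1$ one has $\abs{x-y}\geq \abs{x}/2$, so the integral is bounded by $C\abs{x}^{-\lambda}\int \jp{y}^{-\theta}\odif{y}\leq C\abs{x}^{-\lambda}$ (using $\theta>N$). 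On $\Omega_2$ one has $\abs{y}\geq \abs{x}/2$, hence $\jp{y}^{-\theta}\leq C\abs{x}^{-\theta}$, and $\int_{\Omega_2}\abs{x-y}^{-\lambda}\odif{y}\leq C\abs{x}^{N-\lambda}$, giving a bound $C\abs{x}^{N-\lambda-\theta}\leq C\abs{x}^{-\lambda}$ since $\theta>N$. On $\Omega_3$ both $\abs{y}$ and $\abs{x-y}$ are comparable to $\abs{y}$ when $\abs{y}\geq 2\abs{x}$, so $\abs{x-y}^{-\lambda}\leq C\abs{y}^{-\lambda}$ and the integral $\int_{\abs{y}\geq 2\abs{x}} \abs{y}^{-\lambda-\theta}\odif{y}\leq C\abs{x}^{N-\lambda-\theta}\leq C\abs{x}^{-\lambda}$. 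Combining the three pieces and noting that $\abs{x}\sim\jp{x}$ for $\abs{x}\geq 2$ yields the stated bound.

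For the identity \eqref{A:id}, the plan is to rewrite each factor via a Gamma integral,
\begin{equation*}
\frac{1}{\abs{x-y}^{\lambda}}
=\frac{1}{\Gamma\qty(\lambda/2)}\int_0^{\infty} t^{\lambda/2-1}e^{-t\abs{x-y}^2}\odif{t},
\quad
\frac{1}{\jp{y}^{2N-\lambda}}
=\frac{1}{\Gamma\qty((2N-\lambda)/2)}\int_0^{\infty} s^{(2N-\lambda)/2-1}e^{-s(1+\abs{y}^2)}\odif{s},
\end{equation*}
then exchange the order of integration (Fubini is justified by positivity). The inner $y$-integral becomes Gaussian: completing the square gives
\begin{equation*}
\int_{\R^N}e^{-t\abs{x-y}^2-s\abs{y}^2}\odif{y}=\qty(\frac{\pi}{t+s})^{N/2}e^{-\frac{ts}{t+s}\abs{x}^2}.
\end{equation*}
Substituting the polar-type change of variables $t=ur$, $s=(1-u)r$ with $u\in(0,1)$, $r>0$ (Jacobian $r$) separates the two integrations: the $u$-integral produces a Beta function and the $r$-integral produces a Gamma function evaluated at $N/2$, together with a residual factor $(1+u\abs{x}^2)^{-N/2}$. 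A final substitution in the $u$-integral collapses this factor to $(1+\abs{x}^2)^{-\lambda/2}=\jp{x}^{-\lambda}$, and a careful bookkeeping of Gamma-function factors yields exactly the constant $\pi^{N/2}\Gamma\qty((N-\lambda)/2)/\Gamma\qty((2N-\lambda)/2)$.

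The main obstacle is the Gamma-function bookkeeping in the last step: tracking the correct exponents through the two substitutions and identifying the resulting Beta integral as the ratio of Gamma functions appearing on the right-hand side of \eqref{A:id}. A sanity check is to evaluate both sides at $x=0$, where the left side reduces by polar coordinates to $\omega_{N-1}\cdot\tfrac{1}{2}B\qty(\tfrac{N-\lambda}{2},\tfrac{N}{2})$, which equals the claimed constant by the standard Beta–Gamma identities; matching at a second point (or using the fact that both sides depend only on $\abs{x}$ by rotational invariance) then pins down the identity globally. Both assertions are classical, and we follow the presentations of \cite{TX2022,Lieb1983,GMYZ2022}.
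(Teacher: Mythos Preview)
The paper does not actually prove this lemma; it states the result and refers the reader to \cite{TX2022,Lieb1983,GMYZ2022}. Your proposal supplies the standard arguments and is essentially correct.

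One small omission in the decay estimate: your treatment of $\Omega_3$ only handles the tail $\abs{y}\geq 2\abs{x}$, but $\Omega_3$ also contains the intermediate shell $\tfrac{\abs{x}}{2}<\abs{y}<2\abs{x}$ with $\abs{y-x}>\tfrac{\abs{x}}{2}$. There the integrand is $\leq C\abs{x}^{-\lambda-\theta}$ and the volume is $\leq C\abs{x}^{N}$, so the contribution is again $\leq C\abs{x}^{N-\lambda-\theta}\leq C\abs{x}^{-\lambda}$; this is harmless but should be said explicitly.

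For the identity \eqref{A:id}, your Gamma--Gaussian approach is exactly Lieb's computation in \cite{Lieb1983}. After your change $t=ur$, $s=(1-u)r$ and the $r$-integration, the remaining $u$-integral is
\[
\int_0^1 u^{\frac{\lambda}{2}-1}(1-u)^{\frac{N-\lambda}{2}-1}\qty(1+u\abs{x}^2)^{-\frac{N}{2}}\odif{u}.
\]
The ``final substitution'' you leave unspecified is the Euler-type change $v=\dfrac{u\qty(1+\abs{x}^2)}{1+u\abs{x}^2}$, under which all powers of $1+(1-v)\abs{x}^2$ cancel and the integral becomes $\qty(1+\abs{x}^2)^{-\lambda/2}B\qty(\tfrac{\lambda}{2},\tfrac{N-\lambda}{2})$; the Gamma factors then collapse to the stated constant. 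Equivalently, one may recognise the $u$-integral as $B\qty(\tfrac{\lambda}{2},\tfrac{N-\lambda}{2})\,{}_2F_1\qty(\tfrac{N}{2},\tfrac{\lambda}{2};\tfrac{N}{2};-\abs{x}^2)=B\qty(\tfrac{\lambda}{2},\tfrac{N-\lambda}{2})\qty(1+\abs{x}^2)^{-\lambda/2}$. Either way your outline goes through, and your $x=0$ sanity check correctly verifies the constant.
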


Recall that
\begin{equation*}
\fct{U}{x}
=\frac{1}{\qty({1+\abs{x}^2})^{\frac{N-2}{2}}},
\end{equation*}
and
\begin{align}
\label{A:U:prmt}
U_{\mu,\xi}\qty(x)
=\frac{1}{\mu^{\frac{N-2}{2}}}U\qty(\frac{x-\xi}{\mu})
%=
%\qty(\frac{1}{\mu+\frac{\abs{x-\xi}^2}{\mu}})^{\frac{N-2}{2}}
,
\text{~~where~~} \qty(\mu,\xi)\in\qty(0,+\infty)\times\R^N.
\end{align}
Let us list several identities related to the bubble $U$.
\begin{lemm}
\label{lem:identities}
Let the functions $Z_j$ ($j=0,1,\ldots,N$) and $H_m$ ($m=0,1,\ldots,N$) be given by \eqref{Zj} and \eqref{Hj} respectively. Then we have:
\begin{gather}
\label{H2Z:int}
{Z_j}\qty(x)=\frac{\qty(N+2)}{\qty(N-2)\omega_N}\int_{\R^N}
\frac{\fct{H_j}{y}}{\abs{x-y}^{N-2}}\odif{y},
\\
\label{ZjHm:vns}
\int_{\R^N} Z_j H_m =0, \qq{with} j\neq m,
\\
\label{Z0H0}
\int_{\R^N} Z_0 H_0 = \frac{\qty(N-2)^2}{4}\int_{\R^N}\frac{\qty(\abs{x}^2-1)^2}{\jp{x}^{2N+4}}\odif{x},
\\
\label{ZjHj}
\int_{\R^N} Z_j H_j =\frac{\qty(N-2)^2}{N}\int_{\R^N}\frac{\abs{x}^2}{\jp{x}^{2N+4}}\odif{x},
\qq{with} j=1,2,\ldots,N.	
\end{gather}	
\end{lemm}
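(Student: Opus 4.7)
The plan is to derive all four identities in \Cref{lem:identities} from two basic inputs: the PDE satisfied by the tangent functions $Z_m$, and the explicit formulas in \eqref{Zj}. To begin, I would prove the representation formula \eqref{H2Z:int}. Since $U_{\mu,\xi}$ solves $-\Laplace u = N(N-2)u^{(N+2)/(N-2)}$ for every $(\mu,\xi)$, differentiating this equation in $\mu$ and in each $\xi_j$ and evaluating at $(\mu,\xi)=(1,0)$ yields
\begin{equation*}
-\Laplace Z_m = N(N-2)\cdot\tfrac{N+2}{N-2}\,U^{4/(N-2)}Z_m = N(N+2)H_m, \qquad m=0,1,\ldots,N.
\end{equation*}
From \eqref{Zj} one reads off the pointwise estimates $|Z_m(x)|\leq C\jp{x}^{2-N}$ and $|H_m(x)|\leq C\jp{x}^{-N-2}$, so $Z_m\in X$ and $H_m\in Y$. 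Then \Cref{lem:possion} applied with $g=N(N+2)H_m$ and its uniqueness statement identify $Z_m$ with the Newton potential of $g$; the constant $\frac{N(N+2)}{N(N-2)\omega_N}=\frac{N+2}{(N-2)\omega_N}$ is exactly the one appearing in \eqref{H2Z:int}.

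Second, I would prove the off-diagonal vanishing \eqref{ZjHm:vns} by parity. The function $Z_0$ is radial, whereas each $Z_j$ with $j\geq 1$ is odd in $x_j$ and even in every other coordinate. Since $H_m=U^{4/(N-2)}Z_m$ multiplies $Z_m$ by the radial weight $\jp{x}^{-4}$, $H_m$ inherits the parity of $Z_m$. Hence, whenever $j\neq m$, the product $Z_j H_m$ is odd with respect to at least one coordinate reflection, and integrability (guaranteed by the decay above) plus Fubini force $\int_{\R^N} Z_j H_m=0$.

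Third, I would obtain the diagonal identities \eqref{Z0H0} and \eqref{ZjHj} by direct substitution from \eqref{Zj}. A short computation gives
\begin{equation*}
Z_0 H_0 = \frac{(N-2)^2}{4}\,\frac{(\abs{x}^2-1)^2}{\jp{x}^{2N+4}}, \qquad Z_j H_j = (N-2)^2\,\frac{x_j^2}{\jp{x}^{2N+4}}, \quad j=1,\ldots,N.
\end{equation*}
Integrating the first expression yields \eqref{Z0H0} verbatim. For the second, the rotational symmetry of the weight $\jp{x}^{-2N-4}$ gives $\int_{\R^N} x_j^2\jp{x}^{-2N-4}\,\odif{x} = \frac{1}{N}\int_{\R^N}\abs{x}^2\jp{x}^{-2N-4}\,\odif{x}$, whence \eqref{ZjHj}.

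\textbf{Main obstacle.} The argument is essentially bookkeeping: every ingredient is an explicit formula, a parity symmetry, or an invocation of \Cref{lem:possion}. The only mildly delicate point is in Step 1, where one must carefully verify $H_m\in Y$ and $Z_m\in X$ from \eqref{Zj} so that the uniqueness clause of \Cref{lem:possion} applies and legitimizes identifying $Z_m$ with the Newton potential of $N(N+2)H_m$; all other estimates are immediate from the explicit formulas for $U$ and $Z_m$.
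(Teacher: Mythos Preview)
Your proposal is correct and follows essentially the same route as the paper: differentiate the equation for $U_{\mu,\xi}$ to obtain $-\Laplace Z_m = N(N+2)H_m$ and invoke \Cref{lem:possion} for \eqref{H2Z:int}; then use the explicit formulas \eqref{Zj} together with parity/rotational symmetry for \eqref{ZjHm:vns}--\eqref{ZjHj}. The only difference is cosmetic---you spell out the parity argument and the membership $Z_m\in X$, $H_m\in Y$ where the paper simply writes ``we easily check.''
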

\begin{proof}
Recall, by direct calculations, for any $\qty(\mu,\xi)\in\qty(0,+\infty)\times\R^N$, 
\begin{align}
	\label{U:id:prmt}
	-\Laplace U_{\mu,\xi}\qty(x)=N\qty(N-2)U^{\frac{N+2}{N-2}}_{\mu,\xi}\qty(x).
\end{align}
By differentiating \eqref{U:id:prmt} with respect to $\mu$ and $\xi_j$~($j=1,2,\ldots,N$) at $\mu=1$ and $\xi=0$, we get,
for $j=0,1,\ldots,N$,
\begin{equation}
	\label{H2Z}
	-\Laplace Z_j = N\qty(N+2)H_j,
\end{equation}
which, together with \Cref{lem:possion}, implies that \eqref{H2Z:int} holds.

Next, by \eqref{Zj} and \eqref{Hj}, we easily check that \eqref{ZjHm:vns}, \eqref{Z0H0} hold, and 
\begin{align*}
	\int_{\R^N} Z_j H_j =\int_{\R^N} Z_1 H_1, 
	\qq{with} j=2,3,\ldots,N.
\end{align*}
Hence, we have
\begin{align*}
	\int_{\R^N}H_j Z_j 
	= \frac{1}{N}\sum_{i=1}^N\int_{\R^N}H_i Z_i
	=\frac{\qty(N-2)^2}{N}\int_{\R^N}\frac{\abs{x}^2}{\jp{x}^{2N+4}}\odif{x},
\end{align*}
which implies that \eqref{ZjHj} holds. This ends the proof of \Cref{lem:identities}.	
\end{proof}
By direct calculations, we verify that
\begin{lemm}
Let $\bar{Z}_{m,j}$ ( $m=0,1,\ldots,N$, $j=0,1,\ldots,N$) be given by \eqref{Zbar}. Then, we have
\begin{gather*}
\bar{Z}_{0,{0}}\qty(x)
=
\frac{N\qty(N-2)}{4}
\frac{ \qty(\abs{x}^2-1 )^2 }{ \jp{x}^{N+2} }
-
\frac{N-2}{2}\frac{1}{\jp{x}^{N-2}},
\\
\bar{Z}_{0,{j}}\qty(x)
=
\frac{ N\qty(N-2)}{2}\frac{\qty(\abs{x}^2-1)x_j}{\jp{x}^{N+2}}
-
\qty(N-2)\frac{x_j}{\jp{x}^N},
\\
\bar{Z}_{m,j}\qty(x)
=
-\qty(N-2)\frac{\delta_{j,m}}{\jp{x}^N}
-\frac{N\qty(N-2)}{2}\frac{x_m x_j}{\jp{x}^{N+2}},	
\end{gather*}
where $j=1,\ldots,N$, $m=1,\ldots,N$, and
$
\delta_{j,m}=
\begin{cases}
	1,& j=m,
	\\
	0, & j\neq m.
\end{cases}	
$
\end{lemm}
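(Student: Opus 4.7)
The plan is to verify the three identities by direct differentiation once one has $Z_{m;\mu,\xi}(x)$ in closed form. Starting from the explicit expressions \eqref{Zj} for $Z_{0}$ and $Z_{j}$ and substituting into the defining relation $Z_{m;\mu,\xi}(x)=\mu^{-(N-2)/2}Z_{m}\bigl((x-\xi)/\mu\bigr)$, a short simplification yields
\begin{align*}
Z_{0;\mu,\xi}(x) &= \tfrac{N-2}{2}\,\mu^{N/2-1}\,\frac{|x-\xi|^{2}-\mu^{2}}{(\mu^{2}+|x-\xi|^{2})^{N/2}}, \\
Z_{m;\mu,\xi}(x) &= (N-2)\,\mu^{N/2}\,\frac{(x-\xi)_{m}}{(\mu^{2}+|x-\xi|^{2})^{N/2}}, \qquad m=1,\ldots,N.
\end{align*}

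From these formulas, each $\bar{Z}_{m,j}$ is computed by differentiating with respect to the prescribed parameter and then setting $(\mu,\xi)=(1,0)$. For $\bar{Z}_{0,0}$, I would apply the product rule to the three $\mu$-dependent factors $\mu^{N/2-1}$, $|x-\xi|^{2}-\mu^{2}$, and $(\mu^{2}+|x-\xi|^{2})^{-N/2}$; evaluation at $(1,0)$ produces a combination of terms of orders $\jp{x}^{-(N-2)}$, $\jp{x}^{-N}$, and $\jp{x}^{-(N+2)}$, which is then collapsed into the two-term form of the statement using $|x|^{2}=\jp{x}^{2}-1$. For $\bar{Z}_{0,j}$ and $\bar{Z}_{m,j}$ with $j\geq 1$ only the two $\xi$-dependent factors contribute: differentiating $(x-\xi)_{m}$ produces $-\delta_{jm}$, while differentiating $(\mu^{2}+|x-\xi|^{2})^{-N/2}$ produces $N(x-\xi)_{j}(\mu^{2}+|x-\xi|^{2})^{-N/2-1}$ by the chain rule, and the stated formulas follow directly from evaluation at $(1,0)$.

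The computation is entirely mechanical calculus, with no substantial obstacle. The only point of care is the bookkeeping in the $\bar{Z}_{0,0}$ case, where the repeated substitutions $|x|^{2}-1=\jp{x}^{2}-2$ and $|x|^{2}=\jp{x}^{2}-1$ must be carried out consistently in order to condense the three-term output of the product rule into the compact two-term form written in the lemma; the other two families of derivatives admit essentially no simplification beyond the initial differentiation.
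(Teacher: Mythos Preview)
Your proposal is correct and takes exactly the same approach as the paper: the paper's own proof consists only of the sentence ``By direct calculations, we verify that'', so your plan of writing $Z_{m;\mu,\xi}$ explicitly from \eqref{Zj} and applying the product rule before evaluating at $(\mu,\xi)=(1,0)$ is precisely what is intended. (One caution: if you carry the computation for $\bar Z_{m,j}$ with $m,j\geq 1$ to the end you will obtain $+N(N-2)\,x_mx_j/\jp{x}^{N+2}$ rather than the coefficient printed in the third display, so be prepared to find a sign/factor typo in the stated formula.)
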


Next, let us turn our attention to the function $U_{\mu,\xi}$.
By a simple change of variables, using \eqref{A:id}, 
we easily obtain that
\begin{align}
\label{A:id:prmt}
\Riesz{U_{\mu,\xi}^{p}}\qty(x)
=
\pi^{\frac{N}{2}}\frac{\Gamma\qty(\frac{N-\lambda}{2})}{\Gamma\qty(\frac{2N-\lambda}{2})}
U_{\mu,\xi}^{\frac{\lambda}{N-2}}\qty(x).	
\end{align}
Moreover, we have  
\begin{lemm}
For any $\qty(\mu,\xi)\in\qty(0,+\infty)\times\R^N$, we have
\begin{align}
\label{tHbZ:id}
\int_{\R^N}\widetilde{H}^{\mu}_{m,j;\xi}Z_{j;\mu,\xi}
=
-
\int_{\R^N}H^{\mu}_{m;\xi}\bar{Z}_{m,j;\mu,\xi},
\qq{where} m,j=0,1,\ldots,N.		
\end{align}
where $H_m$, $Z_m$, $\bar{Z}_{m,j}$ and $\widetilde{H}_{m,j}$ be given by \eqref{Hj}, \eqref{Zj}, \eqref{Zbar} and \eqref{Htilde} respectively.
\end{lemm}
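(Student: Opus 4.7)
The plan is to reduce the identity to the base point $(\mu, \xi) = (1, 0)$ by change of variables, and then verify it using the explicit formulas for $\widetilde{H}_{m,j}$ and $\bar{Z}_{m,j}$ obtained from the chain rule, combined with integration by parts.

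\textit{Step 1 (reduction to $(1,0)$).} Using the scaling conventions $\varphi^{\mu}_{;\xi}(x) = \mu^{-(N+2)/2}\varphi((x-\xi)/\mu)$ and $\psi_{;\mu,\xi}(x) = \mu^{-(N-2)/2}\psi((x-\xi)/\mu)$, the change of variables $y = (x-\xi)/\mu$ with Jacobian $\mu^N$ cancels the total scaling factor $\mu^{-N} = \mu^{-(N+2)/2}\cdot\mu^{-(N-2)/2}$, giving
\[
\int_{\R^N}\varphi^{\mu}_{;\xi}(x)\,\psi_{;\mu,\xi}(x)\,dx = \int_{\R^N}\varphi(y)\,\psi(y)\,dy.
\]
Applied to both sides of \eqref{tHbZ:id}, this reduces the claim to
\[
\int_{\R^N} \widetilde{H}_{m,j}(y)\, Z_j(y)\,dy = -\int_{\R^N} H_m(y)\, \bar{Z}_{m,j}(y)\,dy.
\]

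\textit{Step 2 (explicit derivatives).} Differentiating the rescaled families via the chain rule at $(\mu, \xi) = (1, 0)$, I would show: for $j \in \{1, \dots, N\}$, $\widetilde{H}_{m,j}(y) = -\partial_{y_j} H_m(y)$ and $\bar{Z}_{m,j}(y) = -\partial_{y_j} Z_m(y)$; and for $j = 0$, $\widetilde{H}_{m,0}(y) = -\tfrac{N+2}{2}H_m(y) - y\cdot\nabla H_m(y)$ together with $\bar{Z}_{m,0}(y) = -\tfrac{N-2}{2}Z_m(y) - y\cdot\nabla Z_m(y)$, where the extra weight factors come from differentiating the prefactors $\mu^{-(N+2)/2}$ and $\mu^{-(N-2)/2}$.

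\textit{Step 3 (integration by parts).} For $j \geq 1$, integrating by parts on the LHS yields $\int H_m\,\partial_j Z_j\,dy$, while substituting the formula for $\bar{Z}_{m,j}$ rewrites the RHS as $\int H_m\,\partial_j Z_m\,dy$. For $j = 0$, combining the leading constants $\tfrac{N+2}{2} + \tfrac{N-2}{2} = N$ with the IBP identity $\int y\cdot\nabla(H_m Z_j)\,dy = -N\int H_m Z_j\,dy$ (from $\operatorname{div}(y) = N$ and the polynomial decay of $H_m Z_j$) reduces the claim to the orthogonality relation $\int H_m Z_j = \delta_{m,j}\int H_m Z_m$ from \eqref{ZjHm:vns}, \eqref{Z0H0}, \eqref{ZjHj}.

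\textit{Main obstacle.} The principal difficulty is the $j \geq 1$ verification, namely $\int H_m\,\partial_j Z_j\,dy = \int H_m\,\partial_j Z_m\,dy$, which reconciles the $Z_j$ in the stated LHS with the $Z_m$ that one would naively expect from pairing with $H_m$. The verification proceeds case by case using $\mathrm{O}(N)$-symmetry: for $m = j$ the two integrals coincide trivially; for $m,j \geq 1$ with $m \neq j$, both integrands are odd in $y_m$ (or $y_j$), so both sides vanish; the cases involving the index $0$ require a careful inspection of the radial structure of $H_0, Z_0$ against the angular structure of $H_j, Z_j$, using the explicit coordinate formulas given by the appendix lemma for $\bar{Z}_{m,j}$.
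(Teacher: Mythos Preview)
Your instinct that something is off in Step~3 is correct, but the obstacle is not a subtlety to be overcome --- it is a typo in the statement. The identity as written, with $Z_{j;\mu,\xi}$ on the left, is false: take $m=0$ and $j\in\{1,\dots,N\}$ at $(\mu,\xi)=(1,0)$. The right-hand side is $\int H_0\,\partial_{y_j}Z_0=0$ by parity ($H_0$ and $Z_0$ are radial, so $\partial_{y_j}Z_0$ is odd in $y_j$), whereas the left-hand side is $\int H_0\,\partial_{y_j}Z_j$, which by rotational symmetry equals $\tfrac{1}{N}\int H_0\sum_i\partial_{y_i}Z_i=\tfrac{1}{N}\int H_0(-\Delta U)=(N-2)\int H_0\,U^{(N+2)/(N-2)}$, a nonzero Beta integral. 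Your case-by-case program in the ``main obstacle'' paragraph therefore cannot close in this case. The intended statement has $Z_{m;\mu,\xi}$ in place of $Z_{j;\mu,\xi}$; this is also the form actually used later in the paper (and in the application there one has $m=j$ anyway).

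Once the index is corrected, your Steps~1--3 go through with no obstacle: for $j\ge1$ both sides reduce to $\int H_m\,\partial_{y_j}Z_m$ after one integration by parts, and for $j=0$ your computation with $\tfrac{N+2}{2}+\tfrac{N-2}{2}=N$ and $\int y\cdot\nabla(H_mZ_m)=-N\int H_mZ_m$ becomes a tautology, so the orthogonality relations are not even needed. The paper's proof is the same idea packaged more efficiently: the integral $\int_{\R^N}H^{\mu}_{m;\xi}\,Z_{m;\mu,\xi}$ is independent of $(\mu,\xi)$ by the change of variables in your Step~1, and differentiating this constant in $\mu$ or $\xi_j$ gives $\int\widetilde H^{\mu}_{m,j;\xi}Z_{m;\mu,\xi}+\int H^{\mu}_{m;\xi}\bar Z_{m,j;\mu,\xi}=0$ directly, without writing out the chain-rule formulas or integrating by parts.
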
 
\begin{proof}
By a change of variables, we have
\begin{align*}
\int_{\R^N}H^{\mu}_{m;\xi}Z_{m;\mu,\xi}
=
\int_{\R^N}H_{m}Z_{m},
\text{~for any~} \mu\in\qty(0,+\infty),~~\xi\in\R^N.	
\end{align*}
By taking partial derivative with respect to $\mu$ and $\xi_j~\qty(j=1,2\ldots,N)$ in the above equation, we get that
\eqref{tHbZ:id} holds.
\end{proof}
By a standard Taylor expansion argument, we have the following lemma. 
\begin{lemm}
Let $\theta,~\beta\in\R$. Then
\begin{gather}
\label{A:a-ineq}
\qty(1+a)^{\theta}=1+\theta a+\O\qty(a^2),
\qq{as} a\to 0,
\\
\label{A:ab-ineq}
\qty(1+a)^{\theta}\qty(1+b)^{\beta}=1+\theta a + \beta b +
\O\qty( a^2+b^2 ),
\qq{as} a\to 0 \text{~and~} b\to 0.	
\end{gather}	
\end{lemm}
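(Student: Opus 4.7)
The plan is to derive both expansions from elementary calculus — specifically Taylor's theorem with Lagrange (or integral) remainder applied to the smooth function $f(a)=(1+a)^\theta$ on a neighborhood of $0$ — and then obtain the two-variable identity from the one-variable one by a direct multiplication.

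First I would fix a radius $r\in(0,1)$ (say $r=\tfrac12$) and work on the interval $|a|\le r$, where $1+a$ is bounded away from zero and $f(a)=(1+a)^\theta$ is $C^\infty$. By Taylor's theorem applied to $f$ at $0$ up to order $2$, there exists a constant $C=C(\theta,r)>0$, depending only on $\sup_{|a|\le r}|f''(a)|=\sup_{|a|\le r}|\theta(\theta-1)(1+a)^{\theta-2}|$, such that $|f(a)-f(0)-f'(0)a|\le C a^2$ for all $|a|\le r$. Since $f(0)=1$ and $f'(0)=\theta$, this is precisely the assertion $(1+a)^\theta=1+\theta a+\O(a^2)$ as $a\to 0$, which proves \eqref{A:a-ineq}.

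For \eqref{A:ab-ineq}, I would apply \eqref{A:a-ineq} to each factor separately: $(1+a)^\theta=1+\theta a+R_1$ with $|R_1|\le C_1 a^2$ and $(1+b)^\beta=1+\beta b+R_2$ with $|R_2|\le C_2 b^2$, for $|a|,|b|$ sufficiently small. Multiplying out gives
\begin{align*}
(1+a)^\theta(1+b)^\beta
&=1+\theta a+\beta b+\theta\beta ab+R_2+\theta a R_2+R_1+\beta b R_1+R_1 R_2.
\end{align*}
The cross term $\theta\beta ab$ satisfies $|\theta\beta ab|\le \tfrac{|\theta\beta|}{2}(a^2+b^2)$ by the AM-GM inequality, and each of the remaining terms is bounded by a constant multiple of $a^2+b^2$ (using $|a|,|b|\le 1$ to absorb higher powers). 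Collecting these estimates yields $(1+a)^\theta(1+b)^\beta=1+\theta a+\beta b+\O(a^2+b^2)$ as $(a,b)\to(0,0)$, establishing \eqref{A:ab-ineq}.

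There is no real obstacle here; the only care required is to make the implicit constants in the $\O$-notation uniform on a fixed small neighborhood of the origin, which is automatic from the smoothness of $f$ on $[-r,r]$. The argument is entirely standard and requires no input from the preceding sections of the paper.
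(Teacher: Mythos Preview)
Your proof is correct and is precisely the ``standard Taylor expansion argument'' that the paper invokes without giving any details. There is nothing to add: the paper states the lemma without proof, and your derivation supplies exactly the elementary justification it omits.
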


We end this section with the following estimates.
\begin{lemm}
\label{lem:U:expand}
Let $\qty(\mu_0,\xi_0)\in\qty(0,+\infty)\times\R^N$ be fixed. 
Then, as $\qty(\mu,\xi)\to\qty(\mu_0,\xi_0)$, we have 
\begin{align}
\label{U:expand}
\notag
U_{\mu,\xi} \qty(x)
=&
U_{\mu_0,\xi_0} \qty(x)
+ Z_{0;\mu_0,\xi_0}\qty(x)\frac{\mu-\mu_0}{\mu_0}
+
\sum_{j=1}^{N} Z_{j;\mu_0,\xi_0}\qty(x)\frac{\xi_j-\xi_{0j}}{\mu_0}
\\
&+
U_{\mu_0,\xi_0}\qty(x)
\O\qty( \frac{\abs{\mu-\mu_0}^2}{\mu_0^2}
+\frac{1}{\jp{\frac{x-\xi_0}{\mu_0}}^2}\frac{\abs{\xi-\xi_0}^2}{\mu_0^2} ),
\end{align}
\begin{align}
\notag
\label{Zj:expand}
Z_{j;\mu,\xi} \qty(x)
=&
Z_{j;\mu_0,\xi_0} \qty(x)
+ \bar{Z}_{j,0;\mu_0,\xi_0}\qty(x)\frac{\mu-\mu_0}{\mu_0}
+
\sum_{m=1}^{N} Z_{j,m;\mu_0,\xi_0}\qty(x)\frac{\xi_m-\xi_{0m}}{\mu_0}
\\
&+
U_{\mu_0,\xi_0}\qty(x)
\O\qty( \frac{\abs{\mu-\mu_0}^2}{\mu_0^2}
+\frac{1}{\jp{\frac{x-\xi_0}{\mu_0}}^2}\frac{\abs{\xi-\xi_0}^2}{\mu_0^2} ),
\end{align}
\begin{align}
\notag
\label{Hm:expand}
H^{\mu}_{m;\xi} \qty(x)
=&
H^{\mu_0}_{m;\xi_0} \qty(x)
+ \widetilde{H}^{\mu_0}_{m,0;\xi_0}\qty(x)\frac{\mu-\mu_0}{\mu_0}
+
\sum_{m=1}^{N} \widetilde{H}^{\mu_0}_{m,j;\mu_0,\xi_0}\qty(x)\frac{\xi_j-\xi_{0}}{\mu_0}
\\
&+
U_{\mu_0,\xi_0}^{\frac{N+2}{N-2}}\qty(x)
\O\qty( \frac{\abs{\mu-\mu_0}^2}{\mu_0^2}
+\frac{1}{\jp{\frac{x-\xi_0}{\mu_0}}^2}\frac{\abs{\xi-\xi_0}^2}{\mu_0^2} ).	
\end{align}

%\begin{align}
%&
%\abs{
%	U_{\mu,\xi}^{\theta}\qty(x)-U_{\mu_0,\xi_0}^{\theta}\qty(x)
%	-\theta U_{\mu_0,\xi_0}^{\theta-1}\qty(x) Z_{0;\mu_0,\xi_0}\qty(x)\frac{\mu-\mu_0}{\mu_0}
%	-\sum_{j=1}^{N}\theta U_{\mu_0,\xi_0}^{\theta-1}\qty(x) Z_{j;\mu_0,\xi_0}\qty(x)\frac{\xi-\xi_0}{\mu_0}
%}
%\\
%\leq
%&
%C U_{\mu_0,\xi_0}^{\theta}\qty(x)
%\qty( \frac{\abs{\mu-\mu_0}^2}{\mu_0^2}+\frac{\abs{\xi-\xi_0}^2}{\mu_0^2} )			
%\end{align}
\end{lemm}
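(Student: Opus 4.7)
The plan is to perform a second-order Taylor expansion in the parameters $(\mu,\xi)$ after a reparametrization that absorbs the scaling. Introduce $s=(\mu-\mu_0)/\mu_0$, $t=(\xi-\xi_0)/\mu_0\in\R^N$, and $y=(x-\xi_0)/\mu_0$. From the definition of the $;\mu,\xi$ notation,
$$U_{\mu,\xi}(x) = \mu_0^{-(N-2)/2}(1+s)^{-(N-2)/2}\,U\!\left(\frac{y-t}{1+s}\right),$$
and analogous identities hold for $Z_{m;\mu,\xi}(x)$ (same overall factor $\mu_0^{-(N-2)/2}$, with $U$ replaced by $Z_m$) and for $H^{\mu}_{m;\xi}(x)$ (overall factor $\mu_0^{-(N+2)/2}$, with $U$ replaced by $H_m$). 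In each case the right-hand side is a smooth function of $(s,t)$ near the origin, valued in a function of $y$, so Taylor's formula with integral remainder applies.

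First I would identify the first-order terms. A direct calculation gives
$$\partial_s\bigl[(1+s)^{-(N-2)/2}\,U((y-t)/(1+s))\bigr]\big|_{(0,0)}=-\tfrac{N-2}{2}U(y)-y\cdot\nabla U(y)=Z_0(y),$$
while $\partial_{t_j}[\cdots]|_{(0,0)}=-\partial_j U(y)=Z_j(y)$; multiplying by $\mu_0^{-(N-2)/2}$ and rewriting in terms of $x$ reproduces the first-order contributions $Z_{0;\mu_0,\xi_0}(x)\,(\mu-\mu_0)/\mu_0$ and $Z_{j;\mu_0,\xi_0}(x)\,(\xi_j-\xi_{0j})/\mu_0$ in \eqref{U:expand}. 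The same procedure applied to $Z_{m;\mu,\xi}$ and $H^{\mu}_{m;\xi}$ produces exactly the $\bar Z_{m,0}$, $\bar Z_{m,j}$ and $\widetilde H_{m,0}$, $\widetilde H_{m,j}$ terms in \eqref{Zj:expand} and \eqref{Hm:expand}, by the very definitions \eqref{Zbar} and \eqref{Htilde} together with the chain rule.

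The main work, and the step I expect to be the principal (but purely technical) obstacle, is estimating the Taylor remainder. The key observation is that each differentiation in $t_j$ lands on $U$ (resp.\ $Z_m$, $H_m$) as a \emph{spatial} derivative, and spatial derivatives of the bubble profile and its relatives each gain one power of $\jp{y}^{-1}$ in their decay; whereas differentiation in $s$ produces combinations of the profile, $y\cdot\nabla(\text{profile})$, and $y\otimes y:D^2(\text{profile})$, all of which decay no slower than the profile itself. This yields uniform pointwise bounds of the form
$$|\partial_s^2 F|,\quad \jp{y}\,|\partial_s\partial_{t_j}F|,\quad \jp{y}^2\,|\partial_{t_j}\partial_{t_k}F|\;\leq\;C\,F_0(y),$$
for $(s,t)$ in a small neighborhood of $(0,0)$, with $F_0=U$ in the cases \eqref{U:expand}--\eqref{Zj:expand} and $F_0=U^{(N+2)/(N-2)}$ in \eqref{Hm:expand}. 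Combining these with the AM-GM inequality $|s||t_j|/\jp{y}\leq\tfrac{1}{2}(s^2+t_j^2/\jp{y}^2)$ absorbs every quadratic cross term into the symmetric shape $s^2+|t|^2/\jp{y}^2$ required by the statement; restoring the overall prefactor converts $F_0(y)$ back into $U_{\mu_0,\xi_0}(x)$ or $U^{(N+2)/(N-2)}_{\mu_0,\xi_0}(x)$, delivering the claimed remainders. All that remains is the bookkeeping of finitely many explicit second partial derivatives, checking in each that the weight $\jp{y}^{-|\beta|}$ matches the number $|\beta|$ of $t$-differentiations.
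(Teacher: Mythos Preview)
Your proposal is correct and follows essentially the same route as the paper: both reduce to an expansion around $(1,0)$ via the substitution $s=(\mu-\mu_0)/\mu_0$, $t=(\xi-\xi_0)/\mu_0$, $y=(x-\xi_0)/\mu_0$, then Taylor expand and track the extra $\jp{y}^{-1}$ gained by each $t$-derivative. The only cosmetic difference is that the paper expands the explicit formula $\nu+|y-\zeta|^2/\nu$ by hand and then applies $(1+a)^{-(N-2)/2}=1-\tfrac{N-2}{2}a+\O(a^2)$, whereas you package the same computation as a second-order Taylor remainder with bounds on the Hessian; the resulting remainder estimates coincide.
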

\begin{proof}
We only verify \eqref{U:expand} holds. The expression \eqref{Zj:expand} and \eqref{Hm:expand} can be obtained in a similar manner. 

First, we claim that
\begin{align}
\label{U0:expand}
U_{\nu,\zeta}\qty(y)
=U\qty(y)
+Z_0\qty(y)\qty(\nu-1)
+\sum_{m=1}^N Z_m\qty(y)\zeta_m
+\frac{1}{\jp{y}^{N-2}}
\O\qty( \abs{\nu-1}^2+\frac{\abs{\zeta}^2}{\jp{y}^2} ),	
\end{align}
as $\nu\to 1$ and $\zeta\to  0$.

Indeed, using the facts
\begin{align*}
	\frac{1}{\nu}=\frac{1}{1+\qty(\nu-1)}
	=
	1-\qty(\nu-1)+\O\qty(\qty(\nu-1)^2),
\qq{as} \nu \to 1. 	
\end{align*}
and
\begin{align*}
	\frac{\abs{y-\zeta}^2}{\mu}
	=
	\abs{y}^2-2y\zeta-\qty(\nu-1)\abs{y}^2
	+ \O\qty({\jp{y}^2}\qty(\nu-1)^2+{\abs{\zeta}^2}),
~~\text{as}~~ \nu \to 1, \zeta\to  0,
\end{align*}
we get
\begin{align*}
\nu+\frac{\abs{y-\zeta}^2}{\nu}
=
\jp{y}^2
\qty( 1 -\qty(\nu-1)\frac{\abs{y}^2-1}{\jp{y}^2}
-2\zeta\frac{y}{\jp{y}^2}
+\O\qty(\qty(\nu-1)^2+\frac{\abs{\zeta}^2}{\jp{y}^2}) ),		
\end{align*}
as $\nu\to 1$ and $\zeta\to  0$. Therefore, by \eqref{A:a-ineq}, we have
\begin{align*}
\qty(\frac{1}{\nu+\frac{\abs{y-\zeta}^2}{\nu}})^{\frac{N-2}{2}}
=
&
\frac{1}{\jp{y}^{N-2}}
+
\frac{N-2}{2}\frac{1}{\jp{y}^{N-2}}\frac{\abs{y}^2-1}{\jp{y}^2}\qty(\nu-1)
\\
+&
\qty(N-2)\frac{1}{\jp{y}^{N-2}}\frac{y\zeta}{\jp{y}^2}
+
\frac{1}{\jp{y}^{N-2}}
\o\qty(\qty(\nu-1)^2+\frac{\abs{\zeta}^2}{\jp{y}^2}),	
\end{align*}
which implies that \eqref{U0:expand} holds.

Next, by letting $\nu=\frac{\mu}{\mu_0}$ and $\zeta = \frac{\xi-\xi_0}{\mu_0}$ and $y=\frac{x-\xi_0}{\mu_0}$ in \eqref{U0:expand}, and multiplying \eqref{U0:expand} with $\frac{1}{\mu_0^{\frac{N-2}{2}}}$, we get \eqref{U:expand} holds. This ends the proof of \Cref{lem:U:expand}.
\end{proof}	

\section{Proof of \Cref{lemm:C1}}
\label{app:C1}
Let $\qty(\mu_0,\xi_0)\in\qty(0,+\infty)\times\R^N$ be fixed. Throughout \Cref{app:C1}, we always assume $(\mu,\xi)\in \qty(0,+\infty)\times \R^N$ is sufficiently close to $\qty(\mu_0,\xi_0)$ such that
\begin{align*}
\frac{1}{2}\mu_0\jp{\frac{x-\xi_0}{\mu_0}}^2
<\mu\jp{\frac{x-\xi}{\mu}}
<2\mu_0\jp{\frac{x-\xi_0}{\mu_0}}^2,
~~x\in\R^N.	
\end{align*}
By direct computations, we easily check that
\begin{gather}
\label{B:phi:bd}
\frac{1}{2^{N-2}}\norm{\varphi}_{X_{\mu_0,\xi_0}}
<\norm{\varphi}_{X_{\mu ,\xi }}<{2^{N-2}}\norm{\varphi}_{X_{\mu_0,\xi_0}},	
\qq{for any} \varphi\in X_{\mu_0,\xi_0},
\\
\label{B:g:bd}
\frac{1}{2^{N+2}}\norm{g}_{Y_{\mu_0,\xi_0}}
<\norm{g}_{Y_{\mu ,\xi }}<{2^{N+2}}\norm{g}_{Y_{\mu_0,\xi_0}},
\qq{for any} g\in Y_{\mu_0,\xi_0}.	
\end{gather}
For later use, for any $\varphi\in X_{\mu,\xi}$, 
let us introduce the notation
\begin{gather}
\label{B:OP_proj}
\Pi_{\mu,\xi}^{\perp}\qty(\varphi)
=
\sum_{j=0}^N\dfrac{\displaystyle\int_{\R^N}\varphi H_{j;\xi}^{\mu}  }{\displaystyle\int_{\R^N}H_j Z_{j}} Z_{j;\mu,\xi},
\qq{and}
\Pi_{\mu,\xi}\qty(\varphi)=\varphi-\Pi_{\mu,\xi}^{\perp}\qty(\varphi).	
\end{gather}

Let $g\in Y_{\mu_0,\xi_0}$ be fixed.
Noticing \eqref{B:g:bd}, we have $g\in Y_{\mu,\xi}$. 
Therefore, by \Cref{prop:linsltn:prmt}, there exist 
$\phi\qty[\mu_0,\xi_0]=\T\qty[{\mu_0,\xi_0}]g\in X_{\mu_0,\xi_0}$ 
and
$\phi\qty[\mu,\xi]=\T\qty[{\mu,\xi}]g\in X_{\mu,\xi}$ 
solving the problems
\begin{align}
\label{b:prob0}
\begin{dcases}
	{\mathscr{L}_{\mu_0,{\xi_0}}\phi\qty[\mu_0,\xi_0]} = g-
	\sum_{j=0}^{N}\dfrac{\displaystyle\int_{\R^N}g Z_{j;\mu_0,\xi_0}}{\displaystyle\int_{\R^N}H_jZ_j }
	H_{j;\mu_0}^{\xi_0},
	\\
	\int_{\R^N}{\phi\qty[\mu_0,\xi_0]}{H_{j;\xi_0}^{\mu_0}}=0,\qq{where}j=0,1,\ldots,N,	
\end{dcases}								
\end{align}
and
\begin{align}
\label{b:prob}
\begin{dcases}
	{\mathscr{L}_{\mu,{\xi}}\phi\qty[\mu,\xi]} = g-
	\sum_{j=0}^{N}\dfrac{\displaystyle\int_{\R^N}g Z_{j;\mu,\xi}}{\displaystyle\int_{\R^N}H_jZ_j}H_{j;\xi}^{\mu},
	\\
	\int_{\R^N}{\phi\qty[\mu,\xi]}H_{j;\xi}^{\mu}=0,\qq{where}j=0,1,\ldots,N,
\end{dcases}							
\end{align}
respectively. Moreover, the following estimates hold
\begin{gather}
\label{B:g:phi:bd0}
	\norm{\phi\qty[\mu_0,\xi_0]}_{X_{\mu_0,\xi_0}}
	\leq C\norm{g}_{Y_{\mu_0,\xi_0}},
\\
\label{B:02}
\norm{\phi\qty[\mu,\xi]}_{X_{\mu,\xi}}
\leq C\norm{g}_{Y_{\mu,\xi}}.
\end{gather}
By inserting \eqref{B:phi:bd} and \eqref{B:g:bd} into \eqref{B:02}, we get 
\begin{align}
\label{B:g:phi:bd}
	\norm{\phi\qty[\mu,\xi]}_{X_{\mu_0,\xi_0}}
	\leq C\norm{g}_{Y_{\mu_0,\xi_0}}.
\end{align}
\subsection*{Continuity of $\T\qty[{\mu,\xi}]$} We first show that, for any $g\in Y_{\mu,\xi}$, the function $\phi\qty[\mu,\xi]=\T\qty[{\mu,\xi}]g$ is continuous with respect to $\mu$ and $\xi$. More precisely, we prove
\begin{lemm}
\label{B:lemm:c}
For any $g\in Y_{\mu_0,\xi_0}$, 
let $\phi\qty[\mu_0,\xi_0]=\T\qty[{\mu_0,\xi_0}]g$ and $\phi\qty[\mu,\xi]=\T\qty[{\mu,\xi}]g$
be given by \Cref{prop:linsltn:prmt}, i.e. the functions
$\phi\qty[\mu_0,\xi_0]$ and $\phi\qty[\mu,\xi]$ are solutions of the problems \eqref{b:prob0} and \eqref{b:prob} respectively.
Then 
\begin{align}
\label{b:cvg}
	\lim_{
		\substack{\mu\to \mu_0\\ \xi\to\xi_0}}
	\norm{\phi\qty[\mu,\xi]-\phi\qty[\mu_0,\xi_0] }_{X_{\mu_0,\xi_0}}=0.		
\end{align}	
\end{lemm}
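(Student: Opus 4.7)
The plan is to decompose $\psi:=\phi\qty[\mu,\xi]-\phi\qty[\mu_0,\xi_0]$ as $\psi=\tilde\psi+\Pi^{\perp}_{\mu_0,\xi_0}\qty(\psi)$, with the projection given by \eqref{B:OP_proj}, and to bound the two pieces separately. The finite-dimensional piece $\Pi^{\perp}_{\mu_0,\xi_0}\qty(\psi)$ lies in the span of $\qty{Z_{j;\mu_0,\xi_0}}_{j=0}^N$ and will be controlled using the orthogonality constraints appearing in \eqref{b:prob0} and \eqref{b:prob}. The orthogonal piece $\tilde\psi:=\Pi_{\mu_0,\xi_0}\qty(\psi)$ will be controlled by applying Proposition~\ref{prop:linsltn:prmt} at the base point $\qty(\mu_0,\xi_0)$ to an equation whose inhomogeneity encodes the difference of operators $\mathscr{L}_{\mu,\xi}-\mathscr{L}_{\mu_0,\xi_0}$.

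For the finite-dimensional part, since $\phi\qty[\mu_0,\xi_0]$ is orthogonal to $H^{\mu_0}_{j;\xi_0}$ and $\phi\qty[\mu,\xi]$ is orthogonal to $H^{\mu}_{j;\xi}$, we have
\begin{equation*}
\int_{\R^N}\psi\,H^{\mu_0}_{j;\xi_0}
=\int_{\R^N}\phi\qty[\mu,\xi]\qty(H^{\mu_0}_{j;\xi_0}-H^{\mu}_{j;\xi}),
\quad j=0,1,\ldots,N.
\end{equation*}
The uniform bound \eqref{B:g:phi:bd}, together with the continuity $H^{\mu}_{j;\xi}\to H^{\mu_0}_{j;\xi_0}$ in $Y_{\mu_0,\xi_0}$ furnished by the expansion \eqref{Hm:expand}, implies that these integrals vanish as $\qty(\mu,\xi)\to\qty(\mu_0,\xi_0)$ by dominated convergence. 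Hence $\norm{\Pi^{\perp}_{\mu_0,\xi_0}\qty(\psi)}_{X_{\mu_0,\xi_0}}\to 0$.

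For the orthogonal part, the identity $\mathscr{L}_{\mu_0,\xi_0}Z_{j;\mu_0,\xi_0}=0$ gives $\mathscr{L}_{\mu_0,\xi_0}\tilde\psi=\mathscr{L}_{\mu_0,\xi_0}\psi$. Rewriting $\mathscr{L}_{\mu_0,\xi_0}\phi\qty[\mu,\xi]=\qty(\mathscr{L}_{\mu_0,\xi_0}-\mathscr{L}_{\mu,\xi})\phi\qty[\mu,\xi]+\mathscr{L}_{\mu,\xi}\phi\qty[\mu,\xi]$ and using \eqref{b:prob0}, \eqref{b:prob}, this right-hand side equals
\begin{equation*}
G_{\mu,\xi}:=\qty(\mathscr{L}_{\mu_0,\xi_0}-\mathscr{L}_{\mu,\xi})\phi\qty[\mu,\xi]
+\sum_{j=0}^{N}\qty(\frac{\int_{\R^N}g\,Z_{j;\mu_0,\xi_0}}{\int_{\R^N}H_{j}Z_{j}}H^{\mu_0}_{j;\xi_0}
-\frac{\int_{\R^N}g\,Z_{j;\mu,\xi}}{\int_{\R^N}H_{j}Z_{j}}H^{\mu}_{j;\xi}).
\end{equation*}
Integrating the equation $\mathscr{L}_{\mu_0,\xi_0}\tilde\psi=G_{\mu,\xi}$ against $Z_{j;\mu_0,\xi_0}$ and invoking the identity $-\Laplace Z_j=N\qty(N+2)H_j$ together with $\int_{\R^N}\tilde\psi\,H^{\mu_0}_{j;\xi_0}=0$ yields $\int_{\R^N}G_{\mu,\xi}\,Z_{j;\mu_0,\xi_0}=0$. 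Thus the problem for $\tilde\psi$ fits the framework of Proposition~\ref{prop:linsltn:prmt} at $\qty(\mu_0,\xi_0)$, giving $\norm{\tilde\psi}_{X_{\mu_0,\xi_0}}\leq C_0\norm{G_{\mu,\xi}}_{Y_{\mu_0,\xi_0}}$.

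The main technical obstacle is then to show $\norm{G_{\mu,\xi}}_{Y_{\mu_0,\xi_0}}\to 0$. The projection sum tends to zero via $H^{\mu}_{j;\xi}\to H^{\mu_0}_{j;\xi_0}$ in $Y_{\mu_0,\xi_0}$ and $\int_{\R^N}gZ_{j;\mu,\xi}\to\int_{\R^N}gZ_{j;\mu_0,\xi_0}$, both consequences of \eqref{U:expand}, \eqref{Zj:expand}, \eqref{Hm:expand} combined with the decay estimates of Lemma~\ref{lem:decay} under dominated convergence. The delicate term is $\qty(\mathscr{L}_{\mu_0,\xi_0}-\mathscr{L}_{\mu,\xi})\phi\qty[\mu,\xi]$, in which only the coefficients $U^{p-1}_{\mu,\xi}$, $U^{p-2}_{\mu,\xi}$ and the Riesz potentials $\Riesz{U^p_{\mu,\xi}}$, $\Riesz{U^{p-1}_{\mu,\xi}\phi\qty[\mu,\xi]}$ depend on $\qty(\mu,\xi)$; these are controlled pointwise by \eqref{U:expand} and \eqref{A:id}, while the uniform bound $\norm{\phi\qty[\mu,\xi]}_{X_{\mu_0,\xi_0}}\leq C\norm{g}_{Y_{\mu_0,\xi_0}}$ from \eqref{B:g:phi:bd} controls the factor they multiply. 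Coupling this with \eqref{decay all} and dominated convergence yields $\norm{\qty(\mathscr{L}_{\mu_0,\xi_0}-\mathscr{L}_{\mu,\xi})\phi\qty[\mu,\xi]}_{Y_{\mu_0,\xi_0}}\to 0$, and together with the bound on the finite-dimensional part this establishes \eqref{b:cvg}.
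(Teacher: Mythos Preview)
Your proposal is correct and follows essentially the same approach as the paper: the same decomposition $\psi=\Pi_{\mu_0,\xi_0}(\psi)+\Pi_{\mu_0,\xi_0}^{\perp}(\psi)$, the same use of the orthogonality constraints to control the finite-dimensional piece via \eqref{Hm:expand}, and the same derivation of an equation $\mathscr{L}_{\mu_0,\xi_0}\tilde\psi=G_{\mu,\xi}$ (the paper writes $G_{\mu,\xi}=\delta_{\mu,\xi}l-\delta_{\mu,\xi}r+\delta_{\mu,\xi}t$) followed by Proposition~\ref{prop:linsltn:prmt}. The only cosmetic difference is that you explicitly verify $\int_{\R^N}G_{\mu,\xi}Z_{j;\mu_0,\xi_0}=0$ (note this really uses the full symmetry of $\mathscr{L}_{\mu_0,\xi_0}$ together with $\mathscr{L}_{\mu_0,\xi_0}Z_{j;\mu_0,\xi_0}=0$, not just the Laplacian identity) whereas the paper relies on the built-in projection in Proposition~\ref{prop:linsltn:prmt}, and you phrase the convergence of the error terms via dominated convergence where the paper extracts explicit $\O\bigl(\tfrac{|\mu-\mu_0|}{\mu_0}+\tfrac{|\xi-\xi_0|}{\mu_0}\bigr)$ rates from \eqref{U:expand}--\eqref{Hm:expand}.
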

\begin{proof}
First, by \eqref{B:OP_proj}, the function $\phi\qty[\mu,\xi]-\phi\qty[\mu_0,\xi_0]$ can be decomposed as follows
\begin{align}
\label{B:14}
	\phi\qty[\mu,\xi]-\phi\qty[\mu_0,\xi_0]=
\Pi_{\mu_0,\xi_0}\qty(\phi\qty[\mu,\xi]-\phi\qty[\mu_0,\xi_0])
+
\Pi_{\mu_0,\xi_0}^{\perp}\qty(\phi\qty[\mu,\xi]-\phi\qty[\mu_0,\xi_0]).
\end{align}
Hence, in order to show that \eqref{b:cvg} holds, we only need to show that
\begin{gather}
\label{b:cvg:PIperp}
\lim_{
	\substack{\mu\to \mu_0\\ \xi\to\xi_0}}
\norm{\Pi_{\mu_0,\xi_0}^{\perp}\qty(\phi\qty[\mu,\xi]-\phi\qty[\mu_0,\xi_0]) }_{X_{\mu_0,\xi_0}}=0,\\
\label{b:cvg:PI}
\lim_{
	\substack{\mu\to \mu_0\\ \xi\to\xi_0}}
\norm{\Pi_{\mu_0,\xi_0}\qty(\phi\qty[\mu,\xi]-\phi\qty[\mu_0,\xi_0]) }_{X_{\mu_0,\xi_0}}=0.
\end{gather}
\subsection*{Proof of \eqref{b:cvg:PIperp}}
Since 
\begin{align*}
\Pi_{\mu_0,\xi_0}^{\perp}\qty(\phi\qty[\mu,\xi]-\phi\qty[\mu_0,\xi_0])
=
\sum_{j=0}^N\dfrac{\displaystyle\int_{\R^N}
\qty(\phi\qty[\mu,\xi]-\phi\qty[\mu_0,\xi_0])	
 H_{j;\xi_0}^{\mu_0}  }{\displaystyle\int_{\R^N}H_j Z_{j}} Z_{j;\mu_0,\xi_0},
\end{align*}
from $\int_{\R^N}{\phi\qty[\mu_0,\xi_0]}{H_{j;\xi_0}^{\mu_0}}=0$
and
$\int_{\R^N}{\phi\qty[\mu,\xi]}H_{j;\xi}^{\mu}=0$~ ($j=0,1,\ldots,N$),
we deduce that
\begin{align}
\label{B:03}
\Pi_{\mu_0,\xi_0}^{\perp}\qty(\phi\qty[\mu,\xi]-\phi\qty[\mu_0,\xi_0])
=
-\sum_{j=0}^N\dfrac{\displaystyle\int_{\R^N}
\phi\qty[\mu,\xi]\qty(H_{j;\xi}^{\mu}-  H_{j;\xi_0}^{\mu_0})}{
\displaystyle\int_{\R^N}H_j Z_{j}} Z_{j;\mu_0,\xi_0}.	
\end{align}
Using \eqref{Hm:expand}, we obtain that, as $\qty(\mu,\xi)\to\qty(\mu_0,\xi_0)$,
\begin{align*}
\norm{H_{j;\xi}^{\mu}-  H_{j;\xi_0}^{\mu_0}}_{Y_{\mu_0,\xi_0}}
=\O\qty(
\frac{\abs{\mu-\mu_0}}{\mu_0}+\frac{\abs{\xi-\xi_0}}{\mu_0}
),
\end{align*}
which, together with \eqref{B:g:phi:bd}, implies that
\begin{align*}
\abs{
\int_{\R^N}
\phi\qty[\mu,\xi]\qty(H_{j;\xi}^{\mu}-  H_{j;\xi_0}^{\mu_0})
}
=\norm{g}_{Y_{\mu_0,\xi_0}}	
\O\qty(
\frac{\abs{\mu-\mu_0}}{\mu_0}+\frac{\abs{\xi-\xi_0}}{\mu_0}
).
\end{align*}
as $\qty(\mu,\xi)\to\qty(\mu_0,\xi_0)$.
Therefore, by \eqref{B:03}, we have, as $\qty(\mu,\xi)\to\qty(\mu_0,\xi_0)$,
\begin{align}
\label{B:13}
\norm{\Pi_{\mu_0,\xi_0}^{\perp}\qty(\phi\qty[\mu,\xi]-\phi\qty[\mu_0,\xi_0])}_{X_{\mu_0,\xi_0}}
=
\norm{g}_{Y_{\mu_0,\xi_0}}	
\O\qty(
\frac{\abs{\mu-\mu_0}}{\mu_0}+\frac{\abs{\xi-\xi_0}}{\mu_0}
),
\end{align}
which implies that \eqref{b:cvg:PIperp} holds.

\subsection*{Proof of \eqref{b:cvg:PI}}
Using the fact that $\mathscr{L}_{\mu_0,{\xi_0}}Z_{j;\mu_0,\xi_0}=0$~ ($j=0,1,\ldots,N$), we deduce from \eqref{b:prob0} and \eqref{b:prob} that 
\begin{align}
\label{B:07}
\begin{dcases}
\mathscr{L}_{\mu_0,{\xi_0}}\Pi_{\mu_0,\xi_0}\qty(\phi\qty[\mu,\xi]-\phi\qty[\mu_0,\xi_0])
=
\delta_{\mu,\xi}l\qty(\phi\qty[\mu,\xi])
-
\delta_{\mu,\xi}r\qty(g)
+
\delta_{\mu,\xi}t\qty(g), 
\\
\int_{\R^N}	\Pi_{\mu_0,\xi_0}\qty(\phi\qty[\mu,\xi]-\phi\qty[\mu_0,\xi_0])H_{j;\xi_0}^{\mu_0}=0,						
\end{dcases}							
\end{align}
where
\begin{gather*}
\begin{align*}
	\delta_{\mu,\xi}l\qty(\phi\qty[\mu,\xi])
	=
	\alpha p \qty(\Riesz{U_{\mu,\xi}^{p-1}\phi\qty[\mu,\xi]}U_{\mu,\xi}^{p-1}
	- \Riesz{U_{\mu_0,\xi_0}^{p-1}\phi\qty[\mu,\xi]}U_{\mu_0,\xi_0}^{p-1})
	\\
	+
	\alpha (p-1)
	\qty( \Riesz{U_{\mu,\xi}^{p}}U_{\mu,\xi}^{p-2}\phi\qty[\mu,\xi]
	- \Riesz{U_{\mu_0,\xi_0}^{p}}U_{\mu_0,\xi_0}^{p-2}\phi\qty[\mu,\xi]),				
\end{align*}
\\
\delta_{\mu,\xi}r\qty(g)
=
\sum_{j=0}^{N}
\dfrac{\displaystyle\int_{\R^N}g Z_{\mu,\xi,j}}{\displaystyle\int_{\R^N}{H_jZ_{j}}}
\qty(H_{j;\xi}^{\mu}-H_{j;\xi_0}^{\mu_0}),
\\
\delta_{\mu,\xi}t\qty(g)
=
-\sum_{j=0}^{N}
\dfrac{{\displaystyle\int_{\R^N}g \qty(Z_{j;\mu,\xi}-Z_{j;\mu_0,\xi_0})}}{\displaystyle\int_{\R^N}{H_jZ_{j}}}H_{j;\xi_0}^{\mu_0}.	
\end{gather*}

Using \eqref{U:expand} and \eqref{A:ab-ineq}, we deduce that,
as $\qty(\mu,\xi)\to\qty(\mu_0,\xi_0)$,
\begin{align*}
	\norm{l\qty(\phi\qty[\mu,\xi])}_{Y_{\mu_0,\xi_0}}
	=
	\norm{\phi\qty[\mu,\xi]}_{X_{\mu_0,\xi_0}}
	\O\qty( \frac{\abs{\mu-\mu_0}}{\mu_0}
	+
	\frac{\abs{\xi-\xi_0}}{\mu_0} ),
\end{align*}
which, together with \eqref{B:g:phi:bd}, implies that,
\begin{align}
\label{B:04}
\norm{l\qty(\phi\qty[\mu,\xi])}_{Y_{\mu_0,\xi_0}}
=
\norm{g}_{Y_{\mu_0,\xi_0}}
\O\qty( \frac{\abs{\mu-\mu_0}}{\mu_0}
+
\frac{\abs{\xi-\xi_0}}{\mu_0} ).	
\end{align}
as $\qty(\mu,\xi)\to\qty(\mu_0,\xi_0)$.

By \eqref{Hm:expand}, we have, as $\qty(\mu,\xi)\to\qty(\mu_0,\xi_0)$,
\begin{align*}
\norm{H_{j;\xi}^{\mu}-H_{j;\xi_0}^{\mu_0}}_{Y_{\mu_0,\xi_0}}=\O\qty( \frac{\abs{\mu-\mu_0}}{\mu_0}
+
\frac{\abs{\xi-\xi_0}}{\mu_0} ),
\end{align*}
which, together with the fact that $\norm{Z_{j;\mu,\xi}}_{X_{\mu_0,\xi_0}}\leq C$, implies that,
\begin{align}
\label{B:05}
\norm{\delta_{\mu,\xi}r\qty(g)}_{Y_{\mu_0,\xi_0}}
=
\norm{g}_{Y_{\mu_0,\xi_0}}
\O\qty( \frac{\abs{\mu-\mu_0}}{\mu_0}
+
\frac{\abs{\xi-\xi_0}}{\mu_0} ).
\end{align}
as $\qty(\mu,\xi)\to\qty(\mu_0,\xi_0)$.

By \eqref{Zj:expand}, we get, as $\qty(\mu,\xi)\to\qty(\mu_0,\xi_0)$,
\begin{align*}
\norm{Z_{j;\mu,\xi}-Z_{j;\mu_0,\xi_0}}_{X_{\mu_0,\xi_0}}=\O\qty( \frac{\abs{\mu-\mu_0}}{\mu_0}
+
\frac{\abs{\xi-\xi_0}}{\mu_0} ),	
\end{align*}
which, together with the fact that $\norm{H_{j;\xi}^{\mu}}_{Y_{\mu_0,\xi_0}}\leq C$, implies that
\begin{align}
\label{B:06}
\norm{\delta_{\mu,\xi}t\qty(g)}_{Y_{\mu_0,\xi_0}}
=
\norm{g}_{Y_{\mu_0,\xi_0}}
\O\qty( \frac{\abs{\mu-\mu_0}}{\mu_0}
+
\frac{\abs{\xi-\xi_0}}{\mu_0} ),	
\end{align}
as $\qty(\mu,\xi)\to\qty(\mu_0,\xi_0)$.

Using \Cref{prop:linsltn:prmt}, by inserting \eqref{B:04}, \eqref{B:05} and \eqref{B:06} into \eqref{B:07}, we get,
as $\qty(\mu,\xi)\to\qty(\mu_0,\xi_0)$,
\begin{align}
\label{B:12}
\norm{\Pi_{\mu_0,\xi_0}\qty(\phi\qty[\mu,\xi]-\phi\qty[\mu_0,\xi_0])}_{X_{\mu_0,\xi_0}}=
\norm{g}_{Y_{\mu_0,\xi_0}}
\O\qty( \frac{\abs{\mu-\mu_0}}{\mu_0}
+
\frac{\abs{\xi-\xi_0}}{\mu_0} ),
\end{align} 
hence \eqref{b:cvg:PI} holds.

This ends the proof of \Cref{B:lemm:c}.	
\end{proof}

Next, in order to concern with the differentiability of $\phi_{\mu,\xi}$ with respect to $\mu$ and $\xi$, we introduce the following problem
\begin{align}
	\label{B:psi:eq}
	\begin{dcases}
		{\mathscr{L}_{\mu_0,{\xi_0}}\widetilde{\psi}_m}
		=
		\frac{1}{\mu_0}
		l_m\qty(\phi\qty[\mu_0,\xi_0])
		-\frac{1}{\mu_0}r_m\qty(g)
		+\frac{1}{\mu_0}t_m\qty(g),
		\\
		\int_{\R^N}\widetilde{\psi}_m H_{j;\xi_0}^{\mu_0}=0,		
	\end{dcases}		
\end{align}
where $m=0,1,\ldots,N$,
\begin{align}
\label{B:lm}
\notag
l_m\qty(\phi\qty[\mu_0,\xi_0])
=&
\alpha p\qty(p-1)
\Riesz{U_{\mu_0,\xi_0}^{p-1}\phi\qty[\mu_0,\xi_0]}U_{\mu_0,\xi_0}^{p-2}Z_{m;\mu_0,\xi_0}
\\
\notag
&
+
\alpha p\qty(p-1)
\Riesz{U_{\mu_0,\xi_0}^{p-2}Z_{m;\mu_0,\xi_0}\phi\qty[\mu_0,\xi_0]}U_{\mu_0,\xi_0}^{p-1}	
\\
\notag
&+
\alpha p\qty(p-1)
\Riesz{U_{\mu_0,\xi_0}^{p-1}Z_{m;\mu_0,\xi_0}}U_{\mu_0,\xi_0}^{p-2}\phi\qty[\mu_0,\xi_0]
\\
&
+
\alpha \qty(p-1)\qty(p-2)
\Riesz{U_{\mu_0,\xi_0}^{p}}U_{\mu_0,\xi_0}^{p-3}Z_{m;\mu_0,\xi_0}\phi\qty[\mu_0,\xi_0],			
\end{align}
\begin{gather}
\label{B:rm}
r_m\qty(g)
=
\sum_{j=0}^N\dfrac{\displaystyle\int_{\R^N} g Z_{j;\mu_0,\xi_0}}{\displaystyle\int_{\R^N}H_jZ_j}\widetilde{H}_{j,m;\mu_0,\xi_0},	
\end{gather}
and
\begin{gather}
\label{B:tm}
t_m\qty(g)
=
-\sum_{j=0}^N\dfrac{\displaystyle\int_{\R^N} g \bar{Z}_{j,m;\mu_0,\xi_0}}{\displaystyle\int_{\R^N}H_jZ_j}{H}_{j;\mu_0,\xi_0}.		
\end{gather}
\begin{lemm}
\label{B:lemm:psi:td}
Let $g\in Y_{\mu_0,\xi_0}$ and $\phi\qty[\mu_0,\xi_0]$ be the solution to \eqref{b:prob0}.
For each $m=0,1,\ldots,N$, there exists a unique $\widetilde{\psi}_m$ solves \eqref{B:psi:eq}. Moreover, we have
\begin{align*}
	\norm{\widetilde{\psi}_m}_{X_{\mu_0,\xi_0}}
	\leq \frac{C}{\mu_0}\norm{g}_{Y_{\mu_0,\xi_0}},
\quad
m=0,1,\ldots,N.
\end{align*}
\end{lemm}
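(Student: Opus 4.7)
The plan is to recognize \eqref{B:psi:eq} as the linear problem solved by \Cref{prop:linsltn:prmt} at the base point $\qty(\mu_0,\xi_0)$, with right-hand side
\begin{equation*}
F_m := \frac{1}{\mu_0}\qty(l_m\qty(\phi\qty[\mu_0,\xi_0]) - r_m\qty(g) + t_m\qty(g)).
\end{equation*}
Two ingredients are needed: (a) the bound $\norm{F_m}_{Y_{\mu_0,\xi_0}}\leq \frac{C}{\mu_0}\norm{g}_{Y_{\mu_0,\xi_0}}$, and (b) the compatibility condition $\int_{\R^N} F_m\,Z_{j;\mu_0,\xi_0}=0$ for each $j=0,1,\ldots,N$, so that the projection correction appearing in \eqref{lin:nhmgns:nothgn:prmt} is automatically zero and one recovers exactly \eqref{B:psi:eq}.

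For (a), every summand of $l_m$, $r_m$, and $t_m$ is a Riesz convolution multiplied by products involving $U_{\mu_0,\xi_0}$, $Z_{m;\mu_0,\xi_0}$, $\bar Z_{j,m;\mu_0,\xi_0}$, $H^{\mu_0}_{i;\xi_0}$, and $\widetilde H^{\mu_0}_{i,m;\xi_0}$, whose pointwise sizes are controlled exactly as in Step 1 of the proof of \Cref{lem:linsltn}. Applying \Cref{lem:decay} together with the a priori estimate \eqref{B:g:phi:bd0} routinely yields
\begin{equation*}
\norm{l_m\qty(\phi\qty[\mu_0,\xi_0])}_{Y_{\mu_0,\xi_0}} + \norm{r_m(g)}_{Y_{\mu_0,\xi_0}} + \norm{t_m(g)}_{Y_{\mu_0,\xi_0}} \leq C\norm{g}_{Y_{\mu_0,\xi_0}},
\end{equation*}
whence the desired bound on $F_m$.

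For (b), the heart of the argument, I would first observe the algebraic identity
\begin{equation*}
l_m(\phi) = -\mu_0\,\partial_m\qty(\mathscr{L}_{\mu,\xi}\phi)\big|_{(\mu_0,\xi_0)},
\end{equation*}
valid for every fixed test function $\phi$, where $\partial_0 = \partial_\mu$ and $\partial_m = \partial_{\xi_m}$ for $m\geq 1$; this drops out of a direct computation using $\partial_m U_{\mu,\xi}|_{(\mu_0,\xi_0)} = \frac{1}{\mu_0} Z_{m;\mu_0,\xi_0}$. Since $\mathscr{L}_{\mu,\xi}$ is formally self-adjoint and $\mathscr{L}_{\mu,\xi} Z_{j;\mu,\xi} = 0$, the map $(\mu,\xi)\mapsto \int_{\R^N}\mathscr{L}_{\mu,\xi}\phi\cdot Z_{j;\mu,\xi}$ vanishes identically; differentiating at $(\mu_0,\xi_0)$ and using $\partial_m Z_{j;\mu,\xi}|_{(\mu_0,\xi_0)} = \frac{1}{\mu_0}\bar Z_{j,m;\mu_0,\xi_0}$ gives
\begin{equation*}
\int_{\R^N} l_m\qty(\phi\qty[\mu_0,\xi_0])\,Z_{j;\mu_0,\xi_0} = \int_{\R^N}\mathscr{L}_{\mu_0,\xi_0}\phi\qty[\mu_0,\xi_0]\cdot \bar Z_{j,m;\mu_0,\xi_0}.
\end{equation*}
Substituting the expression for $\mathscr{L}_{\mu_0,\xi_0}\phi\qty[\mu_0,\xi_0]$ from \eqref{b:prob0} and invoking the index-generalized identity $\int_{\R^N} H^{\mu_0}_{i;\xi_0}\bar Z_{j,m;\mu_0,\xi_0} = -\int_{\R^N}\widetilde H^{\mu_0}_{i,m;\xi_0} Z_{j;\mu_0,\xi_0}$ (obtained by differentiating $\int H^\mu_{i;\xi}Z_{j;\mu,\xi} = \delta_{ij}\int H_j Z_j$ in $(\mu,\xi)$, in the same spirit as \eqref{tHbZ:id}), the projection contribution exactly cancels $\int r_m(g) Z_{j;\mu_0,\xi_0}$, leaving $\int (l_m - r_m) Z_{j;\mu_0,\xi_0} = \int g\,\bar Z_{j,m;\mu_0,\xi_0}$. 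A direct computation from \eqref{B:tm} using $\int H^{\mu_0}_{i;\xi_0}Z_{j;\mu_0,\xi_0} = \delta_{ij}\int H_j Z_j$ then evaluates $\int t_m(g) Z_{j;\mu_0,\xi_0}$ to $-\int g\,\bar Z_{j,m;\mu_0,\xi_0}$, and the three contributions sum to zero.

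With (a) and (b) established, \Cref{prop:linsltn:prmt} produces a unique $\widetilde\psi_m \in X_{\mu_0,\xi_0}$ solving \eqref{B:psi:eq}, together with the desired estimate $\norm{\widetilde\psi_m}_{X_{\mu_0,\xi_0}} \leq C_0\norm{F_m}_{Y_{\mu_0,\xi_0}} \leq \frac{C}{\mu_0}\norm{g}_{Y_{\mu_0,\xi_0}}$. The main obstacle I expect is the bookkeeping in (b): correctly tracking which scaling ($\mu^{-(N-2)/2}$ or $\mu^{-(N+2)/2}$) each of $Z$, $\bar Z$, $H$, $\widetilde H$ carries, identifying $l_m$ as $-\mu_0$ times a parameter derivative of $\mathscr{L}_{\mu,\xi}$, and verifying that the three cancellations are exact; the decay estimates in (a) simply mirror those already carried out for \Cref{lem:linsltn}.
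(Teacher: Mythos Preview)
Your proposal is correct and follows essentially the same route as the paper: verify that the right-hand side $F_m$ lies in $Y_{\mu_0,\xi_0}$ with the claimed bound, check the orthogonality $\int F_m Z_{j;\mu_0,\xi_0}=0$, and then invoke \Cref{prop:linsltn:prmt}. The only cosmetic difference is in how the key identity $\int l_m(\phi)\,Z_{j;\mu_0,\xi_0}=\int \mathscr{L}_{\mu_0,\xi_0}\phi\cdot\bar Z_{j,m;\mu_0,\xi_0}$ is obtained: the paper differentiates $\mathscr{L}_{\mu,\xi}Z_{j;\mu,\xi}=0$ pointwise (getting $\mathscr{L}_{\mu_0,\xi_0}\bar Z_{j,m}=l_m(Z_j)$) and then uses the symmetry of the bilinear form $(\phi,\psi)\mapsto\int l_m(\phi)\psi$ together with self-adjointness of $\mathscr{L}_{\mu_0,\xi_0}$, whereas you differentiate the vanishing integral $\int\mathscr{L}_{\mu,\xi}\phi\cdot Z_{j;\mu,\xi}=0$ directly; the two computations are equivalent. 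Your observation that one needs the index-generalized identity $\int H^{\mu_0}_{i;\xi_0}\bar Z_{j,m;\mu_0,\xi_0}=-\int\widetilde H^{\mu_0}_{i,m;\xi_0}Z_{j;\mu_0,\xi_0}$ (from differentiating $\int H^\mu_{i;\xi}Z_{j;\mu,\xi}=\delta_{ij}\int H_jZ_j$) is in fact more precise than the paper, which invokes \eqref{tHbZ:id} while reusing the index $j$ for both the outer variable and the summation.
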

\begin{proof}
We only show the case $m=0$.
First, since
\begin{align*}
-{\Laplace Z_{j;\mu,\xi} }
-{\alpha}p\Riesz{U_{\mu,\xi}^{p-1}Z_{j;\mu,\xi}}U_{\mu,\xi}^{p-1}
-
{\alpha}\qty(p-1)\Riesz{U_{\mu,\xi}^p}U_{\mu,\xi}^{p-2}Z_{j;\mu,\xi}
=0,
\end{align*}
by differentiating the above equation with respect to $\mu$
as $\qty(\mu_0,\xi_0)$, we get
\begin{align}
\label{B:08}
\Lscr_{\mu_0,\xi_0}\bar{Z}_{j,0;\mu_0,\xi_0}
=
l_0\qty(Z_{j;\mu_0,\xi_0}),	
\end{align}	
where $l_0$ is given by \eqref{B:lm}. Since
\begin{align*}
\int_{\R^N}l_0\qty(\phi\qty[\mu_0,\xi_0]) Z_{j;\mu_0,\xi_0}
=
\int_{\R^N}l_0\qty(Z_{j;\mu_0,\xi_0}) \phi\qty[\mu_0,\xi_0],
\end{align*}
by \eqref{B:08}, we get
\begin{align}
\label{B:09}
\int_{\R^N}l_0\qty(\phi\qty[\mu_0,\xi_0]) Z_{j;\mu_0,\xi_0}
=
\int_{\R^N}\Lscr_{\mu_0,\xi_0}\phi\qty[\mu_0,\xi_0]\bar{Z}_{j,0;\mu_0,\xi_0},	
\end{align} 
where we used the fact that
\begin{align*}
\int_{\R^N}\Lscr_{\mu_0,\xi_0}\phi\qty[\mu_0,\xi_0]\bar{Z}_{j,0;\mu_0,\xi_0}
=
\int_{\R^N}\Lscr_{\mu_0,\xi_0}\bar{Z}_{j,0;\mu_0,\xi_0}\phi\qty[\mu_0,\xi_0].
\end{align*}

Next, by inserting \eqref{b:prob0} into \eqref{B:09}, we get
\begin{align*}
\int_{\R^N}l_0\qty(\phi\qty[\mu_0,\xi_0]) Z_{j;\mu_0,\xi_0}
=
\int_{\R^N}
g
\bar{Z}_{j,0;\mu_0,\xi_0}
-
\sum_{j=0}^{N}
\dfrac{\displaystyle\int_{\R^N}g Z_{j;\mu_0,\xi_0}}{\displaystyle\int_{\R^N}H_jZ_j }
\int_{\R^N}H_{j;\mu_0}^{\xi_0}\bar{Z}_{j,0;\mu_0,\xi_0},
\end{align*}
which, together with \eqref{tHbZ:id}, implies that
\begin{align*}
\int_{\R^N}l_0\qty(\phi\qty[\mu_0,\xi_0]) Z_{j;\mu_0,\xi_0}
=
\int_{\R^N}
g
\bar{Z}_{j,0;\mu_0,\xi_0}
+
\sum_{j=0}^{N}
\dfrac{\displaystyle\int_{\R^N}g Z_{j;\mu_0,\xi_0}}{\displaystyle\int_{\R^N}H_jZ_j }
\int_{\R^N}\widetilde{H}_{j,0;\xi_0}^{\mu_0}{Z}_{j;\mu_0,\xi_0}.
\end{align*}
Hence,
\begin{align*}
\int_{\R^N}
\qty(
l_0\qty(\phi\qty[\mu_0,\xi_0])
-r_0\qty(g)
-t_0\qty(g)
)Z_{j;\mu_0,\xi_0}=0,
\quad j=0,1,\ldots,N,
\end{align*}
by \Cref{prop:linsltn:prmt}, 
there exists a unique $\widetilde{\psi}_m$ solves \eqref{B:psi:eq}. Moreover, 
\begin{align*}
\norm{\widetilde{\psi}_0}_{X_{\mu_0,\xi_0}}\leq 
\frac{C}{\mu_0}
\norm{h_0\qty(\phi\qty[\mu_0,\xi_0])
	-r_0\qty(g)
	-t_0\qty(g)}_{Y_{\mu_0,\xi_0}}.
\end{align*}
By a similar argument as that appeared in the proof of \Cref{B:lemm:c}, we get
\begin{align*}
\norm{\widetilde{\psi}_0}_{X_{\mu_0,\xi_0}}\leq 
\frac{C}{\mu_0}
\norm{g}_{Y_{\mu_0,\xi_0}}.	
\end{align*}

The cases $m=1,2,\ldots,N$ can be obtained in the same manner. This ends the proof of \Cref{B:lemm:psi:td}
\end{proof}

Next, let us define
\begin{align}
\label{B:psim}
\psi_m 
=\widetilde{\psi}_m
-
\sum_{j=0}^N
\frac{1}{\mu_0}
\dfrac{\displaystyle\int_{\R^N}\phi\qty[\mu_0,\xi_0] \widetilde{H}_{j,m;\xi_0}^{\mu_0} }{\displaystyle\int_{\R^N} H_j Z_j}
Z_{j;\mu_0,\xi_0}.			
\end{align}

\begin{lemm}
Let $g\in Y_{\mu_0,\xi_0}$, $\phi\qty[\mu_0,\xi_0]$ and $\phi\qty[\mu,\xi]$ be the solutions to \eqref{b:prob0}
\eqref{b:prob0} respectively. Then $\phi\qty[\mu,\xi]$ is $C^1$ with respect to $\mu$ and $\xi$. More precisely, we have
\begin{gather}
\label{B:11}
\pdv{\phi}{\mu}\qty(\mu_0,\xi_0)=\psi_0, 
\\
\label{B:27}
\pdv{\phi}{\xi_m}\qty(\mu_0,\xi_0)=\psi_m, 
~~m=1,2,\ldots,N,
\end{gather}
and
\begin{gather}
\label{B:26}
\norm{\pdv{\phi}{\mu}\qty(\mu_0,\xi_0)}_{X_{\mu_0,\xi_0}}
\leq 
\frac{C}{\mu_0}
\norm{g}_{Y_{\mu_0,\xi_0}},	
\\
\label{B:28}
\norm{\pdv{\phi}{\xi_m}\qty(\mu_0,\xi_0)}_{X_{\mu_0,\xi_0}}
\leq 
\frac{C}{\mu_0}
\norm{g}_{Y_{\mu_0,\xi_0}},	\quad m=1,\ldots,N.		
\end{gather}
\end{lemm}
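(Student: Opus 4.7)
We treat \eqref{B:11} in detail; the identity \eqref{B:27} is analogous up to permutation of indices. Fix $(\mu_0,\xi_0)$ and let $h\in\mathbb{R}$ be small. By the continuity statement \Cref{B:lemm:c} applied to $(\mu,\xi)=(\mu_0+h,\xi_0)$, $\phi[\mu_0+h,\xi_0]$ is close to $\phi[\mu_0,\xi_0]$ in $X_{\mu_0,\xi_0}$, so we may form the difference quotient $\eta_h=\bigl(\phi[\mu_0+h,\xi_0]-\phi[\mu_0,\xi_0]\bigr)/h$ and aim to show $\|\eta_h-\psi_0\|_{X_{\mu_0,\xi_0}}\to 0$ as $h\to 0$. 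The bound \eqref{B:26} then follows immediately from \eqref{B:psim} combined with \Cref{B:lemm:psi:td} and the a priori estimate \eqref{B:g:phi:bd0} for $\phi[\mu_0,\xi_0]$.

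The first step is to derive the equation satisfied by $\eta_h$. Subtracting \eqref{b:prob0} from \eqref{b:prob} at $(\mu_0+h,\xi_0)$ and dividing by $h$ gives
\begin{align*}
\mathscr{L}_{\mu_0,\xi_0}\eta_h
=-\frac{\mathscr{L}_{\mu_0+h,\xi_0}-\mathscr{L}_{\mu_0,\xi_0}}{h}\phi[\mu_0+h,\xi_0]
-\frac{1}{h}\sum_{j=0}^{N}\frac{1}{\int H_jZ_j}\bigl(\ldots\bigr),
\end{align*}
where the bracketed expression collects the differences of the projection terms in \eqref{b:prob}-\eqref{b:prob0}. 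Feeding in the Taylor expansions \eqref{U:expand}, \eqref{Zj:expand}, \eqref{Hm:expand} of $U_{\mu,\xi}$, $Z_{j;\mu,\xi}$ and $H^\mu_{j;\xi}$ at $(\mu_0,\xi_0)$, and using the continuity $\phi[\mu_0+h,\xi_0]\to\phi[\mu_0,\xi_0]$ in $X_{\mu_0,\xi_0}$, each of these three pieces converges in $Y_{\mu_0,\xi_0}$, respectively, to $\frac{1}{\mu_0}l_0(\phi[\mu_0,\xi_0])$, $-\frac{1}{\mu_0}r_0(g)$, $\frac{1}{\mu_0}t_0(g)$, the three terms appearing on the right-hand side of \eqref{B:psi:eq}. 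Using the convergence of differences of Riesz-type operators as in \Cref{B:lemm:c}, the remainders are $o(1)$ in $Y_{\mu_0,\xi_0}$ as $h\to 0$. Hence $\mathscr{L}_{\mu_0,\xi_0}(\eta_h-\widetilde{\psi}_0)=g_h$ with $\|g_h\|_{Y_{\mu_0,\xi_0}}\to 0$.

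The second step addresses the orthogonality defect, which is precisely the reason $\psi_0$ differs from $\widetilde{\psi}_0$. Expanding $H_{j;\xi_0}^{\mu_0+h}=H_{j;\xi_0}^{\mu_0}+\frac{h}{\mu_0}\widetilde{H}_{j,0;\xi_0}^{\mu_0}+O(h^2)$ (in $Y_{\mu_0,\xi_0}$) via \eqref{Hm:expand}, combined with $\int\phi[\mu_0+h,\xi_0]H_{j;\xi_0}^{\mu_0+h}=0$ and $\int\phi[\mu_0,\xi_0]H_{j;\xi_0}^{\mu_0}=0$, yields
\begin{align*}
\int_{\R^N}\eta_h\,H_{j;\xi_0}^{\mu_0}
=-\frac{1}{\mu_0}\int_{\R^N}\phi[\mu_0+h,\xi_0]\,\widetilde{H}_{j,0;\xi_0}^{\mu_0}+O(h)
\xrightarrow[h\to 0]{}
-\frac{1}{\mu_0}\int_{\R^N}\phi[\mu_0,\xi_0]\,\widetilde{H}_{j,0;\xi_0}^{\mu_0}.
\end{align*}
Using $\int Z_{j';\mu_0,\xi_0}H_{j;\xi_0}^{\mu_0}=\delta_{jj'}\int H_jZ_j$ (from \eqref{ZjHm:vns}), the correction term in \eqref{B:psim} is designed so that $\int\psi_0\,H_{j;\xi_0}^{\mu_0}$ equals exactly this limit value. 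Thus $\int(\eta_h-\psi_0)H_{j;\xi_0}^{\mu_0}=o(1)$.

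For the third step, decompose $\eta_h-\psi_0$ into its $\Pi_{\mu_0,\xi_0}$-component and its $\Pi^\perp_{\mu_0,\xi_0}$-component as in \eqref{B:14}. The projection-complement part is controlled directly by the $o(1)$ defect from Step~2, exactly as in \eqref{B:13}. The $\Pi_{\mu_0,\xi_0}$-part is a function orthogonal to all $H_{j;\xi_0}^{\mu_0}$ satisfying $\mathscr{L}_{\mu_0,\xi_0}\Pi_{\mu_0,\xi_0}(\eta_h-\psi_0)=g_h+o(1)$ in $Y_{\mu_0,\xi_0}$, so \Cref{prop:linsltn:prmt} gives $\|\Pi_{\mu_0,\xi_0}(\eta_h-\psi_0)\|_{X_{\mu_0,\xi_0}}\to 0$. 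Combining both pieces gives \eqref{B:11}; the argument for \eqref{B:27} is identical after replacing differentiation in $\mu$ by differentiation in $\xi_m$, using the corresponding Taylor expansions and the analogous definitions of $l_m$, $r_m$, $t_m$. Finally, continuity of $(\mu,\xi)\mapsto\psi_m$ as maps into $X_{\mu_0,\xi_0}$ follows by reapplying the same perturbative estimates, so $\phi[\mu,\xi]$ is $C^1$. The principal technical obstacle is the careful bookkeeping in Step~1: isolating the genuinely linear-in-$h$ contributions from all the nonlinear-in-$(\mu,\xi)$ pieces (in particular the Riesz-potential terms inside $\mathscr{L}_{\mu,\xi}$) while keeping all remainders in $Y_{\mu_0,\xi_0}$, which requires systematic use of \Cref{lem:U:expand}, \Cref{lem:decay}, and the identities \eqref{A:ab-ineq} and \eqref{tHbZ:id}.
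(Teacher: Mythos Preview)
Your proposal is correct and follows essentially the same route as the paper: decompose the difference quotient via $\Pi_{\mu_0,\xi_0}$ and $\Pi_{\mu_0,\xi_0}^{\perp}$, use the orthogonality relations together with the expansion \eqref{Hm:expand} to show the $\Pi^{\perp}$-part converges to the explicit $Z_j$-combination in \eqref{B:psim}, and use the equation \eqref{B:07} together with the Taylor expansions of \Cref{lem:U:expand} to show the $\Pi$-part converges to $\widetilde{\psi}_0$ via \Cref{prop:linsltn:prmt}. The only organizational difference is that the paper compares $\Pi(\eta_h)$ with $\widetilde{\psi}_0$ and $\Pi^{\perp}(\eta_h)$ with the correction term separately, whereas you subtract $\psi_0$ first and then project; since $\mathscr{L}_{\mu_0,\xi_0}$ annihilates the $Z_{j;\mu_0,\xi_0}$ appearing in $\Pi^{\perp}$, the two computations are equivalent.
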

\begin{proof}
We only show that \eqref{B:11} and \eqref{B:26} holds.
The identity \eqref{B:27} and the estimates \eqref{B:28} can be proved in the same manner.

In order to show that \eqref{B:11} holds, we only need to prove that
\begin{align}
\label{B:25}
\lim\limits_{\mu\to\mu_0}
\norm{\frac{\phi\qty(\mu,\xi_0)-\phi\qty[\mu_0,\xi_0]}{\mu-\mu_0}-\psi_0  }=0.	
\end{align}

We claim that
\begin{gather}
\label{B:24}
\lim\limits_{\mu\to\mu_0}
\norm{\Pi^{\perp}\qty(\frac{\phi\qty(\mu,\xi_0)-\phi\qty[\mu_0,\xi_0]}{\mu-\mu_0})
	+\sum_{j=0}^N
	\frac{1}{\mu_0}
	\dfrac{\displaystyle\int_{\R^N}\phi\qty[\mu_0,\xi_0] \widetilde{H}_{j,m;\xi_0}^{\mu_0} }{\displaystyle\int_{\R^N} H_j Z_j}
	Z_{j;\mu_0,\xi_0}
}=0,
\\
\label{B:23}
\lim\limits_{\mu\to\mu_0}
\norm{\Pi\qty(\frac{\phi\qty(\mu,\xi_0)-\phi\qty[\mu_0,\xi_0]}{\mu-\mu_0})-\widetilde{\psi}_0  }=0.
\end{gather}
Assuming that \eqref{B:23} and \eqref{B:24} hold, 
using \eqref{B:psim} and the fact that 
\begin{align*}
\phi\qty[\mu,\xi]-\phi\qty[\mu_0,\xi_0]=
\Pi_{\mu_0,\xi_0}\qty(\phi\qty[\mu,\xi]-\phi\qty[\mu_0,\xi_0])
+
\Pi_{\mu_0,\xi_0}^{\perp}\qty(\phi\qty[\mu,\xi]-\phi\qty[\mu_0,\xi_0]),	
\end{align*}
we conclude that \eqref{B:25} holds. Moreover, by \Cref{B:lemm:psi:td},
we deduce that \eqref{B:26} holds. 

The rest of the proof is devoted to show that \eqref{B:24} and \eqref{B:23} hold.

\subsection*{Proof of \eqref{B:24}}
Since 
\begin{align*}
\Pi^{\perp}\qty(\frac{\phi\qty(\mu,\xi_0)-\phi\qty[\mu_0,\xi_0]}{\mu-\mu_0})
=
\sum_{j=0}^N\dfrac{\displaystyle\int_{\R^N}
	\qty(\phi\qty[\mu,\xi]-\phi\qty[\mu_0,\xi_0])	
	H_{j;\xi_0}^{\mu_0}  }{
\qty(\mu-\mu_0)
	\displaystyle\int_{\R^N}H_j Z_{j}} Z_{j;\mu_0,\xi_0},
\end{align*}
using $\int_{\R^N}{\phi\qty[\mu_0,\xi_0]}{H_{j;\xi_0}^{\mu_0}}=0$
and
$\int_{\R^N}{\phi\qty[\mu,\xi]}H_{j;\xi}^{\mu}=0$~ ($j=0,1,\ldots,N$),
we have
\begin{align*}
\Pi_{\mu_0,\xi_0}^{\perp}\qty(\phi\qty[\mu,\xi]-\phi\qty[\mu_0,\xi_0])
=
-\sum_{j=0}^N\dfrac{\displaystyle\int_{\R^N}
	\phi\qty[\mu,\xi]\qty(H_{j;\xi}^{\mu}-  H_{j;\xi_0}^{\mu_0})}{
	\displaystyle\int_{\R^N}H_j Z_{j}} Z_{j;\mu_0,\xi_0}.		
\end{align*}
Therefore, using the facts
$\norm{Z_{j;\mu_0,\xi_0}}_{X_{\mu_0,\xi_0}}\leq C$~($j=0,1,\ldots,N$),
 in order to show that \eqref{B:24}, we only need to prove
\begin{align}
\label{B:29}
\lim\limits_{\mu\to\mu_0}\qty(\int_{\R^N}
\phi\qty[\mu,\xi]\frac{H_{j;\xi}^{\mu}-  H_{j;\xi_0}^{\mu_0}}{\mu-\mu_0}
-\frac{1}{\mu_0}
\int_{\R^N}\phi\qty[\mu_0,\xi_0] \widetilde{H}_{j,m;\xi_0}^{\mu_0})=0.
\end{align}
Indeed, by \eqref{Hm:expand}, we have
\begin{align*}
\frac{H_{j;\xi}^{\mu}\qty(x)-  H_{j;\xi_0}^{\mu_0}\qty(x)}{\mu-\mu_0}
-\frac{1}{\mu_0}\widetilde{H}_{j,m;\xi_0}^{\mu_0}\qty(x)
=
U_{\mu_0,\xi_0}^{\frac{N+2}{N-2}}\qty(x)
\O\qty( \frac{\abs{\mu-\mu_0}}{\mu_0^2}),
\end{align*}
which, together with \eqref{b:cvg}, implies that \eqref{B:29} holds. Therefore, we proved \eqref{B:24}.
\subsection*{Proof of \eqref{B:23}}
First, by \eqref{B:07}, we have
\begin{align*}
\begin{dcases}
\Lscr\Pi_{\mu_0,\xi_0}\frac{\phi\qty(\mu,\xi_0)-\phi\qty[\mu_0,\xi_0]}{\mu-\mu_0}
=
\frac{\delta_{\mu,\xi_0}l\qty(\phi\qty(\mu,\xi_0))}{\mu-\mu_0}
-
\frac{\delta_{\mu,\xi_0}r\qty(g)}{\mu-\mu_0}
+
\frac{\delta_{\mu,\xi_0}t\qty(g)}{\mu-\mu_0}, 
\\
\int_{\R^N}	\Pi_{\mu_0,\xi_0}\qty(\frac{\phi\qty(\mu,\xi_0)-\phi\qty[\mu_0,\xi_0]}{\mu-\mu_0})H_{j;\xi_0}^{\mu_0}=0,	
\end{dcases}						
\end{align*}
where
\begin{gather}
\label{B:d0l}
\begin{align}
	\notag
	\delta_{\mu,\xi_0}l\qty(\phi\qty(\mu,\xi_0))
	=
	\alpha p \qty(\Riesz{U_{\mu,\xi}^{p-1}\phi\qty[\mu,\xi]}U_{\mu,\xi}^{p-1}
	- \Riesz{U_{\mu_0,\xi_0}^{p-1}\phi\qty[\mu,\xi]}U_{\mu_0,\xi_0}^{p-1})
	\\
	+
	\alpha (p-1)
	\qty( \Riesz{U_{\mu,\xi}^{p}}U_{\mu,\xi}^{p-2}\phi\qty[\mu,\xi]
	- \Riesz{U_{\mu_0,\xi_0}^{p}}U_{\mu_0,\xi_0}^{p-2}\phi\qty[\mu,\xi]),				
\end{align}
\\
\label{B:d0r}
\delta_{\mu,\xi}r\qty(g)
=
\sum_{j=0}^{N}
\dfrac{\displaystyle\int_{\R^N}g Z_{\mu,\xi,j}}{\displaystyle\int_{\R^N}{H_jZ_{j}}}
\qty(H_{j;\xi}^{\mu}-H_{j;\xi_0}^{\mu_0}),
\\
\label{B:d0t}
\delta_{\mu,\xi}t\qty(g)
=
-\sum_{j=0}^{N}
\dfrac{{\displaystyle\int_{\R^N}g \qty(Z_{j;\mu,\xi}-Z_{j;\mu_0,\xi_0})}}{\displaystyle\int_{\R^N}{H_jZ_{j}}}H_{j;\xi_0}^{\mu_0}.		
\end{gather}

Now, we claim
\begin{gather}
\label{B:19}
\norm{
	\frac{\delta_{\mu,\xi_0}l\qty(\phi\qty(\mu,\xi_0))}{\mu-\mu_0}
	-
	\frac{1}{\mu_0}l_0\qty(\phi\qty[\mu_0,\xi_0])
}_{Y_{\mu_0,\xi_0}}
=
\norm{g}_{Y_{\mu_0,\xi_0}}
\O\qty( \frac{\abs{\mu-\mu_0}}{\mu_0^2}),
\\
\label{B:20}
\norm{
	\frac{\delta_{\mu,\xi}r\qty(g)}{\mu-\mu_0}
	-
	\frac{1}{\mu_0}r_0\qty(g)	
}_{Y_{\mu_0,\xi_0}}
=
\norm{g}_{Y_{\mu_0,\xi_0}}
\O\qty( \frac{\abs{\mu-\mu_0}}{\mu_0^2}),
\\
\label{B:21}
\norm{
	\frac{\delta_{\mu,\xi}t\qty(g)}{\mu-\mu_0}
	-
	\frac{1}{\mu_0}t_0\qty(g)	
}_{Y_{\mu_0,\xi_0}}
=
\norm{g}_{Y_{\mu_0,\xi_0}}
\O\qty( \frac{\abs{\mu-\mu_0}}{\mu_0^2}).	
\end{gather}
\subsubsection*{Proof of \eqref{B:19}}

Indeed, by \eqref{B:14}, \eqref{B:13} and \eqref{B:12}, we have
\begin{align}
\label{B:15}
\phi\qty(\mu,\xi_0)\qty(x)
=
\phi\qty[\mu_0,\xi_0]\qty(x)
+
U_{\mu_0,\xi_0}\qty(x)\norm{g}_{Y_{\mu_0,\xi_0}}
\O\qty( \frac{\abs{\mu-\mu_0}}{\mu_0}).
\end{align}
Using \eqref{U:expand}, we deduce that
\begin{align}
\notag
\label{B:16}
&U_{\mu,\xi_0}^{p-1}\qty(x)-U_{\mu_0,\xi_0}^{p-1}\qty(x)
\\
=
&\qty(p-1)U_{\mu_0,\xi_0}^{p-2}\qty(x)Z_{0;\mu_0,\xi_0}\qty(x)
\frac{\mu-\mu_0}{\mu_0}
+
U_{\mu_0,\xi_0}^{p-1}\qty(x)\O\qty( \frac{\abs{\mu-\mu_0}^2}{\mu_0^2}).
\end{align}
By \eqref{B:15} and \eqref{B:16}, using \eqref{A:ab-ineq} and \eqref{B:g:phi:bd}, we get 
\begin{align}
\label{B:17}
\notag
&\Riesz{U_{\mu,\xi}^{p-1}\phi\qty[\mu,\xi]}\qty(x)U_{\mu,\xi}^{p-1}\qty(x)
- \Riesz{U_{\mu_0,\xi_0}^{p-1}\phi\qty[\mu,\xi]}\qty(x)U_{\mu_0,\xi_0}^{p-1}\qty(x)\\
\notag
=&
\qty(p-1) \Riesz{U_{\mu_0,\xi_0}^{p-1}\phi\qty[\mu_0,\xi_0]}\qty(x)
U_{\mu_0,\xi_0}^{p-2}\qty(x)Z_{0;\mu_0,\xi_0}\qty(x)
\frac{\mu-\mu_0}{\mu_0}
\\
\notag
&+
\qty(p-1) \Riesz{U_{\mu_0,\xi_0}^{p-2}Z_{0;\mu_0,\xi_0}\phi\qty[\mu_0,\xi_0]}\qty(x)
U_{\mu_0,\xi_0}^{p-1}\qty(x)
\frac{\mu-\mu_0}{\mu_0}
\\
&+
\Riesz{U_{\mu_0,\xi_0}^p}\qty(x)U_{\mu_0,\xi_0}^{p-1}\qty(x)\norm{g}_{Y_{\mu_0,\xi_0}}
\O\qty( \frac{\abs{\mu-\mu_0}^2}{\mu_0^2}).
\end{align}

By a similar argument as above, we can obtain that
\begin{align}
\label{B:18}
\notag
&\Riesz{U_{\mu,\xi}^{p}}\qty(x)U_{\mu,\xi}^{p-2}\qty(x)\phi\qty[\mu,\xi]\qty(x)
- \Riesz{U_{\mu_0,\xi_0}^{p}}\qty(x)U_{\mu_0,\xi_0}^{p-2}\qty(x)\phi\qty[\mu,\xi]\qty(x)
\\
\notag
=&
p\Riesz{U_{\mu_0,\xi_0}^{p-1}Z_{0;\mu_0,\xi_0}}\qty(x)U_{\mu_0,\xi_0}^{p-2}\qty(x)\phi\qty[\mu_0,\xi_0]\qty(x)
\\
\notag
&+
(p-2)\Riesz{U_{\mu_0,\xi_0}^{p}}\qty(x)U_{\mu_0,\xi_0}^{p-3}\qty(x)Z_{0;\mu_0,\xi_0}\qty(x)\phi\qty[\mu_0,\xi_0]\qty(x)
\\
&+
\Riesz{U_{\mu_0,\xi_0}^p}\qty(x)U_{\mu_0,\xi_0}^{p-1}\qty(x)\norm{g}_{Y_{\mu_0,\xi_0}}
\O\qty( \frac{\abs{\mu-\mu_0}^2}{\mu_0^2}).
\end{align}
By inserting \eqref{B:17} and \eqref{B:18} into \eqref{B:d0l}, we obtain that \eqref{B:19} holds.
\subsubsection*{Proof of \eqref{B:20}}
Indeed, by \eqref{Hm:expand}, we have
\begin{align}
\label{B:22}
\norm{
H_{j;\xi_0}^{\mu}-H_{j;\xi_0}^{\mu_0}
-\widetilde{H}^{\mu_0}_{m,0;\xi_0}\qty(x)\frac{\mu-\mu_0}{\mu_0}
}_{Y_{\mu_0,\xi_0}}
=
\O\qty( \frac{\abs{\mu-\mu_0}^2}{\mu_0^2}).
\end{align}
Using \eqref{Zj:expand}, we get
\begin{align*}
\int_{\R^N}g Z_{\mu_0,\xi,j}
=
\int_{\R^N}g Z_{\mu_0,\xi_0,j}
+
\norm{g}_{Y_{\mu_0,\xi_0}}\O\qty( \frac{\abs{\mu-\mu_0}}{\mu_0}),
\end{align*}
which, together with \eqref{B:22}, implies that \eqref{B:20} holds.
\subsubsection*{Proof of \eqref{B:21}}
By \eqref{Zj:expand}, we get
\begin{align*}
\norm{Z_{j;\mu,\xi}
-
Z_{j;\mu_0,\xi_0} \qty(x)
- \bar{Z}_{j,0;\mu_0,\xi_0}\qty(x)\frac{\mu-\mu_0}{\mu_0}}
=
\O\qty( \frac{\abs{\mu-\mu_0}^2}{\mu_0^2}),
\end{align*}
which, together with the facts $\norm{H_{j;\xi_0}^{\mu_0}}_{Y_{\mu_0,\xi_0}}\leq C$, implies that \eqref{B:21} holds.	
\end{proof}
%\printbibliography


\begin{thebibliography}{10}
	
	\bibitem{AGP1999}
	A.~Ambrosetti, J.~Garcia~Azorero, and I.~Peral.
	\newblock Perturbation of {$\Delta{u}+u^{\frac{N+2}{N-2}}$}, the scalar
	curvature problem in {$\mathbb{R}^N$}, and related topics.
	\newblock {\em J. Funct. Anal.}, 165(1):117--149, June 1999.
	
	\bibitem{AM1999}
	A.~Ambrosetti and A.~Malchiodi.
	\newblock A multiplicity result for the {Yamabe} problem on $\mathbb{S}^n$.
	\newblock {\em J. Funct. Anal.}, 168(2):529--561, 1999.
	
	\bibitem{AM2006}
	A.~Ambrosetti and A.~Malchiodi.
	\newblock {\em Perturbation methods and semilinear elliptic problems on
		$\mathbb{R}^N$}, volume 240 of {\em Progress in {{Mathematics}}}.
	\newblock {Birkh\"auser}, {Basel}, 2006.
	
	\bibitem{BC1990}
	V.~Benci and G.~Cerami.
	\newblock Existence of positive solutions of the equation {$-\Delta{u}+
		a(x)u=u^{\frac{N+2}{N-2}}$ in $\mathbb{R}^N$}.
	\newblock {\em J. Funct. Anal.}, 88(1):90--117, January 1990.
	
	\bibitem{CNY2002}
	D.~Cao, E.~S. Noussair, and S.~Yan.
	\newblock On the scalar curvature equation {$-\Delta u = (1+\epsilon
		K)u^{\frac{N+2}{N-2}}$} in {${\mathbb R}^{N}$}.
	\newblock {\em Calc. Var. Partial Differential Equations}, 15(3):403--419,
	November 2002.
	
	\bibitem{CLO2006}
	W.~Chen, C.~Li, and B.~Ou.
	\newblock Classification of solutions for an integral equation.
	\newblock {\em Comm. Pure Appl. Math.}, 59(3):330--343, 2006.
	
	\bibitem{PW2016}
	Manuel del Pino and Juncheng Wei.
	\newblock An introduction to the finite and infinite dimensional reduction
	methods.
	\newblock In {\em Geometric analysis around scalar curvatures}. World
	Scientific, 2016.
	
	\bibitem{DGY2020}
	Y.~Ding, F.~Gao, and M.~Yang.
	\newblock Semiclassical states for {Choquard} type equations with critical
	growth: critical frequency case.
	\newblock {\em Nonlinearity}, 33(12):6695, October 2020.
	
	\bibitem{DY2019}
	L.~Du and M.~Yang.
	\newblock Uniqueness and nondegeneracy of solutions for a critical nonlocal
	equation.
	\newblock {\em Discrete Contin. Dyn. Syst.}, 39(10):5847--5866, 2019.
	
	\bibitem{DMW2022}
	L.~Duan, M.~Musso, and S.~Wei.
	\newblock Doubling the equatorial for the prescribed scalar curvature problem
	on ${{\mathbb {S}}}^n$.
	\newblock {\em NoDEA Nonlinear Differential Equations Appl.}, 30(3):40, 2023.
	
	\bibitem{Frohlich1954}
	H.~Fr\"ohlich.
	\newblock Electrons in lattice fields.
	\newblock {\em Adv. Phys.}, 3(11):325--361, July 1954.
	
	\bibitem{GDYZ2020}
	F.~Gao, E.~D. da~Silva, M.~Yang, and J.~Zhou.
	\newblock Existence of solutions for critical {Choquard} equations via the
	concentration-compactness method.
	\newblock {\em Proc. Roy. Soc. Edinburgh Sect. A}, 150(2):921–954, 2020.
	
	\bibitem{GLMY2021}
	F.~Gao, H.~Liu, V.~Moroz, and M.~Yang.
	\newblock High energy positive solutions for a coupled {Hartree} system with
	{Hardy-Littlewood-Sobolev} critical exponents.
	\newblock {\em J. Differ. Equations}, 287:329--375, 2021.
	
	\bibitem{GMYZ2022}
	F.~Gao, V.~Moroz, M.~Yang, and S.~Zhao.
	\newblock Construction of infinitely many solutions for a critical {Choquard}
	equation via local {Poho{\v{z}}aev} identities.
	\newblock {\em Calc. Var. Partial Differential Equations}, 61(6):222, 2022.
	
	\bibitem{GY2018}
	F.~Gao and M.~Yang.
	\newblock A strongly indefinite {Choquard} equation with critical exponent due
	to the {Hardy–Littlewood–Sobolev} inequality.
	\newblock {\em Commun. Contemp. Math.}, 20(04):1750037, 2018.
	
	\bibitem{GHPS2019}
	L.~Guo, T.~Hu, S.~Peng, and W.~Shuai.
	\newblock Existence and uniqueness of solutions for {{Choquard}} equation
	involving {{Hardy}}-{{Littlewood}}-{{Sobolev}} critical exponent.
	\newblock {\em Calc. Var. Partial Differential Equations}, 58(4):128, July
	2019.
	
	\bibitem{GMPY2020}
	Y.~Guo, M.~Musso, S.~Peng, and S.~Yan.
	\newblock Non-degeneracy of multi-bubbling solutions for the prescribed scalar
	curvature equations and applications.
	\newblock {\em J. Funct. Anal.}, 279(6):108553, October 2020.
	
	\bibitem{Lei2013}
	Y.~Lei.
	\newblock Qualitative {{analysis}} for the {{static Hartree-type equations}}.
	\newblock {\em SIAM J. Math. Anal.}, 45(1):388--406, January 2013.
	
	\bibitem{Lei2018}
	Y.~Lei.
	\newblock Liouville theorems and classification results for a nonlocal
	{Schr\"odinger} equation.
	\newblock {\em Discrete Contin. Dyn. Syst.}, 38(11):5351--5377, 2018.
	
	\bibitem{TX2022}
	X.~Li, C.~Liu, X.~Tang, and G.~Xu.
	\newblock Nondegeneracy of bubble solutions for critical nonlinear hartree
	equations.
	\newblock {\em arXiv}, 2023.
	\newblock arXiv:2304.04139.
	
	\bibitem{Li1995}
	Y.~Li.
	\newblock Prescribing {{scalar curvature}} on {{$\mathbb{S}^n$}} and {{related
			problems}}, {{part I}}.
	\newblock {\em J. Differ. Equations}, 120(2):319--410, August 1995.
	
	\bibitem{LWX2018}
	Y.~Li, J.~Wei, and H.~Xu.
	\newblock Multi-bump solutions of {$-\Delta u=u^{\frac{n+2}{n-2}}$} on lattices
	in {$\mathbb{R}^n$}.
	\newblock {\em J. für die Reine und Angew. Math.}, 2018(743):163--211, October
	2018.
	
	\bibitem{LiZhu1995}
	Y.~Li and M.~Zhu.
	\newblock Uniqueness theorems through the method of moving spheres.
	\newblock {\em Duke Math. J.}, 80(2):383--417, November 1995.
	
	\bibitem{Lieb1983}
	E.~H. Lieb.
	\newblock Sharp constants in the {Hardy-Littlewood-Sobolev} and related
	inequalities.
	\newblock {\em Ann. Math.}, 118(2):349--374, 1983.
	
	\bibitem{LL2001}
	E.~H. Lieb and M.~Loss.
	\newblock {\em Analysis}.
	\newblock {American Mathematical Soc.}, 2001.
	
	\bibitem{LPZ2023}
	P.~Luo, S.~Peng, and Y.~Zhou.
	\newblock On the prescribed scalar curvature problem with very degenerate
	prescribed functions.
	\newblock {\em Calc. Var. Partial Differential Equations}, 62(3):79, January
	2023.
	
	\bibitem{MV2015}
	V.~Moroz and J.~Van~Schaftingen.
	\newblock Groundstates of nonlinear {{Choquard}} equations:
	{Hardy-Littlewood-Sobolev} critical exponent.
	\newblock {\em Commun. Contemp. Math.}, 17(05):1550005, October 2015.
	
	\bibitem{MV2017}
	V.~Moroz and J.~Van~Schaftingen.
	\newblock A guide to the {{Choquard}} equation.
	\newblock {\em J. Fix. Point Theory A.}, 19(1):773--813, March 2017.
	
	\bibitem{Nirenberg1975}
	L.~Nirenberg.
	\newblock {Monge-Amp\'ere} equations and some associated problems in geometry.
	\newblock In {\em International {{Congress}} of {{Mathematicians}}}, page 275,
	1975.
	
	\bibitem{Pekar2022}
	S.~I. Pekar.
	\newblock {\em {Untersuchungen \"uber die Elektronentheorie der Kristalle}}.
	\newblock {De Gruyter}, September 2022.
	
	\bibitem{Prashanth2007}
	S.~Prashanth.
	\newblock Exact multiplicity result for the perturbed scalar curvature problem
	in $\mathbb{R}^n(n\geq 3)$.
	\newblock {\em Proc. Amer. Math. Soc.}, 135(1):201--209, 2007.
	
	\bibitem{SGY2018}
	Z.~Shen, F.~Gao, and M.~Yang.
	\newblock On critical {Choquard} equation with potential well.
	\newblock {\em Discrete Contin. Dyn. Syst.}, 38(7):3567--3593, 2018.
	
	\bibitem{WeiYan2010}
	J.~Wei and S.~Yan.
	\newblock Infinitely many solutions for the prescribed scalar curvature problem
	on $\mathbb{S}^n$.
	\newblock {\em J. Funct. Anal.}, 258(9):3048--3081, May 2010.
	
	\bibitem{Yan2000}
	S.~Yan.
	\newblock Concentration of solutions for the scalar curvature equation on
	$\mathbb{R}^n$.
	\newblock {\em J. Differential Equations}, 163(2):239--264, May 2000.
	
\end{thebibliography}
\end{document}